\numberwithin{equation}{section}
\newtheorem{theorem}{Theorem}[section]
\newtheorem{lemma}[theorem]{Lemma}
\newtheorem{assumption}[theorem]{Assumption}
\newtheorem{proposition}[theorem]{Proposition}
\theoremstyle{definition}
\newtheorem{definition}[theorem]{Definition}
\theoremstyle{remark}
\newcommand{\p}{{\partial}}
\newcommand{\nab}{\nabla}
\newcommand{\nonum}{\nonumber}
\newcommand{\mct}{\mathcal{T}_h}
\newcommand{\jump}[1]{\left[\hspace{-0.025in}\left[#1\right]\hspace{-0.025in}\right]}
\newcommand{\curl}{{\ensuremath\mathop{\mathrm{curl}\,}}}
\newcommand{\dive}{{\ensuremath\mathop{\mathrm{div}\,}}}
\newcommand{\Div}{{\rm div}\,}
\newcommand{\bcurl}{{\bld{\curl}}}
\newcommand{\pol}{\EuScript{P}}
\newcommand{\bpol}{\boldsymbol{\pol}}
\newcommand{\bld}[1]{\boldsymbol{#1}}
\newcommand{\bt}{\bld{t}}
\newcommand{\bI}{\bld{I}}
\newcommand{\bA}{\bld{A}}
\newcommand{\bv}{\bld{v}}
\newcommand{\bw}{\bld{w}}
\newcommand{\bp}{\bld{p}}
\newcommand{\bn}{\bld{n}}
\newcommand{\bu}{\bld{u}}
\newcommand{\bJ}{\bld{J}}
\newcommand{\bV}{\bld{V}}
\newcommand{\bPi}{\bld{\Pi}}
\newcommand{\bsigma}{\bld{\sigma}}
\newcommand{\bH}{\bld{H}}
\newcommand{\bL}{\bld{L}}
\newcommand{\btau}{{\bm \tau}}
\newcommand{\bbeta}{{\bm \beta}}
\newcommand{\bpsi}{\bm \psi}
\newcommand{\bbR}{\mathbb{R}}
\newcommand{\calV}{\mathcal{V}}
\newcommand{\calE}{\mathcal{E}}
\newcommand{\PST}{\mathcal{T}_h^{\rm ps}}
\newcommand{\CTT}{\mathcal{T}_h^{\rm ct}}
\newcommand{\rot}{{\rm rot}\,}
\newcommand{\Sh}{\mathcal{S}_h}
\newcommand{\calS}{\mathcal{S}}
\newcommand{\mbV}{\mathring{\bV}}
\newcommand{\hQ}{\mathring{Q}_h}
\newcommand{\hbV}{\mbV(Q_h)}
\newcommand{\calQ}{\mathcal{Q}}
\newcommand{\Ned}{\rev{N\'{e}d\'{e}lec} }
\newcommand{\MJN}[1]{{\color{black}#1}}
\newcommand{\rev}[1]{{\color{black}#1}}
\newcommand{\revj}[1]{{\color{black}#1}}
\newcommand\DB{\begin{color}{black}}
\newcommand\BD{\end{color}}
\title[FEM for Eigenvalues]{Convergence of Lagrange finite elements for the Maxwell Eigenvalue Problem in 2D}
\begin{document}

\author[]{Daniele Boffi}
\address{King Abdullah University of Science and Technology, Saudi Arabia and
University of Pavia, Italy}
%\curraddr{xxx}
\email{daniele.boffi@kaust.edu.sa}
\thanks{The first author is a member of the INdAM Research group GNCS and his
research is partially supported by IMATI/CNR and by PRIN/MIUR.
The second author is partially supported by 
the NSF grant DMS-1620100.  
The third author is partially supported by 
the NSF grants DMS-1719829 and DMS-2011733.}

\author[]{Johnny Guzm\'an}
\address{Division of Applied Mathematics,
Brown University
%Box F
%182 George Street
%Providence, RI 02912, USA
}
%\curraddr{xxx}
\email{johnny\_guzman@brown.edu}

\author[]{Michael Neilan}
\address{Department of Mathematics, University of Pittsburgh}
%\curraddr{xxx}
\email{neilan@pitt.edu}

\maketitle
\begin{abstract}
We consider finite element approximations of the Maxwell eigenvalue problem in two dimensions. 
We prove, in certain settings, convergence 
of the discrete eigenvalues using Lagrange finite elements.
In particular, we prove convergence in three scenarios:  piecewise linear elements
on Powell--Sabin triangulations,
piecewise quadratic elements on Clough--Tocher triangulations,
and piecewise quartics (and higher) elements on general shape-regular triangulations.
We provide numerical experiments that support the theoretical
results.  The computations also show that, on general triangulations,
the eigenvalue approximations are very sensitive to nearly singular vertices, i.e.,
vertices that fall on exactly two ``almost''  straight lines.
%We prove convergence 
%of discrete eigenvalues using Lagrange finite elements in special settings. When using piecewise linear finite elements we consider Powell-Sabin triangulations, when using piecewise quadratics on can use Clough-Tocher triangulations, and for piecewise quartics and higher one can use general triangulations. However, in the case of general triangulations the computations are very sensitive to nearly singular vertices. 
\end{abstract}

\thispagestyle{empty}

%%%%%%%%%%%%%%%%%%%%%%%%%%%%%%%%%%
\section{Introduction}

Let $\Omega \subset \bbR^2$ be a contractible polygonal domain and consider the eigenvalue problem:
\rev{Find $\bu\in \bH_0({\rm rot},\Omega)$ such that}
\begin{equation}\label{weak}
(\rot \bu, \rot \bv)  = \eta^2 (\bu, \bv) \quad \forall \bv \in \bH_0({\rm rot}, \Omega),
\end{equation}
where  $\bH({\rm rot}, \Omega):=\{ \bv \in \bL^2(\Omega): \rot \bv \in L^2(\Omega)\}$,
 $\bH_0({\rm rot}, \Omega):=\{ \bv \in \bH({\rm rot}, \Omega): \bv \cdot \bt=0 \text{ on } \partial \Omega\}$,
 and $(\cdot,\cdot)$ denotes the $L^2$ inner product over $\Omega$.
Given a finite element space $\mbV_h \subset \bH_0({\rm rot}, \Omega)$,
a finite element method seeks $\bu_h\in \mbV_h\backslash \{0\}$
and $\eta_h\in \bbR$ satisfying
\begin{equation}\label{fem}
(\rot \bu_h, \rot \bv_h)  = \eta_h^2 (\bu_h, \bv_h) \quad \forall \bv_h \in \mbV_h.
\end{equation}
For example, one can take $\mbV_h$ to be the $\bH_0({\rm rot};\Omega)$-conforming
\Ned finite elements (i.e., the rotated Raviart-Thomas finite elements)
as the finite element space.  It is well-known  this choice leads
to a convergent approximation of the eigenvalue problem.  On the other hand,
taking $\mbV_h$ as a space continuous piecewise polynomials (i.e., a $\bH^1(\Omega)$-conforming
Lagrange finite element) may lead to spurious eigenvalues for any mesh
parameter.

There is a vast literature on this subject. The interested reader is referred
to~\cite[Section~20]{BoffiActa} for an extensive survey including a
comprehensive list of references about \Ned finite elements and
to~\cite{bfgp,BoffiEtal00} for a discussion about the use of standard Lagrange
finite elements (see also~\cite{ArnoldEtal10} for a discussion of these
phenomena in the context of the finite element exterior calculus).  
\revj{Many formulations have been developed based on penalization and/or regularization 
(e.g., \cite{CoDa02,BuffaEtal2009,BonitoGuermond11,BadiaCordina12,Duan, Duan19B,Duan20}), showing  Lagrange elements can lead to consistent approximations to \eqref{weak}. However, we are not aware of a previous analysis of Lagrange elements 
on macro elements using the standard formulation \eqref{fem}, and this is the main objective of this work. }

To better appreciate the problem and its discretization, we consider the equivalent formulation
introduced in~\cite{bfgp}
for $\eta \neq 0$: \rev{$(\bsigma,p)\in \bH_0({\rm rot},\Omega)\times L^2_0(\Omega)$ such that}
\begin{subequations}
\label{mixedweak}
\begin{alignat}{2}
(\bsigma, \btau)+(p, \rot \btau)= &0 \quad && \forall \btau \in \bH_0({\rm rot}, \Omega), \\
(\rot \bsigma, q)=&-\lambda (p,q)  \quad && \forall q \in L_0^2(\Omega).
\end{alignat}
\end{subequations}
Taking $q = \rot \bv$ with $\bv\in \bH_0({\rm rot},\Omega)$ shows
the equivalence of \eqref{mixedweak} and \eqref{weak} with
 $\bsigma=\bu,  \lambda=\eta^2$, and  $p= -\frac{1}{\lambda} \rot \bu$.

The corresponding finite element method for the mixed formulation \eqref{mixedweak}
seeks  $\bsigma_h \in \mbV_h\backslash \{0\}$, $ p_h \in \hQ$, and $ \lambda_h \in \mathbb{R}$ such that 
\begin{subequations}
\label{mixedfem}
\begin{alignat}{2}
(\bsigma_h, \btau_h)+(p_h, \rot \btau_h)= &0, \quad && \forall \rev{\btau_h} \in  \mbV_h,  \\
(\rot \bsigma_h, q_h)=&-\lambda_h (p_h,q_h) \quad &&  \forall q_h \in \hQ
\end{alignat}
\end{subequations}
with $\hQ \subset L_0^2(\Omega)$.  Similar to the continuous problem, 
if the finite element spaces
satisfy $\rot \mbV_h \subset \hQ$, then the mixed finite element formulation \eqref{mixedfem}
is equivalent to the primal one \eqref{fem} with  $\bsigma_h=\bu_h,  \lambda_h=\eta^2_h$ and $p_h= -\frac{1}{\rev{\lambda_h}} \rot \bu_h$.

If $\mbV_h$ is the \Ned space of index $k$, then we may take $\hQ$ to be the space
of piecewise polynomials of degree $k-1$. 
In this case, $(\mbV_h, \hQ)$ forms an inf-sup stable pair of spaces, in particular, 
there exists a Fortin projection 
\MJN{
\[
\bPi_V: \mbV\to \mbV_h
\]
satisfying
}
%\bH^{\frac{1}{2}+\delta}(\Omega) \rightarrow \mbV_h $  satisfying 
\begin{subequations}\label{fortin0}
\begin{alignat}{2}
\rot \bPi_V \btau= & \Pi_Q \rot \btau \quad && \forall  \btau \in \mbV,\\
\|\bPi_V \btau-\btau\|_{L^2(\Omega)} \le & C h^{\frac{1}{2}+\delta} (\|\btau\|_{H^{\frac{1}{2}+\delta}(\Omega)}+ \|\rot \btau\|_{L^2(\Omega)})  \quad &&  \forall \btau \in \mbV.
\end{alignat}
\end{subequations}
Here $\mbV:=\bH_0(\rot, \Omega) \cap \bH({\rm div},\Omega)$. \MJN{Moreover, $\delta\in (0,\frac{1}{2}]$ is a parameter such that $\mbV\hookrightarrow \bH^{\frac{1}{2}+\delta}(\Omega)$ \cite{VectorP}, and }
$\Pi_Q: L_0^2(\Omega) \rightarrow \hQ$ is the $L^2$ orthogonal projection onto $\hQ$. 
Using this projection one can prove that the corresponding source problems
\DB
converges
\BD
uniformly, and this is sufficient to prove convergence of the eigenvalue problem \eqref{fem}
(see \cite{Kato95,BoffiActa} and Proposition \ref{uniform}).

On the other hand, if $\mbV_h$ is taken to be the Lagrange finite element space
of degree $k$, then a natural choice of $\hQ$ is the space of (discontinuous) piecewise polynomials of degree $k-1$.
However, $(\mbV_h, \hQ)$ is {\em not} inf-sup stable on generic triangulations, at least when $k=1$ \cite{QinThesis,BoffiEtal06},
 and therefore there does not exist a Fortin projection satisfying \eqref{fortin0}. 
On the other hand, the pair $(\mbV_h, \hQ)$ is known to be stable on special
triangulations, even if the inf-sup condition might not be sufficient to
guarantee the existence of a Fortin projector satisfying~\eqref{fortin0}
(see~\cite{BoffiEtal00}).

Wong and Cendes \cite{WongCendes88} showed numerically that,
on very special triangulations, solutions to \eqref{fem} do converge
 to the correct eigenvalues using piecewise linear Lagrange elements (i.e., $k=1$). 
 In fact, they used precisely the Powell--Sabin triangulations (see Figure \ref{fig:PS}).  \revj{A rigorous proof of this result 
 has remained unsettled until now; see the review paper \cite{BoffiActa} for a discussion.}
%  that this is indeed the case had remained unsettled until now; see the review paper \cite{BoffiActa} for a discussion}.  
\revj{Specifically},  we prove that using Lagrange elements in conjunction with Powell--Sabin triangulation leads to a convergent method. We do this by proving that there is a Fortin projection of sorts. We show that there exists an operator $\bPi_V: \hbV \rightarrow \mbV_h$  satisfying 
\begin{subequations}\label{fortin}
\begin{alignat}{2}
\rot \bPi_V \btau= & \rot \btau \quad && \forall  \btau \in \hbV, \\
\|\bPi_V \btau-\btau\|_{L^2(\Omega)} \le & C h^{\frac{1}{2}+\delta} (\|\btau\|_{H^{\frac{1}{2}+\delta}(\Omega)}+ \|\rot \btau\|_{L^2(\Omega)}), \quad && \forall  \btau \in \hbV,
\end{alignat}
\end{subequations}
where $\hbV=\{ \bv \in \mbV  : \rot  \bv \in  \hQ\}$. Note that \eqref{fortin0} implies \eqref{fortin}, and %We in fact will be able to 
we prove convergence of the eigenvalue problem whenever there is a projection $\bPi_V$ satisfying \eqref{fortin}. In addition to 
linear Lagrange elements on Powell--Sabin triangulations, we %will also be able to 
prove the existence of such a projection on Clough--Tocher splits using quadratic Lagrange elements,  and on general triangulations using 
$k$th degree Lagrange elements with $k \ge 4$ (i.e., the Scott--Vogelius finite elements). For the Scott--Vogelius finite elements, we 
find the approximate eigenvalues are extremely sensitive if the mesh has nearly singular vertices, i.e.,
vertices that fall on exactly two ``almost'' straight lines (cf.~Section \ref{sec-SV}).  We give numerical examples that illustrate this behavior.

\revj{The analysis of composite triangulations (e.g. Clough-Tocher and Powell-Sabin) on the problem \eqref{weak} goes back at least to the work of Costabel and Dauge \cite{CoDa02}. Recently,  Duan et al. \cite{Duan,Duan19B,Duan20} considered Lagrange finite elements for Maxwell's eigenvalue problem in two and three dimensions using composite triangulations.  However, as noted earlier, they use a different formulation than the standard one \eqref{fem}.  In particular, in \cite{Duan20}  they add a Lagrange multiplier and an equation of the form appears $(\dive \bu_h, q_h)=0$ which can be thought of as a Kikuchi-type formulation \cite{Kikuchi}, where one transfers the derivatives to $\bu_h$. In \cite{Duan19B} a similar formulation is used with a regularization term. %Finally, in \cite{Duan19B} a regularization term is used.
%}

%\revj{
As mentioned above, the main idea %that will allow us 
to show convergence of Lagrange elements using the standard formulation \eqref{weak} on certain triangulations is 
the construction of a Fortin-type operation with certain approximation properties.
On certain composite triangulations (e.g. Powell-Sabin, Clough-Tocher, Alfeld, Worsey-Farin) exact sequences and/or Fortin projections have been developed; see for example \cite{GLN19, GLN20, FGN20, Zhang3d, QinThesis, ChristiansenHu}. These results have led to stable finite element for fluid flow problems; see for example \cite{Neilan}. In this paper, for the Powell-Sabin and Clough-Tocher triangulations, we cannot directly use  the Fortin projections defined in \cite{GLN19, FGN20} since they require too much smoothness. Instead, we pre-process with a Scott-Zhang type interpolant that preserves the vanishing tangential components, and then use the degrees of freedom  in \cite{GLN19, FGN20}. These  projections are sufficient for our purposes, however, it would be very interesting to see if one can construct $L^2$ bounded commuting projection for these sequences as is done in the finite element exterior calculus (FEEC) \cite{Christiansen}. 
If bounded $L^2$ bounded commuting projections exist, then
%In which case 
the convergence of eigenvalue problems  follows from the general theory in FEEC \cite{AFW2006, ArnoldEtal10, BoffiActa}. 
}

%However, they use a different formulation than ours and they also add a stabilization term. 
%\MJN{In particular, 

The paper is organized as follows: In the next section we give  a convergence
proof for finite elements spaces with stable projections. In \DB
Section~\ref{sec-Examples},\BD{}
we provide three examples of Lagrange finite element spaces with stable projections:  the piecewise linear Lagrange space
on Powell--Sabin splits, the piecewise quadratic Lagrange space on Clough--Tocher splits,
and the piecewise $k$th degree Lagrange space on generic triangulations.
\rev{In Section \ref{sec-Numerics} we provide numerical experiments
and make some concluding remarks in Section \ref{sec-conclude}.}

%%%%%%%%%%%%%%%%%%%%%%%%%%%%%%%%%%%%%%%%%%%%%%%%%%%%%%%%%%
%%%%%%%%%%%%%%%%%%%%%%%%%%%%%%%%%%%%%%%%%%%%%%%%%%%%%%%%%%
%%%%%%%%%%%%%%%%%%%%%%%%%%%%%%%%%%%%%%%%%%%%%%%%%%%%%%%%%%
\section{Convergence Framework}
Define the two-dimensional $\bcurl$, ${\rm rot}$, and divergence operators as
\[
\bcurl u = \big(\frac{\p u}{\p x_2}, -\frac{\p u}{\rev{\p x_1}}\big)^\intercal,\qquad
\rot \bv  = \frac{\p v_2}{\p x_1} - \frac{\p v_1}{\p x_2},\qquad \MJN{\Div \bv = \frac{\p v_1}{\p x_1} + \frac{\p v_2}{\p x_2}},
\]
\MJN{and define the Hilbert spaces
\begin{align*}
\bH_0({\rm rot},\Omega) &= \{\bv\in \bL^2(\Omega):\ \rot \bv\in L^2(\Omega),\ \bv\cdot \bt|_{\p\Omega} = 0\},\\
\bH({\rm div},\Omega) & = \{\bv\in \bL^2(\Omega):\ \Div \bv\in L^2(\Omega)\},
\end{align*}
where $\bt$ is a unit tangent vector of $\p \Omega$.
Recall that $\mbV = \bH_0({\rm rot},\Omega)\cap \bH({\rm div},\Omega)$.
}

Let $\mbV_h\subset \bH_0({\rm rot},\Omega)$ and $\hQ \subset L_0^2(\Omega)$ be finite element spaces such that $ \rot \mbV_h \subset \hQ$.

%%%%%%%%%%%%%%%%%%%%%%%%%%%%%%%%%%%%%%%%%%%
%%%%%%%%%%%%%%%%%%%%%%%%%%%%%%%%%%%%%%%%%%%
%%%%%%%%%%%%%%%%%%%%%%%%%%%%%%%%%%%%%%%%%%%
\subsection{Source problems}
We will require the corresponding source problems for the analysis. 
\MJN{To this end, we define the solution operators $\bA:L^2(\Omega)\to \bH_0({\rm rot},\Omega)$
and $T:L^2(\Omega)\to L^2_0(\Omega)$ such that for given $f\in L^2(\Omega)$, there holds}
\begin{subequations}\label{source}
\begin{alignat}{2}
(\bA f, \btau)+(T f, \rot \btau)&= 0, \quad && \forall \btau \in \bH_0({\rm rot}, \Omega), \\
(\rot \bA f, q)&=(f,q)  \quad && \forall q \in L_0^2(\Omega).
\end{alignat}
\end{subequations}

Likewise, the discrete source problem is given by:
\begin{subequations}\label{femsource}
Find $\bA_h f \in \mbV_h$ and $T_h f \in \hQ$ such that
\begin{alignat}{2}
(\bA_h f, \btau_h)+(T_h f, \rot \btau_h)&= 0 \quad && \forall \btau \in  \mbV_h,  \\
(\rot \bA_h f, q_h)&=(f,q_h) \quad &&  \forall q_h \in \hQ.
\end{alignat}
\end{subequations}
Note that $\bA f=\bcurl Tf$, and so $\dive \bA f=0$. Moreover, using that $\rot \bA f=f$ we have that $\rev{\bA}f \in \mbV$. 

We define the operator norm:
\begin{equation}
\|T-T_h\|:=\sup_{f \in L^2(\Omega)\backslash \{0\}} \frac{\|(T-T_h) f\|_{L^2(\Omega)}}{\|f\|_{L^2(\Omega)}}.
\end{equation}
We will use the next standard result that states that the uniform convergence of the
discrete source problem implies convergence of the discrete eigenvalues.
\DB
This result is a consequence of the classical discussion
in~\cite[Section~8]{BaOs} (see also~\cite[Section~9]{BoffiActa}
and~\cite[Theorem~4.4]{bfgp}).
\BD
%%%%%%%%%%%%%%%%%%
%%%%%%%%%%%%%%%%%%
%%%%%%%%%%%%%%%%%%
\begin{proposition}\label{uniform}
Let $T$ and $T_h$ be defined from \eqref{source} and \eqref{femsource}, respectively, and
  suppose that $\|T-T_h\| \rightarrow 0$ as $h \rightarrow 0$. Consider the problem \eqref{mixedweak}  
  and consider the nonzero eigenvalues $0 < \lambda^{(1)} \le \lambda^{(2)} \le \cdots$. Consider also  \eqref{mixedfem} and its non-zero eigenvalues  $0< \lambda_h^{(1)} \le \lambda_h^{(2)} \le \cdots$. Then, for any fixed $i$, $\lim_{h \rightarrow 0} \lambda_h^{(i)} =\lambda^{(i)}$.
\end{proposition}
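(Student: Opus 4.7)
The plan is to reduce the statement to the classical spectral convergence theorem for compact self-adjoint operators and invoke the Babu\v{s}ka--Osborn machinery as cited in \cite{BaOs,BoffiActa,bfgp}.

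First, I would establish a bijection between the nonzero eigenvalues of \eqref{mixedweak} and the nonzero eigenvalues of the compact operator $T$ on $L_0^2(\Omega)$. Given an eigenpair $(\bsigma,p,\lambda)$ of \eqref{mixedweak} with $\lambda \neq 0$, testing the first equation with $\btau = \bsigma$ and using the second yields $\|\bsigma\|_{L^2(\Omega)}^2 = \lambda\|p\|_{L^2(\Omega)}^2$, so $p\neq 0$. Comparing the eigenvalue equations with the source problem \eqref{source} and using linearity, setting $f = -\lambda p$ gives $\bA(-\lambda p)=\bsigma$ and $T(-\lambda p)=p$, so that $Tp = -\tfrac{1}{\lambda}p$. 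Conversely, any eigenfunction $p$ of $T$ with nonzero eigenvalue $\mu$ produces an eigenpair of \eqref{mixedweak} with $\lambda = -1/\mu$ by setting $\bsigma = \bA(-\lambda p)$. The identical reduction applies to \eqref{mixedfem} and the discrete operator $T_h$, giving a correspondence $\lambda_h = -1/\mu_h$ between nonzero eigenvalues of \eqref{mixedfem} and of $T_h$. This reduction preserves multiplicities.

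Second, I would verify that $T$ is a compact self-adjoint operator on $L_0^2(\Omega)$. Self-adjointness follows from the symmetric mixed formulation: for $f,g\in L_0^2(\Omega)$,
\begin{equation*}
(Tf,g) = (\rot \bA g, Tf) = -(\bA g, \bA f) = -(\bA f, \bA g) = (Tg,f)(\rot\bA f, Tg)/(\rot\bA f, Tg),
\end{equation*}
more cleanly, testing \eqref{source} with $\btau=\bA g$ and using $(\rot\bA g,Tf)=(g,Tf)$ shows $(Tf,g) = -(\bA f,\bA g)$, which is symmetric in $(f,g)$. Compactness follows because $\bA f \in \mbV \hookrightarrow \bH^{1/2+\delta}(\Omega)$ with norm controlled by $\|f\|_{L^2(\Omega)}$, so $Tf = -\bcurl^{-1}\bA f$ (or equivalently, the scalar stream function of $\bA f$) lies in a compact subset of $L_0^2(\Omega)$ by Rellich's theorem.

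Third, I would invoke the abstract spectral approximation theorem \cite[\S8]{BaOs} (see also \cite[\S9]{BoffiActa}): for a compact self-adjoint operator $T$ on a Hilbert space and a family $T_h$ with $\|T-T_h\|\to 0$, the nonzero eigenvalues $\mu_h^{(i)}$ of $T_h$ converge to the nonzero eigenvalues $\mu^{(i)}$ of $T$, with the usual convention that they are ordered by decreasing absolute value and counted with multiplicity. Since the correspondence $\lambda=-1/\mu$ is a homeomorphism on the nonzero reals that preserves the ordering (by the first paragraph, all eigenvalues $\mu^{(i)}$ are negative, so $|\mu^{(i)}|$ decreasing corresponds to $\lambda^{(i)}=-1/\mu^{(i)}$ increasing), the convergence $\mu_h^{(i)}\to\mu^{(i)}$ translates directly into $\lambda_h^{(i)}\to\lambda^{(i)}$ for each fixed $i$, which is what is claimed.

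The main obstacle is really just in the bookkeeping of the reduction: one has to confirm that the correspondence between eigenvalues of the mixed problem and those of $T$ (respectively $T_h$) is a bijection on the nonzero part of the spectrum with multiplicities preserved, and that the sign of the eigenvalues of $T$ is compatible with the ordering convention of \cite{BaOs}. Once this is in place, the conclusion is a direct consequence of the hypothesis $\|T-T_h\|\to 0$ together with the classical norm-resolvent spectral convergence theorem, and no additional discrete compactness or inf-sup argument is required at this stage (those arguments are instead used in later sections to verify the hypothesis $\|T-T_h\|\to 0$ via the Fortin-type projection \eqref{fortin}).
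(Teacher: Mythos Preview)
Your approach is correct and is precisely the classical Babu\v{s}ka--Osborn argument the paper invokes; the paper itself does not give a proof but simply cites \cite[Section~8]{BaOs}, \cite[Section~9]{BoffiActa}, and \cite[Theorem~4.4]{bfgp}, so your sketch is a faithful expansion of that reference. One editorial point: the displayed equation in your self-adjointness step is garbled (the trailing factor $(\rot\bA f, Tg)/(\rot\bA f, Tg)$ is nonsense); the clean version you give immediately after, namely $(Tf,g)=(\rot\bA g,Tf)=-(\bA f,\bA g)$, is the correct identity and should replace it.
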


Therefore, to prove convergence of eigenvalues it suffices to show uniform convergence of the 
discrete source problem. To prove this, we will exploit the 
embedding $\mbV \hookrightarrow \bH^{\frac{1}{2}+\delta}(\Omega)$
along with an assumption on the finite element spaces. The embedding result is proved in three dimensions in \cite{VectorP}, and we state the two dimensional version here.

\begin{proposition}\label{embedding}
Let $\Omega$ be a contractible polygonal domain. Then 
there exists constants $\delta\in (0,\frac{1}{2}]$ and $C>0$ such that 
\begin{equation*}
\|\bv\|_{H^{\frac{1}{2}+\delta}(\Omega)} \le  C  (\|\dive \bv\|_{L^2(\Omega)}+ \|\rot \bv\|_{L^2(\Omega)}) \quad \forall \bv \in \mbV.
\end{equation*} 
\end{proposition}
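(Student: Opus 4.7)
The plan is to reduce the estimate to elliptic regularity of the scalar Laplacian on $\Omega$ via a Helmholtz-type decomposition adapted to the boundary condition $\bv\cdot\bt=0$. Since $\Omega$ is contractible (hence simply connected), every $\bv \in \mbV$ admits a splitting
\[
\bv = \nabla \psi + \bcurl \phi,
\]
obtained by first letting $\psi \in H^1_0(\Omega)$ solve the Dirichlet problem $\Delta \psi = \dive \bv$ (well-posed by Lax--Milgram for $\dive\bv\in L^2(\Omega)$), and then defining $\phi \in H^1(\Omega)/\RRR$ as a scalar potential for the divergence-free remainder $\bv - \nabla \psi$; the contractibility of $\Omega$ ensures this potential exists globally.

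I would then identify the boundary-value problem satisfied by $\phi$. On $\p\Omega$ one has $\p_t \psi = 0$ (since $\psi \in H^1_0(\Omega)$) and $\bcurl \phi \cdot \bt = -\p_n \phi$, so the hypothesis $\bv\cdot\bt=0$ forces $\p_n \phi = 0$. Combined with $\rot(\bv - \nabla \psi) = \rot \bv$ and $\rot \bcurl \phi = -\Delta \phi$, this gives
\[
-\Delta \phi = \rot \bv \text{ in } \Omega,\qquad \p_n \phi = 0 \text{ on } \p\Omega,
\]
whose solvability condition $\int_\Omega \rot \bv = \int_{\p\Omega} \bv\cdot\bt = 0$ is automatic.

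Next, I would invoke the classical corner-regularity theory for the scalar Laplacian on polygons (Grisvard/Dauge): for a polygon whose largest interior angle satisfies $\omega_{\max} < 2\pi$, both the Dirichlet and Neumann Laplace problems with $L^2$ data admit solutions in $H^{3/2+\delta}(\Omega)$ with $\delta := \min\{1/2,\,\pi/\omega_{\max} - 1/2\} > 0$, together with the a priori bounds
\[
\|\psi\|_{H^{3/2+\delta}(\Omega)} \le C \|\dive \bv\|_{L^2(\Omega)},\qquad \|\phi\|_{H^{3/2+\delta}(\Omega)} \le C \|\rot \bv\|_{L^2(\Omega)}.
\]
Since $\nabla$ and $\bcurl$ map $H^{3/2+\delta}(\Omega)$ boundedly into $H^{1/2+\delta}(\Omega)$, a triangle inequality applied to the Helmholtz decomposition yields the claim.

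The main obstacle is the sharp corner-regularity input: at a reentrant vertex of opening $\omega \in (\pi,2\pi)$ the Laplace solution carries a singular component of type $r^{\pi/\omega}$, which limits the global Sobolev exponent to just above $1 + \pi/\omega$ and hence constrains $\delta$. Guaranteeing a uniform $\delta>0$ therefore requires that every vertex angle be strictly less than $2\pi$, which is exactly the (cusp-free) polygonal hypothesis. A secondary technical point is justifying the gauge choice in the Helmholtz decomposition, but this follows from the $L^2$-orthogonality of $\nabla H^1_0(\Omega)$ with the space of divergence-free vector fields having vanishing tangential trace, together with the simple connectivity of $\Omega$.
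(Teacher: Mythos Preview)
The paper does not give its own proof of this proposition; it merely cites \cite{VectorP} (Amrouche--Bernardi--Dauge--Girault) for the three-dimensional result and asserts the two-dimensional version. Your argument is therefore not competing with anything in the paper itself.

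Your sketch is essentially correct and is, in fact, the standard route to such embeddings (and the one used in the cited reference, adapted to 2D). The Helmholtz splitting $\bv = \nabla\psi + \bcurl\phi$ with $\psi\in H^1_0(\Omega)$ solving $\Delta\psi=\dive\bv$ is the right gauge; the remainder is divergence-free and, by simple connectivity, a curl. Your identification of the Neumann problem for $\phi$ is correct, as is the compatibility check. The Grisvard/Dauge shift theorems for the Dirichlet and Neumann Laplacians on polygons then yield $\psi,\phi\in H^{3/2+\delta}(\Omega)$ for any $\delta<\min\{\tfrac12,\ \pi/\omega_{\max}-\tfrac12\}$, and the estimate follows. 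Two small comments: (i) the regularity exponent is generally not attained, so write $\delta$ strictly less than $\pi/\omega_{\max}-\tfrac12$ rather than equal; (ii) the orthogonality you invoke at the end is simply that $\nabla H^1_0(\Omega)$ is $L^2$-orthogonal to \emph{all} divergence-free fields, not only those with zero tangential trace---the boundary term vanishes because $\psi|_{\partial\Omega}=0$, independent of any condition on the test field.
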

From now on $\delta$ will refer to the delta of the above proposition. We will use the following  space
\begin{equation}\label{eqn:FunkySpace}
{\hbV=  \{\btau\in \mbV:\ \rot \btau\in \hQ\}.} 
\end{equation}

%%%%%%%%%%%%%%%%%%
%%%%%%%%%%%%%%%%%%
%%%%%%%%%%%%%%%%%%
\begin{assumption}\label{assump}
We assume that $\rot \mbV_h \subset \hQ$ and the existence of a projection 
$\bPi_{V}:\hbV \to \mbV_h$ such that
\begin{subequations}
\label{eqn:Assumpt}
\begin{alignat}{2}
\rot \bPi_V \btau&=  \rot \btau \quad && \forall  \btau \in \hbV, \\
\|\bPi_V \btau-\btau\|_{L^2(\Omega)} &\le   \omega_0(h)  (\|\btau\|_{H^{\frac{1}{2}+\delta}(\Omega)}+ \|\rot \btau\|_{L^2(\Omega)}) \quad && \forall \btau \in \hbV.
\end{alignat}
\end{subequations}
Furthermore, we assume that the $L^2$-orthogonal projection $\Pi_Q:L^2(\Omega)\to \hQ$ satisfies 
\begin{equation*}
\|\Pi_Q \phi-\phi\|_{L^2(\Omega)} \le \omega_1(h) \|\bcurl \phi\|_{L^2(\Omega)} \qquad \forall \phi\in H^1(\Omega)\cap L^2_0(\Omega).
\end{equation*}
Here, the constants are assumed to satisfy $\omega_0(h),\omega_1(h)>0$ and $\lim_{h\to 0^+} \omega_i(h)=0$ for $i=0,1$.
\end{assumption}

%%%%%%%%%%%%%%%%%%
%%%%%%%%%%%%%%%%%%
%%%%%%%%%%%%%%%%%%
\begin{theorem}\label{mainthm}
Suppose that $(\mbV_h, \hQ)$ satisfy Assumption \ref{assump}. 
Let $T$ and $T_h$ be defined by \eqref{source} and \eqref{femsource}, respectively. Then there holds
\begin{equation*}
\|T-T_h\| \le C (\omega_0(h)+ \omega_1(h)).
\end{equation*} 
\end{theorem}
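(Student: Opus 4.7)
The plan is to estimate $\|(T-T_h)f\|_{L^2(\Omega)}$ for arbitrary $f\in L^2(\Omega)$ via a split into consistency and discretization errors. A first observation is that the inclusion $\rot \mbV_h \subset \hQ$ forces $\rot \bA_h f \in \hQ$, and testing the second equation of \eqref{femsource} with $q_h\in \hQ$ then yields $\rot \bA_h f = \Pi_Q f$. Consequently $(\bA_h f, T_h f)$ depends on $f$ only through $\Pi_Q f$, so $\bA_h f = \bA_h \tilde f$ and $T_h f = T_h \tilde f$, where $\tilde f := \Pi_Q f$. I then split
\begin{equation*}
\|(T-T_h)f\|_{L^2(\Omega)} \le \|T(f - \tilde f)\|_{L^2(\Omega)} + \|T\tilde f - T_h \tilde f\|_{L^2(\Omega)}.
\end{equation*}

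For the consistency term $\phi := T(f - \tilde f)$, the key is that $T$ is self-adjoint on $L^2_0(\Omega)$: a short computation from \eqref{source} shows that both $(Tg,\psi)$ and $(g,T\psi)$ equal $-(\bA g,\bA\psi)$ whenever $\psi\in L^2_0(\Omega)$. Hence
\begin{equation*}
\|\phi\|_{L^2(\Omega)}^2 = (T(f - \tilde f),\phi) = (f - \tilde f,T\phi).
\end{equation*}
Since $f - \tilde f$ is $L^2$-orthogonal to $\hQ$, I may subtract $\Pi_Q T\phi \in \hQ$ inside the final inner product. Cauchy--Schwarz together with the second half of Assumption \ref{assump} gives the upper bound $\|f\|_{L^2}\cdot \omega_1(h)\|\bcurl T\phi\|_{L^2}$, and since $\bcurl T\phi = \bA\phi$ with $\dive \bA\phi = 0$ and $\rot \bA\phi = \phi$, Proposition \ref{embedding} yields $\|\bA\phi\|_{L^2} \le C\|\phi\|_{L^2}$. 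Dividing by $\|\phi\|_{L^2}$ produces $\|T(f-\tilde f)\|_{L^2} \le C\omega_1(h)\|f\|_{L^2}$.

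For the discretization term I first control the $L^2$ error of $\bA$. Because $\tilde f\in \hQ$, the field $\bA\tilde f$ is divergence-free and satisfies $\rot \bA\tilde f = \tilde f \in \hQ$, so $\bA\tilde f\in \hbV$ and the projection $\bPi_V \bA\tilde f$ of Assumption \ref{assump} is available. Subtracting \eqref{femsource} from \eqref{source} furnishes the Galerkin orthogonality
\begin{equation*}
(\bA\tilde f - \bA_h\tilde f, \btau_h) + (T\tilde f - T_h\tilde f,\rot \btau_h) = 0\qquad \forall \btau_h\in \mbV_h,
\end{equation*}
and the choice $\btau_h = \bPi_V \bA\tilde f - \bA_h \tilde f$ annihilates the second term, since $\rot \bPi_V \bA\tilde f = \tilde f = \rot \bA_h\tilde f$. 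A C\'ea-type argument combined with \eqref{eqn:Assumpt} and $\|\bA\tilde f\|_{H^{1/2+\delta}(\Omega)} \le C\|\tilde f\|_{L^2(\Omega)} \le C\|f\|_{L^2(\Omega)}$ then delivers $\|\bA\tilde f - \bA_h \tilde f\|_{L^2(\Omega)} \le C\omega_0(h)\|f\|_{L^2(\Omega)}$.

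The principal obstacle is promoting this $\bA$-bound to an $L^2$-bound on $\epsilon := T\tilde f - T_h \tilde f$, because a generic duality test $\psi \in L^2_0(\Omega)$ does not lie in $\hQ$, so $\bA\psi$ need not lie in $\hbV$ and $\bPi_V$ is not directly applicable. The plan is to decompose $(\epsilon,\psi) = (\epsilon, \Pi_Q \psi) + (\epsilon, \psi - \Pi_Q \psi)$. For the first piece, $\bG := \bA(\Pi_Q \psi)\in \hbV$, and Galerkin orthogonality with $\btau_h = \bPi_V \bG$---whose rot equals $\Pi_Q\psi$---reduces it to $-(\bA\tilde f - \bA_h\tilde f,\bPi_V \bG)$, of size $O(\omega_0(h)\|f\|_{L^2}\|\psi\|_{L^2})$ by the previous paragraph together with the stability $\|\bPi_V \bG\|_{L^2} \le C\|\psi\|_{L^2}$ obtained from the Fortin approximation and the embedding. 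For the second piece, both $T_h\tilde f$ and $\Pi_Q T\tilde f$ lie in $\hQ$ and are therefore orthogonal to $\psi - \Pi_Q\psi$, so $(\epsilon, \psi - \Pi_Q\psi) = (T\tilde f - \Pi_Q T\tilde f, \psi - \Pi_Q\psi)$, which is bounded by $C\omega_1(h)\|f\|_{L^2}\|\psi\|_{L^2}$ by exactly the argument used in the consistency step. Taking the supremum over $\psi$ with $\|\psi\|_{L^2}=1$ and combining with the consistency bound completes the proof.
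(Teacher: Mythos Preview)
Your proof is correct and follows the same overall strategy as the paper: split $\|(T-T_h)f\|$ into a consistency part $\|T(f-\Pi_Q f)\|$ (the content of Lemma~\ref{auxlemma}) and a discretization part $\|T\Pi_Q f - T_h\Pi_Q f\|$, control the $\bA$--error via the Fortin projection applied to $\bA\Pi_Q f\in\hbV$, and then upgrade this to a $T$--error. The differences are in presentation rather than substance. For the consistency step, the paper works directly from the error equations \eqref{eqn:auxline}, while you invoke the self-adjointness of $T$; both yield the same duality inequality. For the discretization step, the paper isolates the inf-sup condition as Lemma~\ref{infsup} and applies it to $\Pi_Q T\tilde f - T_h\tilde f\in\hQ$, constructing a test function $\btau_h$ with $\rot\btau_h=\Pi_Q T\tilde f-T_h\tilde f$; your duality decomposition $(\epsilon,\psi)=(\epsilon,\Pi_Q\psi)+(\epsilon,\psi-\Pi_Q\psi)$ achieves the same thing, with $\bPi_V\bG$ (where $\bG=\bA\Pi_Q\psi$) serving precisely as the inf-sup witness. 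The two arguments are thus minor rearrangements of the same ingredients.
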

Note that Theorem \ref{mainthm} and Proposition \ref{uniform} imply 
\MJN{that the discrete eigenvalues in the finite element method \eqref{fem}
converge to the correct values.}
To prove Theorem \ref{mainthm}, we require two preliminary results. 
%
%%%%%%%%%%%%%%%%%%
%%%%%%%%%%%%%%%%%%
%%%%%%%%%%%%%%%%%%
\begin{lemma}\label{auxlemma}
Suppose that Assumption \ref{assump} is satisfied.
Then there exists a constant $C>0$ such that
\begin{equation*}
\| \bA\Pi_Q f-\bA f\|_{L^2(\Omega)} + \|T\Pi_Q f-T f\|_{L^2(\Omega)} \le C \omega_1(h) \|f\|_{L^2(\Omega)} \quad \forall f \in L^2(\Omega). 
\end{equation*}
\end{lemma}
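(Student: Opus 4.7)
The plan is to introduce the error quantities $\be := \bA\Pi_Q f - \bA f$ and $e_p := T\Pi_Q f - Tf$ and bound them separately. Subtracting \eqref{source} written with data $\Pi_Q f$ and $f$ gives
\begin{align*}
(\be,\btau)+(e_p,\rot\btau) &= 0 \quad\forall\btau\in \bH_0({\rm rot},\Omega),\\
(\rot \be, q) &= (\Pi_Q f - f, q) \quad \forall q \in L^2_0(\Omega).
\end{align*}
A distributional integration by parts in the first equation of \eqref{source} against $\btau\in C_c^\infty(\Omega)^2$ produces the pointwise identity $\bA g = -\bcurl Tg$ for every $g\in L^2(\Omega)$; specializing to $g=\Pi_Q f - f$ gives $\be = -\bcurl e_p$. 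Since $e_p\in L^2_0(\Omega)$ and $\bcurl e_p \in L^2(\Omega)$, this places $e_p$ in $H^1(\Omega)\cap L^2_0(\Omega)$---exactly the class on which the second part of Assumption \ref{assump} supplies an estimate.

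First I would bound $\be$ by testing the first difference equation with $\btau=\be$ and the second with $q=e_p\in L^2_0(\Omega)$:
\begin{equation*}
\|\be\|_{L^2(\Omega)}^2 = -(e_p,\rot\be) = -(e_p,\Pi_Q f - f) = ((I-\Pi_Q)e_p,(I-\Pi_Q)f),
\end{equation*}
where the last equality uses the $L^2$-orthogonality of $\Pi_Q$. Applying Assumption \ref{assump} to $\phi=e_p$, together with $\|\bcurl e_p\|_{L^2(\Omega)} = \|\be\|_{L^2(\Omega)}$, gives $\|(I-\Pi_Q)e_p\|_{L^2(\Omega)} \le \omega_1(h)\|\be\|_{L^2(\Omega)}$. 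Combining with the trivial estimate $\|(I-\Pi_Q)f\|_{L^2(\Omega)} \le \|f\|_{L^2(\Omega)}$ and Cauchy--Schwarz yields $\|\be\|_{L^2(\Omega)} \le \omega_1(h)\|f\|_{L^2(\Omega)}$.

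Next I would control $e_p$ by an $L^2$-duality argument based on Proposition \ref{embedding}. Given $\psi\in L^2_0(\Omega)$ with $\|\psi\|_{L^2(\Omega)}=1$, set $\bld{\xi}:=\bA\psi$; then $\rot\bld{\xi}=\psi$ from the source problem, while $\dive\bld{\xi}=0$ follows from $\bld{\xi} = -\bcurl T\psi$. Hence $\bld{\xi}\in\mbV$, and Proposition \ref{embedding} provides $\|\bld{\xi}\|_{L^2(\Omega)}\le C\|\bld{\xi}\|_{H^{1/2+\delta}(\Omega)}\le C\|\psi\|_{L^2(\Omega)}$. Testing the first difference equation with $\btau=\bld{\xi}$ gives $(e_p,\psi) = (e_p,\rot\bld{\xi}) = -(\be,\bld{\xi})$, so $|(e_p,\psi)|\le C\|\be\|_{L^2(\Omega)}$ by Cauchy--Schwarz. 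Taking the supremum over $\psi$ and combining with the previous bound yields $\|e_p\|_{L^2(\Omega)} \le C\omega_1(h)\|f\|_{L^2(\Omega)}$, which completes the proof. The only subtle point is securing the $H^1$-regularity of $e_p$ needed to invoke the assumption on $\Pi_Q$; the identity $\bA g = -\bcurl Tg$ delivers it at no cost, after which the argument is a standard energy estimate plus duality.
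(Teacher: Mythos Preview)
Your argument is correct and the treatment of $\be$ is essentially identical to the paper's: both test the difference equations with $\btau=\be$ and $q=e_p$, use the orthogonality of $\Pi_Q$ to replace $e_p$ by $(I-\Pi_Q)e_p$, and invoke the second part of Assumption~\ref{assump} on $\phi=e_p$ (the paper phrases this via a supremum over $\phi\in H^1\cap L^2_0$, but the content is the same).

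The bound on $e_p$ is where you diverge from the paper, and your route is needlessly elaborate. You already established $e_p\in H^1(\Omega)\cap L^2_0(\Omega)$ with $\bcurl e_p=-\be$; the paper simply applies the Poincar\'e inequality at this point to obtain $\|e_p\|_{L^2(\Omega)}\le C\|\bcurl e_p\|_{L^2(\Omega)}=C\|\be\|_{L^2(\Omega)}$ and is done. Your duality argument---introducing $\bld{\xi}=\bA\psi$, invoking Proposition~\ref{embedding} to bound $\|\bld{\xi}\|_{L^2}$, and testing the first difference equation---recovers exactly the same inequality $\|e_p\|_{L^2}\le C\|\be\|_{L^2}$, but through a detour that brings in the fractional regularity of $\mbV$ where it is not needed. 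Both are valid; the paper's one-line Poincar\'e step is the more economical choice once you have $e_p\in H^1\cap L^2_0$ in hand.
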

\begin{proof}
Let $f \in L^2(\Omega)$  and set $\bsigma=\bA f, u=Tf$, $\bpsi=\bA \Pi_Q f$ and $w= T \Pi_Q f$.
We see that
\begin{subequations}
\label{eqn:auxline}
 \begin{alignat}{2}
 \label{eqn:auxline1}
(\bsigma-\bpsi, \btau)+(u-w, \rot \btau)&=0 \qquad && \forall \btau \in  \bH_0(\rot, \Omega), \\
\label{eqn:auxline2}
(\rot (\bsigma-\bpsi), v)&=(f-\Pi_Q f, v) \quad && \forall  v \in L^2_0(\Omega).
\end{alignat}
\end{subequations} 
Setting $v = w-u$ in \eqref{eqn:auxline2} and $\btau = \bsigma-\bpsi$ in \eqref{eqn:auxline1}, and adding the result yields
$\|\bsigma - \bpsi\|_{L^2(\Omega)}^2 = (f-\Pi_Q f,w-u)$.
Furthermore, \eqref{eqn:auxline1} implies $\bcurl (u-w)=\bsigma-\bpsi$. 
Therefore, there holds
 \begin{equation*}
 \|\bsigma-\bpsi\|_{L^2(\Omega)} \le \sup_{\phi \in H^1(\Omega) \cap L_0^2(\Omega)} \frac{(f-\Pi_Q f, \phi)}{\|\bcurl \phi\|_{L^2(\Omega)}}. 
 \end{equation*}
 However, the properties of the $L^2$ projection
 and Assumption \ref{assump} give us
\begin{equation*}
  \sup_{\phi \in H^1(\Omega) \cap L_0^2(\Omega)} \frac{(f-\Pi_Q f, \phi)}{\|\bcurl \phi\|_{L^2(\Omega)}} = \sup_{\phi \in  H^1(\Omega) \cap L_0^2(\Omega) } \frac{(f, \phi-\Pi_Q \phi)}{\|\bcurl \phi\|_{L^2(\Omega)}} \le  \omega_1(h)\|f\|_{L^2(\Omega)}.
 \end{equation*}
Thus, we have shown
 \begin{equation*}
 \|\bA \Pi_Q f-\bA f \|_{L^2(\Omega)} \le   \omega_1(h) \|f\|_{L^2(\Omega)}.
 \end{equation*}
 \rev{Finally, because $Tf\in L^2_0(\Omega)$, we have by the Poincare inequality}
\begin{equation*}
 \|T \Pi_Q f -T f\|_{L^2(\Omega)} \le C \rev{\|\bcurl (T\Pi_Q \revj{f}-Tf)\|_{L^2(\Omega)} =} \|\bA \Pi_Q f -\bA f \|_{L^2(\Omega)} \le C \omega_1(h) \|f\|_{L^2(\Omega)}.
\end{equation*}
\end{proof}

Next we prove that Assumption \ref{assump} implies
the inf-sup condition for the pair $(\mbV_h,\hQ)$.
%%%%%%%%%%%%%%%%%%
%%%%%%%%%%%%%%%%%%
%%%%%%%%%%%%%%%%%%
\begin{lemma}\label{infsup}
Suppose that Assumption \ref{assump} is satisfied.
Then there exists a constant $C>0$ such that for every  $u_h \in \hQ$, 
there exists $\btau_h \in \mathring{\bV}_h$ such that $\rot \btau_h=u_h$ and $\|\btau_h\|_{L^2(\Omega)} \le C \|u_h\|_{L^2(\Omega)}$.
\end{lemma}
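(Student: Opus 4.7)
The plan is to lift $u_h\in \hQ$ to a continuous function $\btau\in \hbV$ with $\rot\btau=u_h$ and suitable regularity, then use the Fortin-type projection $\bPi_V$ from Assumption \ref{assump} to produce the desired $\btau_h\in \mbV_h$. Because $\rot \bPi_V\btau = \rot\btau = u_h$, the equation will be satisfied automatically, and the main work is in controlling the $L^2$-norm of $\btau_h$.

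For the continuous lift, given $u_h\in \hQ\subset L^2_0(\Omega)$, I would solve the Neumann problem: find $\phi\in H^1(\Omega)\cap L^2_0(\Omega)$ with $-\Delta \phi = u_h$ in $\Omega$ and $\partial_\bn \phi=0$ on $\p\Omega$. This is well-posed by the Lax--Milgram lemma since $u_h$ has zero mean, and yields $\|\nabla\phi\|_{L^2(\Omega)}\le C\|u_h\|_{L^2(\Omega)}$. Set $\btau = \bcurl \phi$. A direct computation gives $\rot\btau = -\Delta \phi = u_h$, $\dive \btau = 0$, and $\btau\cdot\bt = -\p_\bn \phi = 0$ on $\p\Omega$. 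Hence $\btau\in \mbV$, and since $\rot\btau=u_h\in \hQ$, we in fact have $\btau\in \hbV$.

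Next, I would apply the embedding of Proposition \ref{embedding} to $\btau$:
\begin{equation*}
\|\btau\|_{H^{\frac{1}{2}+\delta}(\Omega)} \le C(\|\dive\btau\|_{L^2(\Omega)}+\|\rot\btau\|_{L^2(\Omega)}) = C\|u_h\|_{L^2(\Omega)}.
\end{equation*}
In particular, by the continuous embedding $H^{\frac{1}{2}+\delta}(\Omega)\hookrightarrow L^2(\Omega)$, we also have $\|\btau\|_{L^2(\Omega)}\le C\|u_h\|_{L^2(\Omega)}$.

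Finally, I would set $\btau_h := \bPi_V \btau\in \mbV_h$. By Assumption \ref{assump}, $\rot\btau_h = \rot \btau = u_h$, and
\begin{equation*}
\|\btau_h-\btau\|_{L^2(\Omega)} \le \omega_0(h)\bigl(\|\btau\|_{H^{\frac{1}{2}+\delta}(\Omega)}+\|\rot\btau\|_{L^2(\Omega)}\bigr) \le C\omega_0(h)\|u_h\|_{L^2(\Omega)}.
\end{equation*}
Since $\omega_0(h)\to 0$, it is uniformly bounded in $h$, and the triangle inequality gives $\|\btau_h\|_{L^2(\Omega)}\le C\|u_h\|_{L^2(\Omega)}$, as required. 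The proof is essentially a standard Fortin-type argument; the only nontrivial ingredient is the regularity shift provided by Proposition \ref{embedding}, which compensates for the lack of $H^2$-regularity for the Neumann problem on a general (possibly nonconvex) polygonal domain.
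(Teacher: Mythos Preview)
Your argument is correct and follows the same Fortin-type strategy as the paper: construct a continuous lift $\btau\in\hbV$ of $u_h$, apply $\bPi_V$, and bound $\|\btau_h\|_{L^2(\Omega)}$ via the triangle inequality using the approximation estimate in Assumption~\ref{assump}. The difference lies in how the lift is produced. The paper simply invokes the classical right inverse of the divergence (rotated): there exists $\btau\in\bH_0^1(\Omega)$ with $\rot\btau=u_h$ and $\|\btau\|_{H^1(\Omega)}\le C\|u_h\|_{L^2(\Omega)}$, so the $H^{\frac12+\delta}$-bound is immediate from $H^1\hookrightarrow H^{\frac12+\delta}$ and Proposition~\ref{embedding} is not needed. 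Your route---solve the Neumann problem and set $\btau=\bcurl\phi$---yields only $\btau\in\bL^2(\Omega)$ a priori, so you compensate by noting $\btau\in\mbV$ with $\dive\btau=0$ and then invoking Proposition~\ref{embedding} to recover the fractional regularity. Both work; the paper's lift is a line shorter, while yours is more explicit and makes clear that only the embedding $\mbV\hookrightarrow \bH^{\frac12+\delta}(\Omega)$, already used elsewhere in the convergence framework, is required.
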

\begin{proof}
Let $\btau \in \bH_0^1(\Omega)$ with $\rot \btau=u_h$ such that $\|\btau\|_{H^1(\Omega)} \le C \|u_h\|_{L^2(\Omega)}$.  
Noting that $\btau \in \hbV$, we define $\btau_h=\bPi_V \btau$ so that $\rot \btau_h=\rot \btau=u_h$.  Moreover, 
\begin{equation*}
\|\btau_h\|_{L^2(\Omega)} \le  C (\|\btau\|_{H^{\frac{1}{2}+\delta}(\Omega)}+ \|\rot \btau\|_{L^2(\Omega)}) \le C \|\btau\|_{{H^1}(\Omega)} \le C \|u_h\|_{L^2(\Omega)}.
\end{equation*}
\end{proof}
%%%%%%%%%%%%%%%%%%%%%%
%%%%%%%%%%%%%%%%%%%%%%
%%%%%%%%%%%%%%%%%%%%%%

Now we can prove Theorem \ref{mainthm}.
\begin{proof}[Proof of Theorem \ref{mainthm}]
Let $f \in L^2(\Omega)$, and set $\bsigma=\bA f, u=Tf$ and  $\bsigma_h=\bA_h f, u_h=T_h f$.  
Let $\bpsi=\bA \Pi_Q f$ and $w= T \Pi_Q f$. 
 
We first derive an estimate for $\bpsi-\bsigma_h$.  Using the inclusion $\rot \mbV_h\subset \hQ$, we see that   
\begin{alignat*}{2}
(\bPi_V\bpsi-\bsigma_h, \btau_h)+(\Pi_Q w -u_h, \rot \btau_h)&= (\bPi_V \bpsi-\bpsi, \btau_h) \qquad && \forall \rev{\btau_h} \in \mbV_h, \\
(\rot (\bPi_V \bpsi-\bsigma_h), v_h)&=0 \quad  &&  \forall  v_h \in \hQ.
\end{alignat*}
Setting $\btau_h=\bPi_V\bpsi-\bsigma_h$ and applying the Cauchy--Schwarz inequality
yields
\begin{equation*}
\|\bPi_V \bpsi-\bsigma_h\|_{L^2(\Omega)} \le \|\bPi_V \bpsi-\bpsi\|_{L^2(\Omega)} \le \omega_0(h) (\|\bpsi\|_{H^{\frac{1}{2}+\delta}(\Omega)}+ \|\rot \bpsi\|_{L^2(\Omega)}).
\end{equation*}
If we use  Proposition \ref{embedding} we get
\begin{equation*}
\|\bpsi\|_{H^{\frac{1}{2}+\delta}(\Omega)}\le C (\|\dive \bpsi\|_{L^2(\Omega)}+ \|\rot \bpsi\|_{L^2(\Omega)})= C  \|\rot \bpsi\|_{L^2(\Omega)} =C \|\Pi_Q f\|_{L^2(\Omega)} \le C  \|f\|_{L^2(\Omega)}.
\end{equation*}
Hence, 
\begin{equation*}
\|\bPi_V\bpsi-\bsigma_h\|_{L^2(\Omega)} \le  C \omega_0(h) \|f\|_{L^2(\Omega)}.
\end{equation*}
Next, we note that by Lemma \ref{auxlemma},
\begin{equation*}
\| \bsigma-\bpsi\|_{L^2(\Omega)} + \|w-u\|_{L^2(\Omega)} \le C \omega_1(h) \|f\|_{L^2(\Omega)},
\end{equation*}
and therefore,
\begin{align*}
\|(\bA -\bA_h) f\|_{L^2(\Omega)} &=\| \bsigma-\bsigma_h\|_{L^2(\Omega)} \\
&\le \|\bsigma - \bpsi\|_{L^2(\Omega)} +\|\bsigma_h - \bPi_V \bpsi\|_{L^2(\Omega)}+\|\bPi_V \bpsi-\bpsi\|_{L^2(\Omega)}\\ 
 &\le C (\omega_0(h)+\omega_1(h)) \|f\|_{L^2(\Omega)}.
\end{align*}

Using the inf-sup stability stated in Lemma \ref{infsup}, we have 
\begin{equation*}
\|\Pi_Q w-u_h\|_{L^2(\Omega)} \le  C \|\bpsi-\bsigma_h\|_{L^2(\Omega)} \le  C( \omega_0(h)+\omega_1(h)) \|f\|_{L^2(\Omega)}.
\end{equation*}
Hence, we have 
\begin{align*}
\|w-u_h\|_{L^2(\Omega)} 
&\le   C (\omega_0(h)+\omega_1(h)) \|f\|_{L^2(\Omega)}+ \|w-\Pi_Q w\|_{L^2(\Omega)}\\
& \le C  (\omega_0(h)+\omega_1(h)) \|f\|_{L^2(\Omega)} +\omega_1(h) \|\bcurl w\|_{L^2(\Omega)}.
\end{align*}
But we have $\|\bcurl w\|_{L^2(\Omega)} \le C \|\Pi_Q f\|_{L^2(\Omega)}\le C \|f\|_{L^2(\Omega)}$, and so
%Hence, we have 
\begin{equation*}
\|(T-T_h) f\|_{L^2(\Omega)} =\|u-u_h\|_{L^2(\Omega)} \le  C( \omega_0(h)+\omega_1(h))  \|f\|_{L^2(\Omega)}. 
\end{equation*}

\end{proof}

%%%%%%%%%%%%%%%%%%%%%%%%%%%%%%%%%%%%%%%%%%%%%%%%%%%%%
%%%%%%%%%%%%%%%%%%%%%%%%%%%%%%%%%%%%%%%%%%%%%%%%%
\section{Examples of Fortin Operators}\label{sec-Examples}
In this section we give examples of finite element pairs satisfying Assumption \ref{assump},
where $\mbV_h$ is taken to be a space of continuous, piecewise polynomials, i.e.,
a Lagrange finite element space.  Here we use recent results on divergence-free finite element
pairs for the Stokes problem to construct a Fortin projection satisfying 
\eqref{eqn:Assumpt}.  A common theme of these Stokes pairs is the imposition of mesh conditions
for low-polynomial degree finite element spaces; it is well-known that
Assumption \ref{assump} is not satisfied on general simplicial meshes and for low polynomial degree.  
Before continuing, we introduce
some notation.

We denote by $\mct$ a shape-regular, simplicial triangulation of $\Omega$
with $h_T = {\rm diam}(T)$ for all $T\in \mct$, and $h = \max_{T\in \mct} h_T$.
Let $\calV^I_h$, $\calV_h^B$, $\calV_h^C$ denote the sets of interior vertices,
boundary vertices, and corner vertices, respectively.
\MJN{Note that the cardinality of $\calV_h^C$ is uniformly bounded
due to the shape-regularity of $\mct$.}
The set of all vertices is $\calV_h = \calV_h^I\cup \calV_h^B$.
Likewise, $\calE_h^I$ and $\calE_h^B$ are the sets of interior
and boundary edges, respectively, and $\calE_h = \calE_h^I\cup \calE_h^B$.
We denote by $\mct(z)$ the patch of triangles that have $z\in \calV_h$ as a vertex.
Likewise, $\calV_h^I(T)$ and $\calV_h^B(T)$ are the sets
of interior and boundary vertices of $T\in \mct$, 
and $\calE_h^I(T)$ is the set of interior edges of $T$.

For a non-negative integer $k$ and  set $S\subset \Omega$,
 let $\pol_k(S)$ to be the space of piecewise polynomials
of degree $\le k$ with domain $S$.  The analogous space of piecewise 
polynomials with respect to $\mct$ is
\[
\pol_k(\mct) = \prod_{T\in \mct} \pol_k(T),
\]
and the Lagrange finite element space is
\[
\pol_k^c(\mct) = \pol_k(\mct)\cap H^1(\Omega).
\]
Analogous vector-valued spaces are denoted in boldface, e.g., $\bpol_k(\mct) = [\pol_k(\mct)]^2$.
Finally, the constant $C$ denotes a generic constant
that is independent of the mesh parameter $h$ and may take
different values at different occurrences.

%%%%%%%%%%%%%%%%%%%%%%%%%%%%%%%%
%%%%%%%%%%%%%%%%%%%%%%%%%%%%%%%%
%%%%%%%%%%%%%%%%%%%%%%%%%%%%%%%%
In the subsequent sections, 
we will employ a Scott--Zhang type interpolant
on the space $\mbV$. %$\hbV$.
We cannot use the Scott--Zhang interpolant directly, as the canonical Scott--Zhang interpolant of a 
function in $\mbV$ might not have zero tangential components at the corners of  $\Omega$; 
hence, we have to modify the Scott--Zhang interpolant at the corners of $\Omega$. \revj{This type of interpolant has been used for example in \cite[(2.14) and (2.15)]{BonitoGuermond11}.
For completeness we give a detailed construction in the appendix but we state the result here.}
%
%%%%%%%%%%%%%%%%%%
%%%%%%%%%%%%%%%%%%
%%%%%%%%%%%%%%%%%%
\begin{lemma}\label{lem:SZ}
Let $0< \delta \le \frac{1}{2}$.  There exists a projection $\bI_h: \bH^{\frac{1}{2}+\delta}(\Omega)\cap \bH_0({\rm rot},\Omega) \rightarrow  \bpol_1^c(\mct)\cap \bH_0({\rm rot},\Omega)$ with the following bound:
%If $T\in \mct$ does not have a corner vertex (i.e., $(\calV_h^I(T)\cup \calV_h^B(T))\cap \calV_h^C = \emptyset$), then
\begin{equation}\label{eqn:SZmod}
h_T^{-\frac{1}{2}-\delta} \|\btau-\bI_h \btau\|_{L^2(T)} + \|\bI_h \btau\|_{H^{\frac{1}{2}+\delta}(T)} \le C\|\btau\|_{H^{\frac{1}{2}+\delta}(\omega(T))}\qquad \forall \btau\in \mbV,
\end{equation}
where $\omega(T) = \displaystyle\mathop{\bigcup_{T'\in \mct}}_{\bar T\cap \bar T'\neq \emptyset} T'$.
%Otherwise, if $T\in \mct(z)$ for some $z\in \calV_h^C$, then
%%$(\calV_h^I(T)\cup \calV_h^B(T))\cap \calV_h^C \neq \emptyset$, then
%\begin{equation}\label{eqn:SZmod2}
%h_T^{-\frac{1}{2}-\delta} \|\btau-\bI_h \btau\|_{L^2(T)} + \|\bI_h \btau\|_{H^{\frac{1}{2}+\delta}(T)} \le C\|\btau\|_{H^{\frac{1}{2}+\delta}(\Omega)}\qquad \forall \btau\in \mbV.
%\end{equation}
%}
\end{lemma}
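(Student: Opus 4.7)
The plan is to construct $\bI_h$ as a modification of the vector-valued Scott--Zhang interpolant: the averaging facets are chosen to preserve the boundary condition $\btau \cdot \bt = 0$ away from corners, and the nodal value at each corner of $\Omega$ is forced to vanish. For every $z \in \calV_h$, pick a facet $\sigma_z$ containing $z$: an arbitrary $T \in \mct(z)$ if $z\in\calV_h^I$; an adjacent boundary edge $\sigma_z \in \calE_h^B$ if $z \in \calV_h^B\setminus\calV_h^C$ (the two adjacent edges are collinear and either choice works). Let $\psi_z\in \pol_1(\sigma_z)$ denote the usual (scalar) Scott--Zhang dual basis, and set
\[
\bI_h \btau = \sum_{z \in \calV_h\setminus\calV_h^C} \Big(\int_{\sigma_z} \btau\,\psi_z\Big)\phi_z,
\]
where $\phi_z$ is the nodal hat function; by construction the nodal value at every corner is $\bld{0}$.

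First I would verify the projection property and the range. For $\btau \in \bpol_1^c(\mct)\cap\bH_0({\rm rot},\Omega)$ and non-corner $z$, duality gives $\int_{\sigma_z}\btau\,\psi_z = \btau(z)$ (applied componentwise); at a corner $z$ the two adjacent boundary edges $e_1,e_2$ have linearly independent tangents and $\btau(z)\cdot\bt_{e_i} = 0$ forces $\btau(z) = \bld{0}$, matching the imposed value. On each boundary edge $e = [z_1,z_2]$, the linear function $\bI_h\btau\cdot\bt_e$ vanishes at a corner endpoint by construction, and at a non-corner endpoint $z_i$ the averaging edge $\sigma_{z_i}$ is collinear with $e$, so $\int_{\sigma_{z_i}}(\btau\cdot\bt_{\sigma_{z_i}})\psi_{z_i}=0$ using $\btau\cdot\bt = 0$ on $\partial\Omega$. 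Hence $\bI_h\btau\cdot\bt_e\equiv 0$ on $e$ and $\bI_h\btau\in \bpol_1^c(\mct)\cap\bH_0({\rm rot},\Omega)$.

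The stability and approximation bound then follows from the classical Scott--Zhang analysis on every element $T$ not touching a corner. For $T$ incident to a corner $z$, I would compare with the unmodified variant $\tilde\bI_h$ that uses the same averaging facets but does \emph{not} zero out the corner value. Classical theory yields
\[
\|\btau - \tilde\bI_h\btau\|_{L^2(T)} + h_T^{1/2+\delta}\|\tilde\bI_h\btau\|_{H^{1/2+\delta}(T)} \le C h_T^{1/2+\delta}\|\btau\|_{H^{1/2+\delta}(\omega(T))},
\]
while $\|\bI_h\btau - \tilde\bI_h\btau\|_{L^2(T)} \le C h_T|\tilde\bI_h\btau(z)|$. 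Since $\btau\cdot\bt_{\sigma_z} = 0$ on $\sigma_z$, the tangential part of $\tilde\bI_h\btau(z)$ along $\bt_{\sigma_z}$ vanishes, so only the normal component survives, and a scaled trace inequality controls it by $Ch_T^{-1/2}\|\btau\cdot\bn_{\sigma_z}\|_{L^2(\sigma_z)}$.

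The hard part will be showing $\|\btau\cdot\bn_{\sigma_z}\|_{L^2(\sigma_z)} \le Ch_T^{\delta}\|\btau\|_{H^{1/2+\delta}(\omega(T))}$. This is a Hardy-type statement that exploits $\btau\cdot\bt_{e'}=0$ on the second boundary edge $e'$ at $z$: because $\bt_{\sigma_z}$ and $\bt_{e'}$ are linearly independent, a scaling and compactness argument on a fixed reference corner patch shows that within the subspace $\{\btau\in \bH^{1/2+\delta} : \btau\cdot\bt_{\sigma_z}|_{\sigma_z}=\btau\cdot\bt_{e'}|_{e'}=0\}$ no nontrivial element has zero $H^{1/2+\delta}$ seminorm (a constant satisfying both conditions must be $\bld{0}$). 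A Rellich--Kondrachov compactness argument and scaling then yield $\|\btau\|_{L^2(\omega(T))} \le Ch_T^{1/2+\delta}|\btau|_{H^{1/2+\delta}(\omega(T))}$, and a trace inequality delivers the desired edge bound. The finite number of corners of $\Omega$ produces a uniform constant, and combining all contributions yields the stated estimate.
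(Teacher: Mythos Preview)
Your construction differs from the paper's in how the corner vertices are handled, and the difference is instructive. You force $\bI_h\btau(z)=\bld{0}$ at each corner $z$ and then must recover the approximation estimate by a Poincar\'e/Hardy argument exploiting the two independent tangential constraints at $z$. The paper instead assigns at each corner the value
\[
\bbeta_z(\btau)=\frac{\bn_z^2}{\bn_z^2\cdot\bt_z^1}\int_{e_z^1}(\btau\cdot\bt_z^1)\psi_z^1+\frac{\bn_z^1}{\bn_z^1\cdot\bt_z^2}\int_{e_z^2}(\btau\cdot\bt_z^2)\psi_z^2,
\]
built from the \emph{tangential} traces on both boundary edges $e_z^1,e_z^2$. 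This functional is automatically zero when $\btau\cdot\bt=0$ on $\partial\Omega$, reproduces $\btau(z)$ on piecewise linears, and is bounded by the same scaled trace inequality used at every other vertex, with only an extra factor $M=\max_{z\in\calV_h^C}|\bn_z^1\cdot\bt_z^2|^{-1}$ depending only on the corner angles of $\Omega$. The stability estimate then follows from the standard Scott--Zhang argument with no additional analytic input; the Hardy-type step is completely bypassed.

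Your route is workable, but the compactness argument as you describe it (``on a fixed reference corner patch'') is not quite sound: the patch $\omega(T)$ has mesh-dependent combinatorics and cannot be mapped to a single reference configuration. What you actually need, and what does hold uniformly, is the chain: (i) the fractional Poincar\'e inequality $\|\btau-\bar\btau\|_{L^2(\omega(T))}\le Ch_T^{1/2+\delta}|\btau|_{H^{1/2+\delta}(\omega(T))}$ with $\bar\btau$ the mean (this is uniform under shape regularity; see the references the paper cites for \eqref{eqaux}); (ii) the scaled trace bound $\|\btau-\bar\btau\|_{L^2(e_z^i)}\le Ch_T^{\delta}|\btau|_{H^{1/2+\delta}(\omega(T))}$; and (iii) since $\btau\cdot\bt_z^i=0$ on $e_z^i$, $|\bar\btau\cdot\bt_z^i|\le |e_z^i|^{-1}\|\btau-\bar\btau\|_{L^1(e_z^i)}$, which together with the linear independence of $\bt_z^1,\bt_z^2$ gives $|\bar\btau|\le Ch_T^{-1/2+\delta}|\btau|_{H^{1/2+\delta}(\omega(T))}$. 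Combining yields your Poincar\'e estimate with a constant depending only on shape regularity and the corner angles. So your proof can be completed, but the paper's $\bbeta_z$ trick is the cleaner path.
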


%%%%%%%%%%%%%%%%%%%%%%%%%%%%%%%%%%%%%%%
\subsection{Construction of Fortin Operator on Powell--Sabin Splits}\label{sec-PS}
In this section, we use the recent results
given in \cite{GLN19} to construct a Fortin projection
into the Lagrange finite element space
defined on Powell--Sabin triangulations.
For simplicity and readability, we focus on the lowest-order case;
however, the arguments easily extend to arbitrary polynomial degree $k\ge 1$.

Given the simplicial triangulation of $\mct$ of $\Omega$,
we construct its Powell--Sabin refinement $\PST$
as follows \cite{PowellSabin77,SplineBook,GLN19}: 
First, adjoin the incenter of each $T\in \mct$ to each
vertex of $T$. Next, the interior points (incenters) of each adjacent pair of triangles are connected with an edge.
For any $T$ that shares an edge with the boundary of $\Omega$, the midpoint
of that edge is connected with the incenter of $T$.  
Thus, each $T\in \mct$ is split into six triangles; cf.~Figure \ref{fig:PS}.
\begin{figure}
\begin{center}
\includegraphics[scale=0.25]{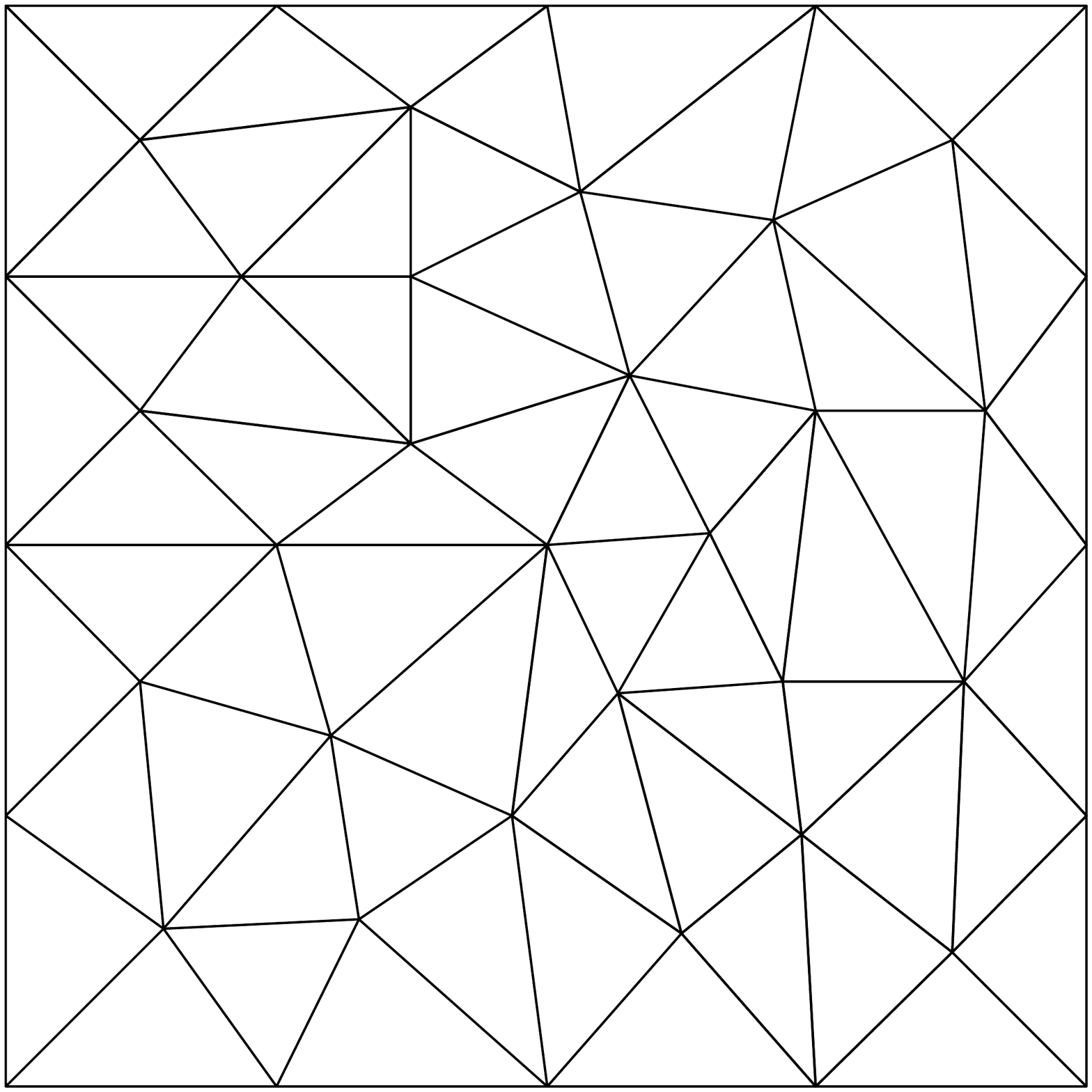}\quad
\includegraphics[scale=0.25]{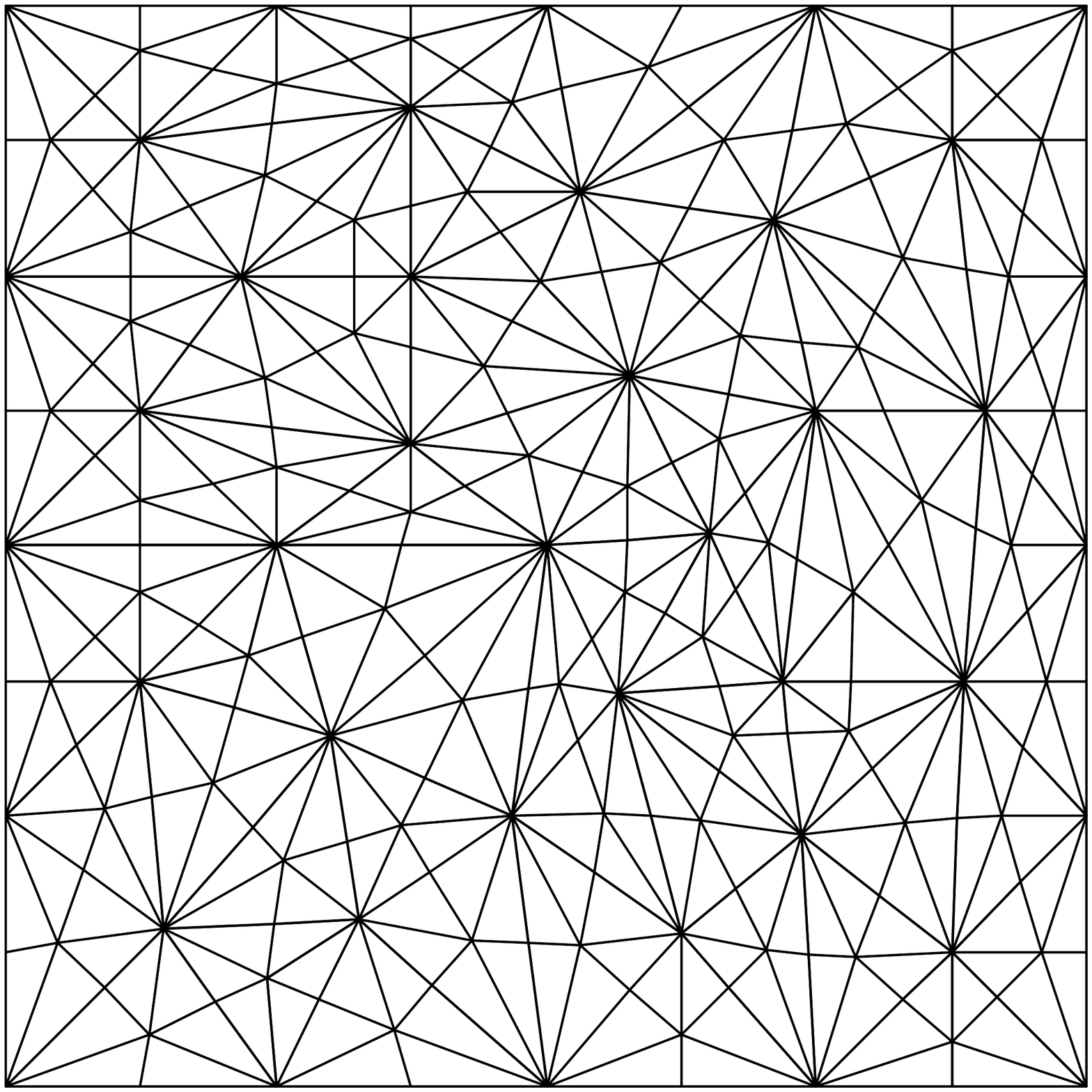}
\caption{A simplicial triangulation of the unit square (left)
and the associated Powell--Sabin triangulation (right).}
\label{fig:PS}
\end{center}
\end{figure}

Let $\calS^I_h(\PST)$ be the points of intersection
of the {interior} edges of $\mct$ that adjoin incenters,
 let $\calS^B_h(\PST)$ be the intersection points of 
the boundary edges that adjoin incenters, and set $\calS_h(\PST) = \calS_h^I(\PST)\cup \calS_h^B(\PST)$.
Note that, by the definition of the Powell--Sabin split,
the points in $\calS_h(\PST)$ are the singular vertices in $\PST$, i.e,
the vertices that lie on exactly two straight lines.
In particular, for a vertex $z\in \calS^I_h(\PST)$
there exists four triangles $\PST(z) = \{T_i\}_{i=1}^4\subset \PST$
such that $z$ is a vertex of $T_i$.  Without loss of generality
we assume that these triangles are labeled in a counterclockwise direction.
We then define
for a scalar function $v$,
\begin{equation}\label{eqn:SAlt}
\theta_z(v):= v|_{T_1}(z) - v|_{T_2}(z) + v|_{T_3}(z) - v|_{T_4}(z).
\end{equation}

We then define the spaces
\begin{subequations}
\label{eqn:PSSpaces}
\begin{align}
%\Sigma_h & = H^2(\Omega)\cap H^1_0(\Omega)\cap \pol_2(\PST),\\
%
\mbV_h & =\bpol^c_1(\PST)\cap \bH_0({\rm rot},\Omega),\\
\hQ
& = \{v\in \pol_0(\PST) \cap L^2_0(\Omega):\ \theta_z(v) = 0\ \forall z\in \calS_h^I(\PST)\}.
\end{align}
\end{subequations}
%

%%%%%%%%%
%%%%%%%%%
%%%%%%%%%
\begin{lemma}[\cite{GLN19}]\label{thm:DExact}
Let $\mbV_h$ and $\hQ$ be defined by \eqref{eqn:PSSpaces}.
Then there holds ${\rm rot}\,\mbV_h \subset \hQ$.
\end{lemma}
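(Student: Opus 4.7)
The plan is to verify the three defining properties of $\hQ$ in turn for $q:=\rot \bv_h$ with $\bv_h\in \mbV_h$: piecewise-constant regularity on $\PST$, zero mean, and the alternating-sum condition at every interior singular vertex.

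First, since $\bv_h\in \bpol_1^c(\PST)$, on each $T\in \PST$ the field $\bv_h$ is linear, hence $\rot \bv_h|_T$ is a constant. Thus $\rot \bv_h \in \pol_0(\PST)$. Second, integration by parts gives $\int_\Omega \rot \bv_h\, dx = \int_{\p\Omega} \bv_h\cdot \bt\,ds = 0$, using $\bv_h\cdot \bt=0$ on $\p\Omega$, so $\rot \bv_h\in L^2_0(\Omega)$.

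The main work is to show $\theta_z(\rot \bv_h)=0$ for every $z\in \calS^I_h(\PST)$. Fix such a $z$ with the four triangles $\PST(z)=\{T_1,T_2,T_3,T_4\}$ labeled counterclockwise. By the very definition of a singular vertex, the four edges of $\PST$ emanating from $z$ lie on exactly two straight lines; let $\bt_a$ be a unit tangent to the line containing the edges shared by $T_1/T_2$ and $T_3/T_4$, and let $\bt_b$ be a unit tangent to the line containing the edges shared by $T_2/T_3$ and $T_4/T_1$. Since the two lines are distinct, $\{\bt_a,\bt_b\}$ is linearly independent.

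On each $T_i$ the gradient $G_i:=\nabla \bv_h|_{T_i}\in \bbR^{2\times 2}$ is constant, and $\rot\bv_h|_{T_i}=(G_i)_{21}-(G_i)_{12}$. The crucial point is that continuity of $\bv_h$ across an edge with tangent $\bt$ forces the tangential derivative to match, i.e.\ $(G_i-G_j)\bt=0$ on any shared edge with tangent $\bt$. Applying this to the four edges at $z$, I get
\begin{equation*}
(G_1-G_2)\bt_a=0,\quad (G_2-G_3)\bt_b=0,\quad (G_3-G_4)\bt_a=0,\quad (G_4-G_1)\bt_b=0.
\end{equation*}
Adding the first and third gives $(G_1-G_2+G_3-G_4)\bt_a=0$, and adding the second and fourth (with an overall sign) gives $(G_1-G_2+G_3-G_4)\bt_b=0$. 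Linear independence of $\bt_a,\bt_b$ then yields $G_1-G_2+G_3-G_4=0$ as a matrix, and taking the $(2,1)-(1,2)$ entry gives $\theta_z(\rot \bv_h)=0$, as required.

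The only real obstacle is the vertex-condition step; the rest is bookkeeping. The geometric input that makes it work is the singular-vertex structure of $\PST$, i.e.\ that the four edges at $z\in\calS_h^I(\PST)$ come in two collinear pairs, which is exactly what allows the same tangent vector to appear on two opposite edges and produce the alternating cancellation.
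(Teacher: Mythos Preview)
Your proof is correct. The paper itself does not prove this lemma but merely cites \cite{GLN19}, so there is no in-paper argument to compare against; your direct verification of the three defining constraints of $\hQ$---piecewise constancy, zero mean, and the alternating-sum condition at each interior singular vertex---is exactly the kind of elementary argument one would expect, and the key step exploiting collinearity of opposite edges at a singular vertex to force $G_1-G_2+G_3-G_4=0$ is clean and complete.
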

%and 
%suppose that $\Omega$ is simply connected. Then the sequence
%\begin{align}
%0 \lrarrow &\Sigma_h \overset{\grad}{\lrarrow} \mbV_h \overset{\rot}{\lrarrow} Q_h \lrarrow 0\label{seq:SLV}
%\end{align}
%is exact; in particular ${\rm rot}\,\mbV_h = Q_h$.
%Moreover, there exists projections
%$\tilde \Pi_\Sigma:C^\infty(\Omega)\to \Sigma_h$,
%$\tilde \bPi_{V}:\bC^\infty(\Omega)\to \mbV_h$
%and $\tilde \Pi_Q:C^\infty(\Omega)\to Q_h$
%such that the following diagram commutes:
%%%
%\begin{center}
%\begin{tikzcd}
%  0\arrow[r] 
%    & C^\infty(\Omega) \arrow[d, "\tilde\Pi_\Sigma"] \arrow [r,"\grad"] & {\bC^\infty(\Omega)} \arrow[d,"\tilde \bPi_{V}"] \arrow[r,"\rot"] & C^\infty(\Omega) \arrow[r] \arrow[d,"\tilde \Pi_Q"]& 0 \\
%  0 \arrow[r] & \Sigma_h \arrow[r,"\grad"] & \mbV_h \arrow[r,"\rot"] & Q_h \arrow[r] & 0.
% \end{tikzcd}
% \end{center}
%\end{theorem}

We now extend the results of
\cite{GLN19} to construct an appropriate Fortin operator
that is well defined for $\btau \in \hbV$.
%\in \bH^{\frac{1}{2}+\delta}(\Omega)$
To do so, 
we require some additional notation.

For an interior singular vertex $z\in \calS_h(\PST)$, 
let $T\in \mct$ be a triangle in $\mct$
such that $z\in \p T$, and let $\{K_1,K_2\}\subset \PST$
be the triangles in $\PST$ such that $K_1,K_2\subset T$
and $K_1,K_2\in \PST(z)$.  Let $e = \p K_1\cap \p K_2$,
and let ${\bm m}_i$ be the outward unit normal of $K_i$
perpendicular to $e$.  We then define the jump
of a scalar piecewise smooth function at $z$ (restricted to $T$) as 
\[
\jump{v}_T(z) = v\big|_{K_1}(z){\bm m}_1 + v\big|_{K_2}(z) {\bm m}_2.
\]
Note that $\jump{v}_T(z)$ is single-valued for all $v\in \hQ$.
\MJN{In particular, if $z$ is an interior singular vertex
with $z\in \p T_1\cap \p T_2$ for some $T_1,T_2\in \mct$, $T_1\neq T_2$,
then $\jump{v}_{T_1}(z) = \jump{v}_{T_2}(z)$ for all $v\in \hQ$ 
because $\theta_z(v)=0$.  Therefore, we shall omit the subscript
and simply write $\jump{v}(z)$.}

Next for a triangle $T\in \mct$ in the non-refined mesh,
we denote by $T^{\rm ct}$ the resulting set
of three triangles obtained by connecting the barycenter
of $T$ to its vertices, i.e., $T^{\rm ct}$ is the Clough--Tocher
refinement of $T$.  We define the set of (local) piecewise polynomials
with respect to this partition as
\begin{equation}\label{eqn:LocalCT}
\pol_k(T^{\rm ct}) = \prod_{K\in T^{\rm ct}} \pol_k(K).
\end{equation}

The following lemma provides
the degrees of freedom for $\mbV_h$ and $\hQ$ that will be
used to construct the Fortin operator.
The result essentially follows from 
\cite[\rev{Lemmas 10--11}]{GLN19}.
%%%%%%%%%%%%%%%
%%%%%%%%%%%%%%%
%%%%%%%%%%%%%%%
\begin{lemma}
A function $\btau\in \mbV_h$ is uniquely defined by the conditions
\begin{subequations}
\label{eqn:VhDOFs}
\begin{alignat}{3}
\label{eqn:VhDOFs1}
& \btau(z)\qquad &&\forall z\in \calV_h^I,\\
\label{eqn:VhDOFs2}
& \btau(z)\cdot \bn\qquad &&\forall z\in \calV_h^B\backslash \calV_h^C,\\
\label{eqn:VhDOFs3}
& \int_e (\btau\cdot \bt)\qquad &&\forall e\in \calE_h^I,\\
&\jump{\rot \btau}(z)\qquad &&\forall z\in \calS_h(\PST),\\
%
%&\int_T \bv\cdot (\grad u)\qquad &&\forall u\in \mathring{\Sigma}(T^{\rm ps}),\ \forall T\in \mct,\\
%
& \int_T (\rot \btau) r \qquad &&\forall r\in {\pol}_0(T^{\rm ct})\cap L^2_0(T),\ \forall T\in \mct.
\end{alignat}
\end{subequations}
%Here, $\mathring{\pol}_0(T^{\rm ct})$ is the two--dimensional space
%consisting of piecewise constants with respect to the Clough-Tocher split of $T$
%and with vanishing mean [WE NEED TO DEFINE THE CLOUGH TOCHER SPLIT and notation $T^{\rm ct}$].  
Moreover, a function $v\in \hQ$ is uniquely determined
by the values
\begin{subequations}
\label{eqn:QDOFs}
\begin{alignat}{2}
&\jump{v}(z)\qquad &&\forall z\in \calS_h(\PST),\\
&\int_T v r\qquad &&\forall r\in \mathring{\pol}_0(T^{\rm ct}),\ \forall T\in \mct.
\end{alignat}
\end{subequations}
\end{lemma}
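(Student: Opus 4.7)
The statement extends Lemmas 10 and 11 of \cite{GLN19}, which establish essentially the same DOFs but without building in the boundary condition $\btau\cdot\bt=0$ on $\partial\Omega$ (for $\mbV_h$) and the $L^2_0$ constraint (for $\hQ$). The plan is to prove each assertion by verifying that the number of prescribed DOFs equals the dimension of the space (via Euler's relation on the contractible polygon), and then showing that vanishing of all DOFs forces the function to vanish.

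For $\hQ$, the dimension count uses the identities $|\calS_h(\PST)|=|\calE_h|$, $|\calV_h^B|=|\calE_h^B|$, and $|\calV_h|-|\calE_h|+|\mct|=1$. For the vanishing direction, the jump DOFs together with the built-in relations $\theta_z(v)=0$ collapse the six PS-subtriangle values of $v$ on each $T\in\mct$ into three equivalence classes (one per singular vertex of $T$). The interior moment DOFs against $\mathring\pol_0(T^{\rm ct})$ then fix those classes on each $T$, and the $L^2_0$ condition removes the final global constant. This mirrors the argument of \cite[Lemma~11]{GLN19}, adapted to incorporate the $L^2_0$ constraint.

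For $\mbV_h$, I would argue in four stages. \emph{Stage 1 (vertices):} DOFs (a) and (b), combined with the boundary condition and the fact that two linearly independent tangents meet at each corner, force $\btau=0$ at every vertex of $\mct$. \emph{Stage 2 (skeleton):} on each interior edge $e$, the tangential trace $\btau\cdot\bt$ is piecewise linear on the two sub-edges of $e$ (split by the singular vertex), vanishes at both $\mct$-endpoints by Stage 1, and integrates to zero by (c); this forces the singular-vertex value to vanish, and together with the boundary condition yields $\btau\cdot\bt\equiv 0$ on the full $\mct$-skeleton. \emph{Stage 3 (rot):} since $\rot\btau\in\hQ$ by Lemma~\ref{thm:DExact} and the DOFs (d), (e) on $\btau$ coincide with the $\hQ$-DOFs of \eqref{eqn:QDOFs} applied to $\rot\btau$, the unisolvence of $\hQ$ already established gives $\rot\btau=0$.

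Stage 4 is the main obstacle. With $\rot\btau=0$ on the simply connected domain $\Omega$, we may write $\btau=\nabla\phi$ for a scalar potential $\phi$; since $\btau$ is continuous on $\Omega$, so is $\nabla\phi$, placing $\phi$ in the Powell--Sabin $C^1$ quadratic macroelement space $\pol_2(\PST)\cap C^1(\Omega)$. This space is classically unisolvent with respect to the nodal values $\{\phi(v),\nabla\phi(v)\}_{v\in\calV_h}$ at the unrefined mesh vertices \cite{PowellSabin77,SplineBook}. The vanishing of $\btau\cdot\bt$ on the whole $\mct$-skeleton makes $\phi$ constant along each edge of $\mct$, hence globally constant on the connected skeleton, so $\phi(v)=0$ (up to an irrelevant additive constant) at every $\mct$-vertex; meanwhile, $\nabla\phi(v)=\btau(v)=0$ at each such vertex by Stage 1. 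All macroelement DOFs of $\phi$ therefore vanish, forcing $\phi$ to be constant, and hence $\btau=\nabla\phi=0$, completing the proof.
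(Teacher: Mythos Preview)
The paper gives no proof here; it simply defers to \cite[Lemmas 10--11]{GLN19}. Your argument is a legitimate filling-in, and the Stage~4 device---realizing a rot-free $\btau$ as $\nabla\phi$ with $\phi$ in the classical Powell--Sabin $C^1$ quadratic space and then invoking that space's nodal unisolvence---is clean and correct.

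One point needs an extra sentence. Your $\hQ$ injectivity step (``the interior moment DOFs \dots\ fix those classes on each $T$, and the $L^2_0$ condition removes the final global constant'') only works if $\mathring\pol_0(T^{\rm ct})$ in \eqref{eqn:QDOFs} is read as the \emph{full} three-dimensional $\pol_0(T^{\rm ct})$: with mean-zero test functions (two per $T$) the jump and moment DOFs leave $v$ merely piecewise constant on $\mct$, and the single global constraint $\int_\Omega v=0$ cannot eliminate the remaining $|\mct|-1$ free constants. The scaling argument in the proof of Theorem~\ref{thm:PSMainThm} (which takes the supremum over all $r\in\pol_0(T^{\rm ct})$) makes clear the full reading is the intended one. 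Under that reading your $\hQ$ argument goes through, with one harmless redundancy among the DOFs ($\sum_T\int_T v=0$); and in Stage~3 you must then supplement DOF~(e)---which tests $\rot\btau$ only against mean-zero $r$---with the identity $\int_T\rot\btau=\int_{\partial T}\btau\cdot\bt=0$, which is available from Stage~2 via Stokes' theorem.
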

%

%%%%%%%%%%%%%%%%%%%%%%%
%%%%%%%%%%%%%%%%%%%%%%%
%%%%%%%%%%%%%%%%%%%%%%%
\begin{theorem}\label{thm:PSMainThm}
Let $\mbV_h$ and $\hQ$ be defined by \eqref{eqn:PSSpaces},
and let $\hbV$ be defined
by \eqref{eqn:FunkySpace}.
Then there exists a projection $\bPi_V:\hbV \to \mbV_h$
such that $\rot \bPi_V \bp = \rot \bp$ for all $\bp\in \hbV$.  Moreover,
\[
\|\btau-\bPi_V \btau\|_{L^2(\Omega)}\le C\big( h^{\frac{1}{2}+\delta} \|\btau\|_{H^{\frac{1}{2}+\delta}(\Omega)} + h\|\rot \btau\|_{L^2(\Omega)}\big).
\]
\end{theorem}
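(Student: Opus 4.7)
The plan is to define $\bPi_V\btau$ by degree-of-freedom (DOF) prescription using the unisolvent set in \eqref{eqn:VhDOFs}, preceded by a Scott--Zhang pre-processing step, since $\btau\in\hbV$ lacks the pointwise regularity needed to evaluate \eqref{eqn:VhDOFs1}--\eqref{eqn:VhDOFs3} directly.  Given $\btau\in\hbV$, I will first form $\bI_h\btau\in \bpol_1^c(\mct)\cap \bH_0(\rot,\Omega)\subset \mbV_h$ via Lemma \ref{lem:SZ}; this has well-defined nodal and edge-tangential values.  I will then let $\bPi_V\btau\in \mbV_h$ be the unique element whose geometric DOFs \eqref{eqn:VhDOFs1}--\eqref{eqn:VhDOFs3} agree with those of $\bI_h\btau$, and whose rot-type DOFs --- namely $\jump{\rot\btau}(z)$ for $z\in\calS_h(\PST)$ and $\int_T(\rot\btau)r$ for $r\in \pol_0(T^{\rm ct})\cap L^2_0(T)$ --- are read directly off $\btau$ (these are well-defined since $\rot\btau\in\hQ\subset \pol_0(\PST)$ is piecewise constant).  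The commutation property $\rot\bPi_V\btau=\rot\btau$ then follows because both sides lie in $\hQ$ and agree on the unisolvent DOFs \eqref{eqn:QDOFs}, which are precisely the rot-DOFs of $\mbV_h$ selected in \eqref{eqn:VhDOFs}.

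For the error bound I split $\btau-\bPi_V\btau = (\btau-\bI_h\btau) - \bE$, with $\bE:=\bPi_V\btau-\bI_h\btau\in \mbV_h$.  The first term is controlled by Lemma \ref{lem:SZ}, giving the $h^{\frac12+\delta}\|\btau\|_{H^{\frac12+\delta}(\Omega)}$ contribution.  For $\bE$ the geometric DOFs vanish by construction, and the rot DOFs simplify drastically: because $\rot\bI_h\btau$ is a single constant on each $T\in\mct$, its jumps at the $\PST$ singular vertices (which lie interior to edges of $\mct$) vanish, and $\int_T(\rot\bI_h\btau)r=0$ for every mean-zero $r$ on $T$.  Thus the nonzero DOFs of $\bE$ are exactly $\jump{\rot\btau}(z)$ and $\int_T(\rot\btau)r$.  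A scaling argument on a reference macropatch, together with the inverse bound $|\jump{\rot\btau}(z)|\lesssim h_T^{-1}\|\rot\btau\|_{L^2(\omega(T))}$ (valid since $\rot\btau$ is piecewise constant on $\PST$) and $|\int_T(\rot\btau)r_j|\lesssim h_T\|\rot\btau\|_{L^2(T)}$, yields
\[
\|\bE\|_{L^2(T)}\lesssim h_T\|\rot\btau\|_{L^2(\omega(T))}.
\]
Summing over $T\in\mct$ produces the desired $h\|\rot\btau\|_{L^2(\Omega)}$ contribution, and combining with Lemma \ref{lem:SZ} yields the claimed estimate.

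The step I expect to require the most care is the local stability estimate for $\bE$.  Because the jump DOFs at singular vertices of $\PST$ are shared between neighboring triangles of $\mct$, bubble-like functions in $\mbV_h$ with vanishing geometric DOFs are not generally supported on a single $T$, so one must work on a two-triangle macropatch and carefully track which DOFs are localized where, invoking norm equivalence on a finite-dimensional reference configuration and then scaling to $\mct$.  Once that local stability is in hand, the remainder of the argument is an assembly of Scott--Zhang approximation, the unisolvence of both \eqref{eqn:VhDOFs} and \eqref{eqn:QDOFs}, and the observation that $\rot\bI_h\btau$ contributes trivially to every rot DOF on each $T\in \mct$, which is what ultimately makes the DOFs of $\bE$ depend only on $\rot\btau$.
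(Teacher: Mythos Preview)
Your construction has a genuine gap in the commutation step. The rot-type DOFs you impose from \eqref{eqn:VhDOFs} involve only $r\in\pol_0(T^{\rm ct})\cap L^2_0(T)$, whereas the determining set \eqref{eqn:QDOFs} for $\hQ$ must also see the mean value $\int_T v$ on each $T\in\mct$: any function that is constant on each coarse triangle (and globally mean-zero) lies in $\hQ$ and is annihilated both by the jump DOFs at the singular vertices and by every mean-zero $r$, so jumps plus mean-zero moments are \emph{not} unisolvent on $\hQ$. That missing mean value is recovered for $\rot\bPi_V\btau$ not from a rot-type DOF but from the edge-tangential integrals via Stokes' theorem, $\int_T\rot\bPi_V\btau=\int_{\p T}\bPi_V\btau\cdot\bt$. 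With your assignment $\int_e\bPi_V\btau\cdot\bt=\int_e\bI_h\btau\cdot\bt$ you obtain $\int_T\rot\bPi_V\btau=\int_T\rot\bI_h\btau$, which in general differs from $\int_T\rot\btau$; hence $\rot\bPi_V\btau\neq\rot\btau$ and the claimed identification of \eqref{eqn:QDOFs} with ``the rot-DOFs of $\mbV_h$'' is false.

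The paper's remedy is precisely to set the edge DOFs \eqref{eqn:VhDOFs3} from $\btau$ directly, $\int_e\bPi_V\btau\cdot\bt=\int_e\btau\cdot\bt$ (well-defined since $\btau\in\bH^{\frac12+\delta}(\Omega)$ has an $L^2$ trace on each edge), while only the vertex DOFs come from $\bI_h\btau$. The price is that the edge DOFs of $\bE=\bPi_V\btau-\bI_h\btau$ no longer vanish; they equal $\int_e(\btau-\bI_h\btau)\cdot\bt$, and the local scaling bound for $\|\bE\|_{L^2(T)}$ then picks up a term $h_T\|\btau-\bI_h\btau\|_{L^2(\p T)}^2$, which is absorbed by \eqref{eqn:SZmod}. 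Your attractive simplification ``all geometric DOFs of $\bE$ vanish, so $\|\bE\|_{L^2(T)}\lesssim h_T\|\rot\btau\|_{L^2(\omega(T))}$'' is exactly what cannot survive: making those edge DOFs vanish is incompatible with the commutation you need.
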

\begin{proof}
Fix $\btau\in \hbV$,
and let $\bI_h \btau \in \bpol_1^c(\mct)\cap \bH_0({\rm rot},\Omega)\subset \mbV_h$ be 
the modified Scott--Zhang interpolant of $\btau$ established
in Lemma \ref{lem:SZ}.
We then construct $\bPi_V \btau$ via
the conditions
\begin{subequations}
\label{eqn:FortinDef}
\begin{alignat}{3}
& (\bPi_V \btau)(z) = (\bI_h \btau)(z)\qquad &&\forall z\in \calV_h^I,\\
& (\bPi_V \btau)(z)\cdot \bn = (\bI_h \btau)(z)\cdot \bn\qquad &&\forall z\in \calV_h^B\backslash \calV_h^C,\\
& \int_e (\bPi_V \btau)\cdot \bt = \int_e \btau\cdot \bt \qquad &&\forall e\in \calE_h^I,\\
&\jump{\rot \bPi_V \btau}(z) = \jump{\rot \btau}(z)\qquad &&\forall z\in \calS_h(\PST),\\
& \int_T (\rot \bPi_V \btau) \revj{r} = \int_T (\rot \btau) \revj{r} \qquad &&\forall r\in \rev{{\pol}_0(T^{\rm ct})\cap L^2_0(T)},\ \forall T\in \mct.
\end{alignat}
\end{subequations}
The arguments given in \cite{GLN19}
show that $\rot \bPi_V \btau = \rot \btau$,

By scaling, there holds for each $\bsigma_h\in \mbV_h$ and on each $T\in \mct$,
\begin{align*}
\|\bsigma_h\|_{L^2(T)}^2 
&\le C\Big[  h^{2}_T  \Big(\sum_{z\in \calV_h^I(T)} |\bsigma_h(z)|^2 +\sum_{z\in \calV_h^B(T)\backslash \calV_h^C(T)} |\bsigma_h(z)\cdot \bn|^2\Big)\\
&\qquad + 
 \sum_{e\in \calE_h^I(T)}\Big|\int_e \bsigma_h\cdot \bt\Big|^2 +  h^{2}_T \mathop{\sup_{r\in \mathcal{\pol}_0(T^{\rm ct})}}_{\|r\|_{L^2(T)}=1} \Big| \int_T (\rot \bsigma_h)r\Big|^2\\
&\qquad+h_T^{4}\sum_{z\in \calS_h(T)} |\jump{\rot \bsigma_h}(z)|^2\Big],
\end{align*}
\rev{where $\calS_h(T)$ is set of singular vertices contained in $\bar T$.}
Now set $\bsigma_h = \bPi_V \btau - \bI_h \btau$.  Using the above estimate and \eqref{eqn:FortinDef} 
then yields
\begin{align}\label{eqn:Step1}
\|\bPi_V \btau - \bI_h \btau\|_{L^2(T)}^2 
&\le C\Big[  \Big|\int_{\p T} (\btau-\bI_h \btau)\cdot \bt\Big|^2 +  h^{2}_T \mathop{\sup_{r\in \mathcal{\pol}_0(T^{\rm ct})}}_{\|r\|_{L^2(T)}=1} \Big| \int_T (\rot (\btau-\bI_h \btau)r\Big|^2\\
&\nonum\qquad +h_T^{4}\sum_{z\in \calS_h(T)} |\jump{\rot (\btau-\bI_h \btau)}(z)|^2\Big].
\end{align}

Because $\rot (\btau-\bI_h \btau)\in \hQ$, we use the degrees of freedom
 \eqref{eqn:QDOFs} and a scaling argument to conclude that
\begin{align}
\label{eqn:Step2}
& \mathop{\sup_{r\in \mathcal{\pol}_0(T^{\rm ct})}}_{\|r\|_{L^2(T)}=1} \Big| \int_T (\rot (\btau-\bI_h \btau)r\Big|^2
+h_T^{2}\sum_{\rev{z\in \calS_h(T)}} |\jump{\rot (\btau-\bI_h \btau)}(z)|^2\\
&\nonumber\qquad\le C  \|\rot (\btau-\bI_h \btau)\|_{L^{2}(T)}^2.
\end{align}
\MJN{We then use an inverse estimate %and the stability 
to get
%of the Scott--Zhang interpolant to get
\begin{align}
\label{eqn:Step3}
\|\rot (\btau-\bI_h \btau)\|_{L^{2}(T)}^2
&\le C \big[\|\rot \btau\|_{L^2(T)}^2+ \|\nab \bI_h \btau\|_{L^2(T)}\big]\\
&\nonum\le C\big[ \|\rot \btau\|_{L^2(T)}^2 + h_T^{-1+2\delta}\|\bI_h \btau\|_{H^{\frac{1}{2}+\delta}(T)}^2\big].
%
%&\nonum\le C\big[ \|\rot \btau\|_{L^2(T)}^2 + h_T^{-1+2\delta}\| \btau\|_{H^{\frac{1}{2}+\delta}(\omega(T))}^2\big].
\end{align}
Applying the estimates \eqref{eqn:Step2}--\eqref{eqn:Step3} to \eqref{eqn:Step1}, we obtain
\begin{align*}
\|\bPi_V \btau - \bI_h \btau\|_{L^2(T)}^2 
&\le C\Big[  \Big|\int_{\p T} (\btau-\bI_h \btau)\cdot \bt\Big|^2 
+   h_T^{2}\big(\|\rot \btau\|_{L^2(T)}^2 + h_T^{-1+2\delta}\| \bI_h \btau\|_{H^{\frac{1}{2}+\delta}(\omega(T))}^2\big)\Big]\\
&\le C\big[ h_T \|\btau-\bI_h \btau\|_{L^2(\p T)}^2 + h_T^{1+2\delta }  \|\bI_h \btau\|_{H^{\frac{1}{2}+\delta}(T)}^2
+ h_T^{2} \|\rot \btau\|_{L^2(T)}^2\big].
%
%&\le C\big[h^{1+2\delta}_T \|\btau\|_{H^{\frac{1}{2}+\delta}(\omega(T))}^2+h_T^{2} \|\rot \btau\|_{L^2(T)}^2\big].
%
\end{align*}
We then apply \eqref{eqn:SZmod} and sum over $T\in \mct$ to obtain
%Summing over $T\in \mct$, we obtain
\begin{align*}
\rev{\|\bPi_V \btau - \bI_h \btau\|_{L^2(\Omega)}}
& \le C\big[ h^{\frac{1}{2}+\delta} \|\btau\|_{H^{\frac{1}{2}+\delta}(\Omega)}+h \|\rot \btau\|_{L^2(\Omega)}\big].
\end{align*}
}
Therefore
\begin{align*}
\|\btau - \bPi_V \btau\|_{L^2(\Omega)}
&\le \|\btau-\bI_h \btau\|_{L^2(\Omega)} +\|\bPi_V \btau - \bI_h \btau\|_{L^2(\Omega)}\\
&\le C\big[ h^{\frac{1}{2}+\delta} \|\btau\|_{H^{\frac{1}{2}+\delta}(\Omega)}
+ h \|\rot \btau\|_{L^2(\Omega)}\big].
\end{align*}
\end{proof}

%%%%%%%%%%%%%%%%%%%%%%%%%%%%%%%%%%%%%
%%%%%%%%%%%%%%%%%%%%%%%%%%%%%%%%%%%%%
%%%%%%%%%%%%%%%%%%%%%%%%%%%%%%%%%%%%%
\subsection{Construction of Fortin Operator on Clough--Tocher Splits}\label{sec-CT}
The Clough--Tocher refinement of $\mct$ is obtained
by connecting the barycenter of each $T\in \mct$ with its vertices;
thus, each triangle is split into three triangles.  
In this section, we show that there exists a Fortin projection
mapping onto the Lagrange finite element space satisfying Assumption \ref{assump}.
This result holds for all polynomial degrees $k\ge 2$ but, for simplicity,
we only consider the lowest order case $k=2$.

Let $\CTT$ be the resulting Clough--Tocher refinement of $\mct$,
and define the spaces
\begin{subequations}
\label{eqn:CTSpaces}
\begin{align}
%\Sigma_h
% &= H^2(\Omega)\cap H^1_0(\Omega)\cap \pol_3(\CTT),\\
%&= \{u\in H^2(\Omega)\cap H^1_0(\Omega):\ u|_K\in \pol_3(K)\ \forall K\in \CTT\},\\
%
\mbV_h
& = \bpol^c_2(\CTT)\cap \bH_0({\rm rot},\Omega),\\
%& = \{\bp\in \bH^1(\Omega)\cap \bH_0({\rm rot};\Omega):\ \bp|_K\in \bpol_2(K)\ \forall K\in \CTT\},\\
%
\hQ & = L^2_0(\Omega)\cap \pol_1(\CTT).
%& = \{q\in L^2_0(\Omega):\ q|_K\in \pol_1(K)\ \forall K\in \CTT\}.
\end{align}
\end{subequations}
It is well-known that ${\rm rot}\,\mbV_h \subset \hQ$ \cite{FGN20}.

%%%%%%%%%%%%%%%%%%%%%%
%\begin{theorem}\label{thm:CTExact}
%Suppose that $\Omega$ is simply connected, 
%and let $\Sigma_h,\mbV_h$, and $Q_h$ be defined by \eqref{eqn:CTSpaces}.
%Then the sequence
%\begin{align*}
%0 \lrarrow &\Sigma_h \overset{\grad}{\lrarrow} \mbV_h \overset{\rot}{\lrarrow} Q_h \lrarrow 0
%\end{align*}
%is exact.  
%\end{theorem}
%\begin{remark}
%The analogous three-dimensional result of Theorem \ref{thm:CTExact}
%is given in \cite{FGN20},
%and projections, with smooth range, are also constructed there.  These 3D
%projections can easily be extended to 2D.  
Below we modify the results in \cite{FGN20} to build a Fortin projection that is well-defined on $\bH^{\frac{1}{2}+\delta}(\Omega)$
and has optimal order convergence properties in $\bL^2(\Omega)$.
To this end, we first provide a useful set of degrees of freedom for $\mbV_h$ \cite{FGN20}.
%\end{remark}

%%%%%%%%%%%%%%%%%%%%%%
%%%%%%%%%%%%%%%%%%%%%%
%%%%%%%%%%%%%%%%%%%%%%
\begin{lemma}
A function $\btau\in \mbV_h$ is uniquely determined by the values
\begin{alignat}{2}
&\btau(z)\qquad &&\forall z\in \calV_h^I,\\
&\btau(z)\cdot \bn \qquad &&\forall z\in \calV_h^B\backslash \calV_h^C,\\
&\int_e \btau \qquad &&\forall e\in \calE_h^I,\\
&\int_e \btau\cdot \bn\qquad &&\forall e\in \calE_h^B,\\
&\int_T ({\rm rot}\,\btau)r\qquad &&\forall r\in \pol_1(T^{\rm ct})\cap L^2_0(T),\ \forall T\in \mct,
\end{alignat}
where $\pol_1(T^{\rm ct})$ is defined by \eqref{eqn:LocalCT}.
\end{lemma}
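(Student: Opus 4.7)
The strategy is the standard two-step unisolvence argument: verify that the number of proposed DOFs equals $\dim \mbV_h$, then show that a function in $\mbV_h$ annihilated by all the listed functionals must vanish. For the dimension count, note that the Clough--Tocher refinement adds one vertex and three edges inside each macro triangle, so the scalar space $\pol_2^c(\CTT)$ has $|\calV_h|+|\Eh|+4|\mct|$ nodal degrees of freedom and the vector space has twice as many. Imposing $\btau\cdot\bt=0$ on $\p\Omega$ removes one scalar per non-corner boundary vertex, two scalars per corner, and one scalar per boundary edge midpoint. A routine bookkeeping check shows that $\dim \mbV_h$ agrees with the number of proposed DOFs, so it suffices to establish injectivity.

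Now assume all DOFs of $\btau\in \mbV_h$ vanish. The first step is to show that $\btau$ vanishes on every edge of $\mct$. On interior edges, the endpoint values together with the vector-valued edge integral determine the three nodal values of the quadratic edge polynomial. On boundary edges, the condition $\btau\cdot\bt=0$ (from $\btau\in \bH_0({\rm rot},\Omega)$) combines with the normal DOFs at endpoints and midpoint, plus the observation that at a non-degenerate corner the two tangential conditions force $\btau(z)=0$, to give $\btau|_e=0$. Hence $\btau|_{\p T}=0$ for every $T\in \mct$.

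Fixing $T\in \mct$, the second step is to extract $\rot \btau \equiv 0$ on $T$. Since $\btau|_T$ is continuous and piecewise quadratic on $T^{\rm ct}$, we have $\rot \btau \in \pol_1(T^{\rm ct})$, and Stokes's theorem together with $\btau|_{\p T}=0$ gives $\int_T \rot \btau = \int_{\p T} \btau\cdot \bt = 0$. Thus $\rot \btau$ lies in $\pol_1(T^{\rm ct})\cap L^2_0(T)$, and testing the moment DOF with $r=\rot \btau$ forces $\rot\btau \equiv 0$ on $T$.

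The third and main step is to conclude $\btau|_T\equiv 0$. Since $T$ is simply connected, write $\btau|_T=\nab q$ with $q$ piecewise cubic on $T^{\rm ct}$; continuity of $\btau$ across the three interior edges of $T^{\rm ct}$ matches both tangential and normal derivatives of $q$ there, which, after adjusting additive constants on each subtriangle, lifts to $q\in C^1(T)$. Thus $q$ lies in the $12$-dimensional reduced Clough--Tocher $C^1$ cubic space on $T^{\rm ct}$, whose standard DOFs are $q(v)$ and $\nab q(v)$ at each vertex $v$ of $T$ together with the normal derivatives of $q$ at the midpoints of the edges of $\p T$. Since $\nab q=0$ on $\p T$, the vertex gradient DOFs and the edge-midpoint normal derivative DOFs all vanish, and $q|_{\p T}$ is constant; normalizing that constant to zero kills the vertex value DOFs as well, so $q\equiv 0$ and thus $\btau|_T\equiv 0$. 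The main obstacle is precisely this last step: one must correctly identify $q$ as a globally $C^1$ Clough--Tocher function and then invoke the unisolvence of that classical element to close the argument.
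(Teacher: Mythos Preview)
Your argument is correct and complete; the paper itself does not prove this lemma, but simply states it as a consequence of the degrees of freedom established in \cite{FGN20}. Your direct unisolvence proof---dimension count plus the observation that a rot-free $\btau\in\mbV_h$ vanishing on $\p T$ is the gradient of a $C^1$ piecewise cubic on $T^{\rm ct}$ whose Clough--Tocher DOFs all vanish---is exactly the kind of self-contained argument one would give, and the chain of reductions (edges $\Rightarrow$ $\rot\btau=0$ $\Rightarrow$ $\btau=\nab q$ with $q$ in the HCT space $\Rightarrow$ $q\equiv 0$) is sound.

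One minor terminological slip: the $12$-dimensional $C^1$ piecewise cubic space on $T^{\rm ct}$ with DOFs $q(v_i)$, $\nab q(v_i)$, and $\p_n q(m_i)$ is the \emph{full} Hsieh--Clough--Tocher element, not the reduced one (the reduced element has nine DOFs, obtained by imposing that $\p_n q$ be linear along each edge of $T$). Your argument uses the full element, and correctly so; just adjust the name.
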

%

%%%%%%%%%%%%%%%%%%%%%%
%%%%%%%%%%%%%%%%%%%%%%
%%%%%%%%%%%%%%%%%%%%%% 
\begin{theorem}\label{thm:CTmain}
Let $\mbV_h$ and $\hQ$ be defined by \eqref{eqn:CTSpaces},
and let $\Pi_Q$ be the $L^2$ projection onto $\hQ$.
Then there exists a projection $\bPi_V: \hbV \to \mbV_h$,
%and $\Pi_Q:L^2_0(\Omega)\to Q_h$
such that ${\rm rot}\,\bPi_V \btau = \Pi_Q ({\rm rot}\,\btau)$,
 Moreover,
\[
\|\btau-\bPi_V \btau\|_{L^2(\Omega)}\le C\big( h^{\frac{1}{2}+\delta} \|\btau\|_{H^{\frac{1}{2}+\delta}(\Omega)} + h\|{\rm rot}\,\btau\|_{L^2(\Omega)}\big).
\]
\end{theorem}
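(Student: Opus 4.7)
The plan is to adapt the template of Theorem \ref{thm:PSMainThm} to the Clough--Tocher setting, using the degrees of freedom for $\mbV_h$ recorded in the preceding lemma together with the modified Scott--Zhang interpolant from Lemma \ref{lem:SZ}. Fix $\btau \in \hbV$. Since continuous piecewise linears on $\mct$ are automatically continuous piecewise quadratics on the refinement $\CTT$, the interpolant $\bI_h \btau \in \bpol_1^c(\mct)\cap \bH_0({\rm rot},\Omega)$ lies in $\mbV_h$. I would define $\bPi_V \btau \in \mbV_h$ by copying the nodal DOFs of $\bI_h \btau$ (values at interior vertices and normal components at non-corner boundary vertices), and by matching the edge and rot-moment DOFs to $\btau$:
\begin{equation*}
\int_e \bPi_V \btau = \int_e \btau \quad \forall e\in \calE_h^I, \qquad \int_e \bPi_V \btau\cdot \bn = \int_e \btau\cdot \bn \quad \forall e\in \calE_h^B,
\end{equation*}
and $\int_T ({\rm rot}\,\bPi_V \btau)\,r = \int_T ({\rm rot}\,\btau)\,r$ for all $r\in \pol_1(T^{\rm ct})\cap L^2_0(T)$ and all $T\in \mct$.

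Next I would verify the commutativity ${\rm rot}\,\bPi_V \btau = \Pi_Q({\rm rot}\,\btau)$. Since $\btau\in \hbV$, we have ${\rm rot}\,\btau\in \hQ$, so $\Pi_Q({\rm rot}\,\btau) = {\rm rot}\,\btau$, and because ${\rm rot}\,\bPi_V \btau$ and ${\rm rot}\,\btau$ both belong to $\pol_1(\CTT)$ it suffices to match them on each $T\in \mct$. The moment DOFs pin down the mean-zero part of the difference, while for the constant part Stokes' theorem gives
\begin{equation*}
\int_T {\rm rot}\,\bPi_V \btau = \int_{\p T} \bPi_V \btau \cdot \bt = \sum_{e\in \calE_h^I(T)} \int_e \bPi_V \btau\cdot \bt = \sum_{e\in \calE_h^I(T)} \int_e \btau\cdot \bt = \int_T {\rm rot}\,\btau,
\end{equation*}
where on boundary edges the tangential component vanishes because $\bPi_V \btau$ and $\btau$ both lie in $\bH_0({\rm rot},\Omega)$.

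For the error estimate I would reuse the scaling argument of the Powell--Sabin proof essentially verbatim. A reference-element scaling shows that for every $\bsigma_h\in \mbV_h$ and every $T\in \mct$,
\begin{equation*}
\|\bsigma_h\|_{L^2(T)}^2 \le C\Big[h_T^2\!\! \sum_{z\in \calV_h(T)} |\bsigma_h(z)|_\star^2 + \sum_{e\in \calE_h(T)}\Big|\int_e \bsigma_h\Big|_\star^2 +h_T^2 \mathop{\sup_{r\in \pol_1(T^{\rm ct})}}_{\|r\|_{L^2(T)}=1} \Big|\int_T ({\rm rot}\,\bsigma_h)\,r\Big|^2\Big],
\end{equation*}
where $|\cdot|_\star$ denotes the full vector at interior nodes/edges and the normal component at boundary nodes/edges. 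Applied to $\bsigma_h = \bPi_V \btau - \bI_h \btau$, the vertex contributions vanish by construction, and the remaining edge and rot-moment terms involve only $\btau - \bI_h \btau$. The edge term is controlled by $h_T\|\btau - \bI_h \btau\|_{L^2(\p T)}^2$ and then by $h_T^{1+2\delta}\|\btau\|_{H^{1/2+\delta}(\omega(T))}^2$ via the Scott--Zhang trace bound, while the rot-moment term is handled as in \eqref{eqn:Step2}--\eqref{eqn:Step3}, splitting ${\rm rot}(\btau - \bI_h \btau)$ and invoking an inverse estimate on $\bI_h \btau$. Summation over $T$ and the triangle inequality $\|\btau - \bPi_V \btau\|_{L^2(\Omega)}\le \|\btau-\bI_h \btau\|_{L^2(\Omega)} + \|\bI_h \btau - \bPi_V \btau\|_{L^2(\Omega)}$ produce the claimed bound.

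The main obstacle, I expect, is primarily careful bookkeeping at the boundary: only normal DOFs are available on $\p\Omega$, so one must systematically exploit that the tangential components of both $\btau$ and $\bI_h \btau$ vanish there (guaranteed by $\bH_0({\rm rot},\Omega)$ membership and by the corner-corrected interpolant of Lemma \ref{lem:SZ}) so that the commutativity argument and the scaling bound both reduce cleanly to the interior case. The rest is largely a transcription of the Powell--Sabin argument with the simpler DOF set afforded by the Clough--Tocher split.
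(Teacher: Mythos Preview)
Your proposal is correct and follows essentially the same approach as the paper. The paper's proof simply writes down the defining conditions \eqref{eqn:CTFortinDef} (identical to yours), cites \cite{FGN20} for the commutativity, and then states that the scaling arguments of Theorem~\ref{thm:PSMainThm} yield the $L^2$ bound; you have spelled out explicitly the Stokes-theorem step for the mean value and the elementwise scaling, but the structure is the same.
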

\begin{proof}

%We take $\Pi_Q$ to be the $L^2$-projection onto $Q_h$,
%and  
Define $\bPi_V$ uniquely by the conditions
\begin{subequations}
\label{eqn:CTFortinDef}
\begin{alignat}{3}
& (\bPi_V \btau)(z) = (\bI_h \btau)(z)\qquad &&\forall z\in \calV_h^I,\\
& (\bPi_V \btau)(z)\cdot \bn = (\bI_h \btau)(z)\cdot \bn\qquad &&\forall z\in \calV_h^B\backslash \calV_h^C,\\
& \int_e (\bPi_V \btau) = \int_e \btau \qquad &&\forall e\in \calE_h^I,\\
&\int_e (\bPi_V \btau\cdot \bn) = \int_e \btau \cdot \bn \qquad &&\forall e\in \calE_h^B,\\
&\int_T ({\rm rot}\,\bPi_V \btau)r = \int_T ({\rm rot}\, \btau) r \qquad &&\forall r\in \pol_1(T^{\rm ct})\cap L^2_0(T),\ \forall T\in \mct.
%
%&\int_T \bv\cdot (\grad u)\qquad &&\forall u\in \mathring{\Sigma}(T^{\rm ps}),\ \forall T\in \mct,\\
%
\end{alignat}
\end{subequations}
The arguments given in \cite{FGN20}
show that $\rot \bPi_V \btau = \Pi_Q \rot \btau$.
The same scaling arguments given in Theorem \ref{thm:PSMainThm} show that
 $\|\btau- \bPi_V \btau\|_{L^2(\Omega)}\le C\big( h^{\frac{1}{2}+\delta} \|\btau\|_{H^{\frac{1}{2}+\delta}(\Omega)} + h\|{\rm rot}\,\btau\|_{L^2(\Omega)}\big)$.
\end{proof}

%%%%%%%%%%%%%%%%%%%%%%%%%%%%%%%%%%%%%%%
\subsection{Construction of Fortin Operator on General Triangulations}\label{sec-SV}

In this section, we construct
a Fortin operator for the original Scott--Vogelius pair
developed in \cite{ScottVogelius85}.
This pair essentially takes the space $\mbV_h$ to be
the Lagrange space of degree $k\ge 4$,
and $\hQ$ to be the space of piecewise polynomials
of degree $(k-1)$.   As pointed out in \cite{ScottVogelius85}
the exact definition of these spaces and their stability
is mesh-dependent and depends on the presence
of singular or ``nearly singular'' vertices.  

Recall
that a singular vertex is a vertex in $\mct$ 
that lies on exactly two straight lines.  To make this precise,
for a vertex $z\in \calV_h$, we enumerate the triangles
that have $z$ as a vertex as
 $\mct(z)=\{ T_1, T_2, \ldots T_N \}$. 
If $z$ is a boundary vertex then we enumerate the triangles such that $T_1$ and  $T_N$ 
have a boundary edge.  Moreover, we enumerate them so that $T_j, T_{j+1}$ share an 
edge for $j=1, \ldots N-1$ and  $T_N$ and $T_1$ share an edge in the case $z$ is an interior vertex. Let $\theta_j $ denote the angle between the edges of $T_j$ originating from $z$.   We define 
%\[
%\Gamma(z)= 
%\begin{cases}
%\max \{ |\theta_1+\theta_{2}- \pi|,  \ldots, |\theta_{N-1}+\theta_{N}- \pi|, |\theta_N+\theta_1-\pi|\} & \text{ if }  z\in \calV_h^I\\
%\max \{ |\theta_1+\theta_{2}- \pi|,   \ldots, |\theta_{N-1}+\theta_{N}- \pi| \} & \text{ if }  z\in \calV_h^B\text{ and }N\ge 2,\\
%0 & \text{ if } z\in \calV_h^B\text{ and } N=1,
%\end{cases}
%\]
%and
\begin{equation}
\label{eqn:ThetaDef}
\Theta(z)= 
\begin{cases}
\max \{ |\sin(\theta_1+\theta_{2})|,  \ldots, |\sin(\theta_{N-1}+\theta_{N})|, |\sin(\theta_N+\theta_1)|\} & \text{ if }  z\in \calV_h^I\\
\max \{ |\sin(\theta_1+\theta_{2})|,   \ldots, |\sin(\theta_{N-1}+\theta_{N})| \} & \text{ if }   z\in \calV_h^B\text{ and }N\ge 2,\\
0 & \text{ if } z\in \calV_h^B\text{ and } N=1.
\end{cases}
\end{equation}

%%%%%%%%%%%%%%%%%%%%%
\begin{definition}
A vertex $z \in \calV_h$ is a singular vertex if $\Theta(z)=0$. It is non-singular if $\Theta(z) >0$.
\end{definition}

We denote all the singular vertices by 
\begin{equation*}
\Sh=\{ z \in \calV_h:\ \Theta(z)=0\}.
\end{equation*}
We further let
$\Sh^I$ denote the set of interior singular vertices, $\Sh^B$  the set of boundary singular vertices,
and $\Sh^C$  the set of corner singular vertices.  Equivalently,
\begin{align*}
\Sh^I &= \{z\in \Sh:\ \# \mct(z) = 4\},\\
\Sh^B & = \{z\in \Sh:\ \# \mct(z) \in \{1,2\}\},\\
\Sh^C & = \{z\in \Sh:\ \# \mct(z) =1\}.
\end{align*}

%%%%%%%%%%%%%%%%%%%%%%
%%%%%%%%%%%%%%%%%%%%%%
\begin{definition}
We set 
\begin{equation}\label{eqn:Thetamin}
\Theta_{\min}:= \min_{z\in \calV_h\backslash \calS_h} \Theta(z).
\end{equation}
\end{definition}

For a non-negative integer $k$, we define the spaces
\begin{subequations}
\label{eqn:SVSpaces}
\begin{align}
%\Sigma_h  & = H^2(\Omega)\cap H^1_0(\Omega)\cap \pol_{k+1}(\mct),\\
%&= \{u\in H^2(\Omega)\cap H^1_0(\Omega):\ u|_K\in \pol_{k+1}(T)\ \forall T\in \mct\},\\
%
\mbV_h & = \bpol_k^c(\mct)\cap \bH_0({\rm rot},\Omega),\\
%\{\bp\in \bH^1(\Omega)\cap \bH_0({\rm rot};\Omega),\ \bp|_K\in \bpol_k(T)\ \forall T\in \mct\},\\
%
\hQ & = \{v\in L^2_0(\Omega)\cap \pol_{k-1}(\mct):\ \theta_z(v) = 0\ \forall z\in \calS_h^I,\ v(z)=0\ \forall z\in \calS_h^C\},
\end{align}
\end{subequations}
where we recall that $\theta_z(v)$ is defined by \eqref{eqn:SAlt}.

First we note that the rot operator maps $\mbV_h$ into $\hQ$ \cite{ScottVogelius85}.
%%%%%%%%%%%%%%
%%%%%%%%%%%%%%
%%%%%%%%%%%%%%
\begin{lemma}
There holds ${\rm rot}\,\btau\in \hQ$ for all $\btau\in \mbV_h$.
\end{lemma}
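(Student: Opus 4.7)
The plan is to verify, for an arbitrary $\btau\in \mbV_h$, that $v:=\rot\btau$ satisfies every defining condition of $\hQ$: (a) $v\in \pol_{k-1}(\mct)\cap L^2_0(\Omega)$; (b) $\theta_z(v)=0$ for each $z\in \Sh^I$; and (c) $v(z)=0$ for each $z\in \Sh^C$. Condition (a) is essentially free: since $\btau|_T\in \bpol_k(T)$, automatically $\rot\btau|_T\in \pol_{k-1}(T)$, and integration by parts combined with the tangential boundary condition yields
\[
\int_\Omega \rot\btau\,dx = \int_{\p\Omega}\btau\cdot\bt\,ds = 0.
\]

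For (b), I would enumerate $\mct(z)=\{T_1,T_2,T_3,T_4\}$ counterclockwise around an interior singular vertex $z$ so that the interface between $T_i$ and $T_{i+1}$ lies on one of the two straight lines $\ell_1,\ell_2$ through $z$. Letting $\bt_j$ denote a unit tangent to $\ell_j$ and $\beta=\angle(\bt_1,\bt_2)$, a direct change of coordinates yields the pointwise identity
\[
\rot\btau = \frac{1}{\sin\beta}\bigl[\partial_{\bt_1}(\btau\cdot\bt_2) - \partial_{\bt_2}(\btau\cdot\bt_1)\bigr].
\]
The useful feature is that the continuity of $\btau$ across any edge lying on $\ell_j$ forces the tangential derivative $\partial_{\bt_j}\btau(z)$ to be single-valued across that edge. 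Substituting the identity into $\theta_z(\rot\btau)$ and regrouping the four contributions according to which of $\ell_1,\ell_2$ carries the interface, the alternating sum cancels pairwise to zero.

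For (c), let $T$ be the unique triangle with vertex $z$, and let $e_1,e_2\subset\p\Omega$ be its boundary edges meeting at $z$ with tangents $\bt_1,\bt_2$. Since $\btau\cdot\bt_j=0$ holds as a polynomial identity on the segment $e_j$, it extends to the full line $\ell_j$ containing $e_j$. Hence on $T$ each $\btau\cdot\bt_j$ factors as $L_j R_j$, where $L_j$ is a linear form vanishing on $\ell_j$ and $R_j\in \pol_{k-1}(T)$. Inserting these factorizations into the analogous tangential representation of $\rot\btau$ at $z$, and invoking the corner analysis of \cite{ScottVogelius85}, the evaluation collapses to $\rot\btau(z)=0$.

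\noindent\textbf{Main obstacle.} The substantive bookkeeping is in (b): setting up coordinates adapted to the two lines, applying the identity for $\rot\btau$ in tangential derivatives, identifying precisely which directional derivative of $\btau$ is single-valued across which edge, and then tracking the alternating signs as the four triangle contributions pair up. Step (c) is algebraically more delicate because only the tangential (not full) boundary condition is imposed at the corner; the factored representation $\btau\cdot\bt_j=L_j R_j$ supplies the needed structure, and the conclusion relies on the corner analysis in \cite{ScottVogelius85}.
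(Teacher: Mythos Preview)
The paper does not supply a proof of this lemma; it simply cites \cite{ScottVogelius85}.  Your treatment of (a) and (b) is correct and is the standard Scott--Vogelius argument: the tangential--derivative identity for $\rot$ together with the continuity of $\partial_{\bt_j}\btau$ across edges lying on $\ell_j$ yields the pairwise cancellation in $\theta_z(\rot\btau)$.

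Step (c), however, does not go through.  After your factorization $\btau\cdot\bt_j=L_jR_j$, the tangential identity gives (using $L_j(z)=0$)
\[
\rot\btau(z)=\frac{1}{\sin\beta}\bigl[(\partial_{\bt_1}L_2)\,R_2(z)-(\partial_{\bt_2}L_1)\,R_1(z)\bigr],
\]
and nothing in the $\bH_0(\rot,\Omega)$ boundary condition links $R_1(z)$ to $R_2(z)$.  The corner argument in \cite{ScottVogelius85} is for $\bH^1_0(\Omega)$ velocities, where \emph{both} components vanish on each boundary edge and hence $\btau=L_1L_2\,\bld{S}$ near $z$, forcing every first derivative (and thus $\dive\btau$) to vanish at $z$.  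With only the tangential condition $\btau\cdot\bt=0$ this stronger factorization is unavailable.  A concrete obstruction: on $\Omega=(0,1)^2$ with the two--triangle mesh $T_1=\mathrm{conv}\{(0,0),(1,0),(1,1)\}$, $T_2=\mathrm{conv}\{(0,0),(1,1),(0,1)\}$, the function $\btau$ given by $\btau|_{T_1}=(y(1-y),0)$ and $\btau|_{T_2}=(x(1-y),0)$ lies in $\mbV_h$ for any $k\ge 2$, the vertex $(1,0)$ belongs to $\calS_h^C$, yet $\rot\btau(1,0)=-(1-2y)|_{y=0}=-1$.  Thus the assertion $v(z)=0$ at corner singular vertices cannot be deduced from the $\bH_0(\rot,\Omega)$ data; the constraint in the definition of $\hQ$ appears to have been carried over from the $\bH^1_0$ setting of \cite{ScottVogelius85} and is not inherited by $\rot\mbV_h$ under tangential boundary conditions.
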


Let $\bI_h$ be  Scott--Zhang interpolant onto $\bpol^c_1(\mct)\cap \bH_0({\rm rot};\Omega)\subset \mbV_h$. 
Then define \\
 $\bI_1:\bH^{\frac{1}{2}+\delta}(\Omega)\to \mbV_h$ as follows
\begin{alignat*}{2}
\bI_1\btau (z)&=\bI_h \btau(z) \quad && \forall z\in \calV_h,\\
\int_{e} \bI_1 \btau \cdot \bpsi &= \int_{e} \btau  \cdot \bpsi  \quad && \text{ for all } \bpsi \in \bpol_{k-2}(e), \ \forall e\in \calE_h, \\
\int_{T} \bI_1 \btau \cdot \bpsi &= \int_{T} \btau \cdot \bpsi   \quad && \text{ for all } \bpsi \in \bpol_{k-3}(T),\ \forall T\in \mct.
\end{alignat*}
Standard arguments yield the following result.
%%%%%%%%%%%%%%%%
%%%%%%%%%%%%%%%%
%%%%%%%%%%%%%%%%
\begin{lemma}\label{lem:LagrangeInterp}
There holds for all $\btau\in \bH^{\frac{1}{2}+\delta}(\Omega)$
\begin{equation}\label{lem:LagrangeInterp1}
\|\btau-\bI_1 \btau \|_{L^2(\Omega)} \le C h^{\frac{1}{2}+ \delta} \| \btau\|_{H^{\frac{1}{2}+\delta}(\Omega)}. 
\end{equation}
and
\begin{equation} \label{lem:LagrangeInterp2}
\|{\rm rot}(\bI_1 \btau)\|_{L^2(\Omega)} \le  h^{-\frac{1}{2}+\delta}\|\btau\|_{H^{\frac{1}{2}+\delta}(\Omega)}. 
\end{equation}
Moreover, for $k\ge 2$,
\begin{equation}\label{lem:LagrangeInterp3}
\int_{T} {\rm rot}\, \bI_1 \btau = \int_T {\rm rot}\, \btau\quad \forall T\in \mct.
\end{equation}
\end{lemma}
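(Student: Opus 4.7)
The plan is to prove the three bounds in order, reducing each to the modified Scott--Zhang bound in Lemma \ref{lem:SZ} together with polynomial scaling.

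Part \eqref{lem:LagrangeInterp3} is the cleanest and I would do it first. By Stokes' theorem on each $T\in\mct$, $\int_T {\rm rot}\,\bw = \int_{\p T} \bw\cdot \bt$. Since $k\ge 2$, the constant tangent $\bt|_e$ lies in $\bpol_{k-2}(e)$ on each edge, so the edge-moment condition in the definition of $\bI_1$ gives $\int_e \bI_1\btau\cdot \bt = \int_e \btau\cdot \bt$ for every $e\subset \p T$, and summing over edges yields \eqref{lem:LagrangeInterp3}.

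For \eqref{lem:LagrangeInterp1}, I would write $\btau - \bI_1\btau = (\btau - \bI_h\btau) + (\bI_h\btau - \bI_1\btau)$. The first piece is controlled directly by \eqref{eqn:SZmod}. For the second piece, set $\bsigma_h = \bI_h\btau - \bI_1\btau$; on each $T\in \mct$, $\bsigma_h|_T\in \bpol_k(T)$ vanishes at the three vertices of $T$, has edge moments $\int_e \bsigma_h\cdot\bpsi = \int_e(\bI_h\btau - \btau)\cdot\bpsi$ against $\bpsi\in \bpol_{k-2}(e)$, and interior moments $\int_T \bsigma_h\cdot\bpsi = \int_T(\bI_h\btau - \btau)\cdot \bpsi$ against $\bpsi\in \bpol_{k-3}(T)$. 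A standard mapping to the reference triangle on the finite-dimensional subspace of $\bpol_k(T)$ with zero vertex values yields
\[
\|\bsigma_h\|_{L^2(T)}^2 \le C\bigl(h_T\|\bI_h\btau-\btau\|_{L^2(\p T)}^2 + \|\bI_h\btau-\btau\|_{L^2(T)}^2\bigr).
\]
Combining a scaled trace inequality $\|v\|_{L^2(e)}^2\le C(h_T^{-1}\|v\|_{L^2(T)}^2 + h_T^{2\delta}|v|_{H^{1/2+\delta}(T)}^2)$ with \eqref{eqn:SZmod} applied to both $\btau$ and $\bI_h\btau$ on $T$ and $\omega(T)$ gives the local estimate $\|\bsigma_h\|_{L^2(T)}\le C h_T^{1/2+\delta}\|\btau\|_{H^{1/2+\delta}(\omega(T))}$. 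Squaring and summing over $T$, using the finite overlap of $\{\omega(T)\}$, delivers \eqref{lem:LagrangeInterp1}.

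For \eqref{lem:LagrangeInterp2}, I would again split $\bI_1\btau = \bI_h\btau + (\bI_1\btau - \bI_h\btau)$. For the first summand, $\|\rot \bI_h\btau\|_{L^2(T)}\le C\|\nabla \bI_h\btau\|_{L^2(T)}$, and since $\bI_h\btau|_T\in \bpol_1(T)$ an inverse estimate gives $\|\nabla \bI_h\btau\|_{L^2(T)}\le C h_T^{-1/2+\delta}\|\bI_h\btau\|_{H^{1/2+\delta}(T)}\le C h_T^{-1/2+\delta}\|\btau\|_{H^{1/2+\delta}(\omega(T))}$, by \eqref{eqn:SZmod}. For the second summand, a standard inverse estimate on $\bpol_k(T)$ together with the local bound from Part \eqref{lem:LagrangeInterp1} gives $\|\rot(\bI_1\btau - \bI_h\btau)\|_{L^2(T)}\le C h_T^{-1}\|\bI_1\btau - \bI_h\btau\|_{L^2(T)}\le C h_T^{-1/2+\delta}\|\btau\|_{H^{1/2+\delta}(\omega(T))}$. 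Squaring and summing and taking $h = \max_T h_T$ yields \eqref{lem:LagrangeInterp2}.

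The main technical point is the scaling argument inside Part \eqref{lem:LagrangeInterp1}: one has only fractional control on $\btau$, so the trace on $\p T$ must be handled through the scaled $H^{1/2+\delta}$ trace estimate rather than a full $H^1$ trace, and one must be careful to keep the $h_T^{1/2+\delta}$ factor. Everything else is routine application of Lemma \ref{lem:SZ}, integration by parts, and inverse estimates on polynomials.
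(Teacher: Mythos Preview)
Your proof is correct. The paper omits the argument entirely with the remark ``standard arguments yield the following result,'' and your decomposition through $\bI_h$, the scaling on the Lagrange degrees of freedom, the fractional trace inequality, and the polynomial inverse estimates are precisely the standard ingredients one expects; in particular, the inverse estimate $\|\nabla \bI_h\btau\|_{L^2(T)}\le C h_T^{-1/2+\delta}\|\bI_h\btau\|_{H^{1/2+\delta}(T)}$ you use is the same one the paper itself invokes in the proof of Theorem~\ref{thm:PSMainThm}.
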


%A crucial step to derive the Fortin operator
%is to show a surjective property of the rot operator
%restricted to vertices in the mesh.
The following result  follows from \cite[Lemma 6]{GuzmanScott19}.
%%%%%%%%%%%%%%
%%%%%%%%%%%%%%
%%%%%%%%%%%%%%
\begin{lemma}\label{lem:SurjectiveVertices}
Suppose that $k\ge 4$.
Then there exists an injective linear operator $\bJ_1:\hQ\to  \mbV_h$ such that
\begin{subequations}
\begin{alignat}{2}
{\rm rot} ( \bJ_1 v) (z)&= v\big (z) \quad && \ \forall z\in \calV_h,\\
\int_{T} {\rm rot} ( \bJ_1 v) \, dx &= 0 \quad && \forall T\in \mct,\\
\| \bJ_1 v\|_{L^2(\Omega)} + h \| \nabla  \bJ_1 v\|_{L^2(\Omega)} 
&\le
C h\Big(\frac1{\Theta_{\min}}+1\Big)&& \|v\|_{L^2(\Omega)}.
\end{alignat}
\end{subequations}
\end{lemma}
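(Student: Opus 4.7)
The plan is to reduce the lemma to the vertex-level construction developed in [GuzmanScott19, Lemma 6], and then correct with interior bubbles to enforce the zero-mean condition over each triangle. The construction proceeds in three steps.

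\textbf{Step 1 (vertex lifters).} For each non-singular vertex $z \in \calV_h\setminus \calS_h$ and each $T \in \mct(z)$, I would invoke [GuzmanScott19, Lemma 6] to build a basis function $\bphi_{z,T} \in \mbV_h$ supported in $\mct(z)$, vanishing at every other vertex of $\mct$, with the Kronecker property $\rot(\bphi_{z,T})|_{T'}(z') = \delta_{z,z'}\delta_{T,T'}$ for $z' \in \calV_h$ and $T' \in \mct(z')$. Existence of $\bphi_{z,T}$ reduces to solvability of a local linear system at $z$ whose geometry is encoded by the angles $\theta_1,\ldots,\theta_N$ around $z$; the system is invertible precisely because $\Theta(z) > 0$, and the norm of its inverse is bounded by $C/\Theta(z) \le C/\Theta_{\min}$. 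At interior singular vertices $z \in \Sh^I$, the constraint $\theta_z(v) = 0$ built into $\hQ$ provides the compatibility needed so that a single, lower-dimensional vertex lifter suffices; at corner singular vertices, $v(z) = 0$ by definition of $\hQ$, so no contribution is required. Define the preliminary operator
\begin{equation*}
\tilde\bJ v := \sum_{z\in \calV_h} \sum_{T\in \mct(z)} v|_T(z)\, \bphi_{z,T},
\end{equation*}
which satisfies $\rot(\tilde\bJ v)(z) = v(z)$ at every vertex.

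\textbf{Step 2 (triangle-mean correction).} To enforce $\int_T \rot(\tilde\bJ v) = 0$ on each $T \in \mct$, pick for each $T$ an interior bubble $\bpsi_T \in \mbV_h$ supported in $T$ and vanishing to sufficient order on $\partial T$ so that its trace and all vertex values of $\rot(\bpsi_T)$ are zero, with $\int_T \rot(\bpsi_T) = 1$; a scaled multiple of $b_T\bcurl b_T$, where $b_T$ is the cubic bubble on $T$, works once $k \ge 4$. Setting
\begin{equation*}
\bJ_1 v := \tilde\bJ v - \sum_{T\in \mct} \Bigl(\int_T \rot(\tilde\bJ v)\Bigr)\bpsi_T
\end{equation*}
preserves the vertex condition from Step~1 (since $\rot\bpsi_T$ vanishes at vertices) while enforcing the zero-mean property. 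Injectivity is obtained by augmenting the defining data of $\tilde\bJ v$ with additional interior edge or face moments of $v$ on a supplementary bubble space, at no cost to the preceding two properties.

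\textbf{Step 3 (the bound).} Scaling on the patch $\mct(z)$ yields $\|\bphi_{z,T}\|_{L^2(\mct(z))} \lesssim h/\Theta(z)$ and $\|\nabla \bphi_{z,T}\|_{L^2(\mct(z))} \lesssim 1/\Theta(z)$, the factor $1/\Theta(z)$ coming from the inverse of the local system in Step~1. Combined with the inverse estimate $|v(z)| \lesssim h^{-1}\|v\|_{L^2(\mct(z))}$ and finite overlap of vertex patches, summation over $z \in \calV_h$ gives
\begin{equation*}
\|\tilde\bJ v\|_{L^2(\Omega)} + h\|\nabla \tilde\bJ v\|_{L^2(\Omega)} \le C h\Bigl(\tfrac{1}{\Theta_{\min}}+1\Bigr)\|v\|_{L^2(\Omega)}.
\end{equation*}
The correction in Step~2 is controlled by $\|\rot(\tilde\bJ v)\|_{L^2(\Omega)}$ via standard bubble estimates, which is already subsumed in the previous bound.

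The main obstacle is Step~1: the quantitative solvability of the local vertex system with inverse controlled by $1/\Theta(z)$. This technical core is precisely what [GuzmanScott19, Lemma~6] supplies, so the work here is in checking that the vertex data produced by $v \in \hQ$ is admissible, in particular that the compatibility conditions $\theta_z(v) = 0$ and $v(z) = 0$ for singular vertices line up with the situations excluded from the non-singular construction.
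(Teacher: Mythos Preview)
Your Step~2 contains a genuine obstruction. If $\bpsi_T\in \mbV_h$ is supported in a single triangle $T$ (in particular $\bpsi_T|_{\partial T}=0$), then Stokes' theorem gives $\int_T \rot \bpsi_T = \int_{\partial T} \bpsi_T\cdot\bt = 0$. Hence no interior bubble can have $\int_T \rot\bpsi_T=1$; your candidate $b_T\,\bcurl b_T$ indeed satisfies $\int_T \rot(b_T\,\bcurl b_T)=0$ (and is moreover of degree $5$, so it does not even lie in $\mbV_h$ when $k=4$). The mean-correction step as written therefore cannot work.

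The paper gives no self-contained argument here; it simply invokes \cite[Lemma~6]{GuzmanScott19}. The point you are missing from that construction is that the vertex lifters are built so that the zero-mean property holds \emph{automatically}, with no post-hoc correction. Concretely, the lifters are assembled from edge functions of the form $\bphi_e = p\,\bn_e$ (normal component only), supported on the two triangles sharing $e$, with $p$ vanishing on the remaining edges of those triangles. Since $\bphi_e\cdot\bt\equiv 0$ on every edge of $T$, Stokes gives $\int_T\rot\bphi_e = 0$ for each adjacent $T$, so the second property of the lemma is built in from the start. The vertex values $\rot(\bphi_e)(z)$ are tangential derivatives of $p$ along $e$, and with $k\ge 4$ there are enough edge degrees of freedom to prescribe them at both endpoints independently while keeping $p$ zero at the vertices; solving for the right linear combination around each non-singular vertex is precisely where the local system with inverse bounded by $C/\Theta(z)$ appears, as in your Step~1. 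With this choice of lifters your $\tilde{\bJ}$ already satisfies all three conclusions, Step~2 disappears, and your Step~3 scaling argument then goes through.
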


%%%%%%%%%%%%%%
%%%%%%%%%%%%%%
%%%%%%%%%%%%%%
%
The following result follows from \cite{ScottVogelius85,FalkNeilan13,GuzmanScott19}.
\begin{lemma}\label{lem:LocalSurjectivity}
Define 
\[
\mathring{\calQ}_h  = \{v\in \hQ:\ \int_T v=0\ \forall T \in \mct, \text{ and }  v(z)=0\ \forall z \in \calV_h\}.
\]
Then there exists an injective operator 
$\bJ_2:\mathring{\calQ}_h\to \mbV_h$ such that 
\begin{alignat*}{1}
{\rm rot} (\bJ_2 v)&= v,  \\
\|\bJ_2 v\|_{L^2(\Omega)}+  h \| \nabla \bJ_2 v\|_{L^2(\Omega)}  &\le C h \|v\|_{L^2(\Omega)} . 
\end{alignat*}
\end{lemma}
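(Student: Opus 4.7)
\textbf{Proof plan for Lemma \ref{lem:LocalSurjectivity}.}
The operator $\bJ_2$ is genuinely local: because every $v\in\mathring{\calQ}_h$ vanishes at every vertex and has zero mean on every $T\in\mct$, and because we are free to choose a preimage that vanishes on $\partial T$, the global operator can be built triangle by triangle. Accordingly, I would define $\bJ_2 v$ on each $T$ independently by $\bJ_2 v|_T := \bw_T$, where $\bw_T$ is a carefully chosen element of the bubble space $\mathring{\bpol}_k(T) := \bpol_k(T)\cap\bH_0^1(T)$. Since each $\bw_T$ vanishes on $\partial T$, the glued function lies in $\bH_0^1(\Omega)\subset \bH_0(\rot,\Omega)$, so $\bJ_2 v\in \mbV_h$ automatically.

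The key point is the local surjectivity of the rot operator on $\mathring{\bpol}_k(T)$. If $\bw\in\mathring{\bpol}_k(T)$, then $\bw=0$ on $\partial T$ forces the tangential derivative of both components to vanish along each edge, and hence $\nabla\bw(z)=0$ at every vertex $z$ of $T$; in particular $(\rot\bw)(z)=0$. Combined with $\int_T \rot\bw = \int_{\partial T}\bw\cdot\bt = 0$, this shows $\rot$ sends $\mathring{\bpol}_k(T)$ into
\[
Z_k(T) := \bigl\{q\in \pol_{k-1}(T):\ \textstyle\int_T q = 0,\ q(z)=0\text{ at every vertex }z\text{ of }T\bigr\}.
\]
For $k\ge 4$ a direct dimension count (Scott--Vogelius) shows this map is in fact surjective onto $Z_k(T)$; this is the content of the results in \cite{ScottVogelius85,FalkNeilan13,GuzmanScott19}. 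Because $v\in\mathring{\calQ}_h$ restricted to $T$ lies exactly in $Z_k(T)$, we can pick $\bw_T\in\mathring{\bpol}_k(T)$ with $\rot\bw_T = v|_T$. To obtain a \emph{linear} choice, I would take $\bw_T$ to be the unique preimage orthogonal (in $L^2(T)$) to $\ker(\rot|_{\mathring{\bpol}_k(T)})$; this gives a well-defined linear right inverse, automatically injective as a partial inverse of rot.

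For the norm bound I would pass to a reference triangle $\hat T$ via an affine map of scale $h_T$. On the fixed finite-dimensional reference problem the minimal $L^2$ preimage satisfies $\|\hat\bw\|_{L^2(\hat T)}+\|\nabla\hat\bw\|_{L^2(\hat T)}\le \hat C \|\hat v\|_{L^2(\hat T)}$. Standard scaling identities $\|\bw\|_{L^2(T)}=h_T\|\hat\bw\|_{L^2(\hat T)}$, $\|\nabla\bw\|_{L^2(T)}=\|\nabla\hat\bw\|_{L^2(\hat T)}$, and $\|\rot\bw\|_{L^2(T)}=\|\hat\rot\hat\bw\|_{L^2(\hat T)}$ (where the rot relation contributes an extra $h_T$) then yield
\[
\|\bw_T\|_{L^2(T)} + h_T\|\nabla\bw_T\|_{L^2(T)} \le C h_T \|v\|_{L^2(T)}.
\]
Squaring and summing over $T\in\mct$ gives the claimed global bound. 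Finally, $\bJ_2 v=0$ implies $v = \rot(\bJ_2 v)=0$, which gives injectivity.

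The only substantive step is the local surjectivity $\rot:\mathring{\bpol}_k(T)\twoheadrightarrow Z_k(T)$ for $k\ge 4$; the dimension count matches precisely because the kernel of $\rot$ on $\mathring{\bpol}_k(T)$ consists of gradients of functions in $\lambda_1^2\lambda_2^2\lambda_3^2\,\pol_{k-5}(T)$ (trivial for $k=4$). Since this is exactly the classical Scott--Vogelius local result that the lemma cites, I would simply invoke \cite{ScottVogelius85,FalkNeilan13,GuzmanScott19} here rather than reprove it. The remaining work—choosing the canonical preimage, scaling, and gluing—is entirely routine.
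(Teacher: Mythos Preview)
Your approach is correct and is precisely the construction underlying the references the paper cites; the paper itself offers no proof beyond the sentence ``The following result follows from \cite{ScottVogelius85,FalkNeilan13,GuzmanScott19}.'' Your local bubble construction, the identification of the image $Z_k(T)$ via the vertex and mean constraints, the kernel description $\nabla\bigl(\lambda_1^2\lambda_2^2\lambda_3^2\,\pol_{k-5}(T)\bigr)$, and the scaling argument are exactly the ingredients of the Scott--Vogelius local surjectivity proof, so there is nothing to add.
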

%\begin{proof}
%For $T\in \mct$, we define the local space
%\[
%\mathring{Q}^{k-1}(T) =\mathring{Q}_h\big|_T = \{q\in \pol_{k-1}(T):\ \int_T q \, dx=0,\ q(z)=0\text{ for all vertices $z$ of $T$}\}.
%\]
%A simple counting argument shows that the following complex is exact:
%%
%\begin{center}
%\begin{tikzcd}
%0 \arrow [r,"\subset"]  & {\pol_{k+1}(T)\cap H^2_0(T)}  \arrow [r,"\grad"] & {\bpol_k(T)\cap \bH^1_0(T)} \arrow[r,"{\rm rot}"] & \mathring{Q}^{k-1}(T) \arrow[r,""] & 0.
% \end{tikzcd} 
% \end{center}
%The exactness of the complex implies the existence of an injection $\bJ_{2,T}:\mathring{Q}^{k-1}(T)\to \bpol_k(T)\cap \bH^1_0(T)$ 
%that is uniquely determined by the conditions
% \begin{align*}
% {\rm rot}\,\bJ_{2,T}q &= q,\\
% \int_T (\bJ_{2,T}q)\cdot \grad w & = 0\qquad \forall w\in \pol_{k+1}(T)\cap H^2_0(T).
% \end{align*}
%A scaling argument involving the Piola transform then shows $\|\nab \bJ_{2,T}q\|_{L^2(T)}\le C \|{\rm rot} \bJ_{2,T}\|_{L^2(T)} = C\|q\|_{L^2(T)}$,
%and so by the Poincare inequality $\|\bJ_{2,T} q\|_{L^2(T)} \le C h\|\nab \bJ_{2,T}q\|_{L^2(T)}\le C h\|q\|_{L^2(T)}$.
%
%The mapping $\bJ_2$
%is then defined via $\bJ_2|_T = \bJ_{2,T}$ for all $T\in \mct$.
%It is then immediately that this operator satisfies all of the stated properties.
%\end{proof}

%%%%%%%%%%%%%%
%%%%%%%%%%%%%%
%%%%%%%%%%%%%%
\begin{theorem}\label{thm:SVmain}
Let $\mbV_h$ and $\hQ$ be defined by \eqref{eqn:SVSpaces} with $k\ge 4$.
Then there exists
a projection $\bPi_{V}:\hbV \to \mbV_h$
such that
\begin{equation*}
{\rm rot} (\bPi_{V} \btau)={\rm rot}\,\btau
\end{equation*}
with the following bound
\begin{equation*}
\|\btau-\bPi_{V} \btau\|_{L^2(\Omega)} \le C\big(1+\Theta_{\min}^{-1}\big) h^{\frac{1}{2}+\delta} \|\btau\|_{H^{\frac{1}{2}+\delta}(\Omega)}.
\end{equation*}
\end{theorem}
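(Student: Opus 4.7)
The plan is to construct $\bPi_V\btau$ as a three-stage correction, using, in order, the Lagrange-type quasi-interpolant $\bI_1$ of Lemma \ref{lem:LagrangeInterp}, the vertex corrector $\bJ_1$ of Lemma \ref{lem:SurjectiveVertices}, and the interior-bubble corrector $\bJ_2$ of Lemma \ref{lem:LocalSurjectivity}. Given $\btau\in\hbV$, first set $\bsigma_0:=\bI_1\btau\in\mbV_h$. Because $\rot\mbV_h\subset\hQ$ and $\rot\btau\in\hQ$, the residual $w_1:=\rot\btau-\rot\bsigma_0$ lies in $\hQ$, and by \eqref{lem:LagrangeInterp3} its integral over each $T\in\mct$ vanishes. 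Next, set $\bsigma_1:=\bsigma_0+\bJ_1 w_1$; by Lemma \ref{lem:SurjectiveVertices}, $\rot(\bJ_1 w_1)$ agrees with $w_1$ at every vertex and has zero mean on every triangle, so that $w_2:=\rot\btau-\rot\bsigma_1=w_1-\rot(\bJ_1 w_1)$ lies in $\hQ$, vanishes at every vertex of $\mct$, and has zero mean on each triangle, i.e.\ $w_2\in\mathring{\calQ}_h$. Finally, define
\[
\bPi_V\btau:=\bsigma_1+\bJ_2 w_2=\bI_1\btau+\bJ_1 w_1+\bJ_2 w_2,
\]
and Lemma \ref{lem:LocalSurjectivity} gives $\rot(\bJ_2 w_2)=w_2$, yielding the commuting identity $\rot\bPi_V\btau=\rot\btau$.

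For the $L^2$ bound I would use the triangle inequality
\[
\|\btau-\bPi_V\btau\|_{L^2(\Omega)}\le\|\btau-\bI_1\btau\|_{L^2(\Omega)}+\|\bJ_1 w_1\|_{L^2(\Omega)}+\|\bJ_2 w_2\|_{L^2(\Omega)}
\]
together with the three constructions' elementwise bounds: $\|\btau-\bI_1\btau\|_{L^2}\le Ch^{\frac{1}{2}+\delta}\|\btau\|_{H^{\frac{1}{2}+\delta}}$ from \eqref{lem:LagrangeInterp1}, $\|\bJ_1 w_1\|_{L^2}\le Ch(1+\Theta_{\min}^{-1})\|w_1\|_{L^2}$ from Lemma \ref{lem:SurjectiveVertices}, and $\|\bJ_2 w_2\|_{L^2}\le Ch\|w_2\|_{L^2}$ from Lemma \ref{lem:LocalSurjectivity}. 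An auxiliary step bounds $\|w_2\|_{L^2}\le\|w_1\|_{L^2}+\|\nabla\bJ_1 w_1\|_{L^2}\le C(1+\Theta_{\min}^{-1})\|w_1\|_{L^2}$ using the gradient estimate in Lemma \ref{lem:SurjectiveVertices}.

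The main technical obstacle will be controlling $\|w_1\|_{L^2}=\|\rot\btau-\rot\bI_1\btau\|_{L^2}$ solely in terms of $\|\btau\|_{H^{\frac{1}{2}+\delta}}$. The discrete term $\|\rot\bI_1\btau\|_{L^2}\le Ch^{\delta-\frac{1}{2}}\|\btau\|_{H^{\frac{1}{2}+\delta}}$ is already delivered by \eqref{lem:LagrangeInterp2}, but the continuous term $\|\rot\btau\|_{L^2}$ has no direct $H^{\frac{1}{2}+\delta}$ bound. The key observation is that $\rot\btau\in\hQ$ is piecewise polynomial, so a standard inverse estimate in fractional norms on shape-regular meshes gives $\|\rot\btau\|_{L^2}\le Ch^{\delta-\frac{1}{2}}\|\rot\btau\|_{H^{\delta-\frac{1}{2}}}$, and the continuity of $\rot:H^{\frac{1}{2}+\delta}(\Omega)\to H^{\delta-\frac{1}{2}}(\Omega)$ then yields $\|\rot\btau\|_{L^2}\le Ch^{\delta-\frac{1}{2}}\|\btau\|_{H^{\frac{1}{2}+\delta}}$. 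Hence $\|w_1\|_{L^2}\le Ch^{\delta-\frac{1}{2}}\|\btau\|_{H^{\frac{1}{2}+\delta}}$.

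Multiplication by $h$ absorbs the negative power in the corrector bounds to produce
\[
\|\bJ_1 w_1\|_{L^2(\Omega)}+\|\bJ_2 w_2\|_{L^2(\Omega)}\le C(1+\Theta_{\min}^{-1})\,h^{\frac{1}{2}+\delta}\|\btau\|_{H^{\frac{1}{2}+\delta}(\Omega)},
\]
and combining this with the interpolation bound on $\|\btau-\bI_1\btau\|_{L^2(\Omega)}$ gives the claimed estimate.
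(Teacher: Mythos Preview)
Your construction and argument coincide with the paper's: the paper also sets $\bPi_V\btau=\bI_1\btau+\bJ_1 v_1+\bJ_2 v_2$ with $v_1=\rot(\btau-\bI_1\btau)$ and $v_2=v_1-\rot(\bJ_1 v_1)$, verifies $v_2\in\mathring{\calQ}_h$ exactly as you do, and obtains the commuting identity and the $L^2$ bound by the same triangle-inequality splitting and the same three lemmas. The paper also checks the projection property (if $\btau\in\mbV_h$ then $\bI_1\btau=\btau$, whence $v_1=v_2=0$); you should add this one line.

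The only substantive difference concerns the term $h\|\rot\btau\|_{L^2(\Omega)}$. The paper carries it through the intermediate bounds (its analogues of your $\|\bJ_1 w_1\|$ and $\|\bJ_2 w_2\|$ estimates both contain $h\|\rot\btau\|_{L^2}+h^{\frac{1}{2}+\delta}\|\btau\|_{H^{\frac{1}{2}+\delta}}$) but then writes the final estimate with only $h^{\frac{1}{2}+\delta}\|\btau\|_{H^{\frac{1}{2}+\delta}}$, without explaining how the $\rot$-term is absorbed. You instead insert an explicit justification: since $\rot\btau\in\hQ$ is piecewise polynomial, a negative-order inverse estimate $\|\rot\btau\|_{L^2}\le Ch^{\delta-\frac{1}{2}}\|\rot\btau\|_{H^{\delta-\frac{1}{2}}}$ together with the boundedness of $\rot:H^{\frac{1}{2}+\delta}\to H^{\delta-\frac{1}{2}}$ gives $h\|\rot\btau\|_{L^2}\le Ch^{\frac{1}{2}+\delta}\|\btau\|_{H^{\frac{1}{2}+\delta}}$. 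This step is correct (for $0<s<\tfrac12$ discontinuous piecewise polynomials lie in $H^s(\Omega)$ with $\|v_h\|_{H^s}\le Ch^{-s}\|v_h\|_{L^2}$, and duality yields the negative-norm inverse estimate), and it closes a gap the paper leaves implicit. For the purposes of Assumption~\ref{assump} either form of the bound suffices, so the distinction is cosmetic there; but for the theorem as stated, your argument is the more complete one.
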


%
%Suppose that $\bp \in \bH_0(\curl;\Omega)\cap \bH^{\frac{1}{2}+\delta}(\Omega)$ 
%with $\curl \bp {Q}_h$.  We define 
%
%%
%We then have 
%\begin{equation*}
%{\rm rot} (\Pi_{\bV} \bp)={\rm rot}\,\bp
%\end{equation*}
%%
%with the following bound
%%
%\begin{equation*}
%\|\bp-\bPi_{\bV} \bp\|_{L^2(\Omega)} \le C h^{\frac{1}{2}+\delta} \|\bp\|_{H^{\frac{1}{2}+\delta}(\Omega)}.
%\end{equation*}
%%
%{Finally, $\Pi_V w = w$ for all $w\in \mathring{L}_h^k$.}
%\end{theorem}

\begin{proof}
Define:
\begin{equation*}
\bPi_{V} \btau = \bI_1 \btau+ \bJ_1v_1 +\bJ_2v_2 \in \mbV_h,
\end{equation*}
where 
\begin{equation*}
v_1= {\rm rot}(\btau-\bI_1 \btau)\in \hQ,\qquad v_2= v_1-{\rm rot}(\bJ_1v_1)\in \hQ.
\end{equation*}

%{First, $J_1q_1 \in \mathring{L}_h^k$ and $J_2q_2\in \ringring{L}_h^k\subset \mathring{L}_h^k$, and therefore
%\[
%(\Pi_V w)\cdot t|_{\p\Omega} = (I_1 w)\cdot t|_{\p \Omega}=0.
%\]
%}
By Lemma \ref{lem:SurjectiveVertices} and the definition of $v_2$, 
we see that 
\begin{align*}
v_2(z)=0 \qquad \forall z\in \calV_h,
\end{align*}
and 
\begin{align*}
\int_T v_2 = \int_T \big(v_1 - {\rm rot} (\bJ_1 v_1)\big) = \int_T v_1 = \int_T {\rm rot} (\btau-\bI_1 \btau) = 0,
\end{align*}
by Lemma \ref{lem:LagrangeInterp}.
Therefore $v_2\in \mathring{\calQ}_h$, and so
 $\bJ_2 v_2$ is well-defined (cf.~Lemma \ref{lem:LocalSurjectivity}).

We then use Lemma \ref{lem:LocalSurjectivity} to get
\begin{alignat*}{1}
{\rm rot} (\bPi_{V} \btau)&={\rm rot}(\bI_1 \btau)+ {\rm rot}(\bJ_1v_1)+ {\rm rot} (\bJ_2v_2) \\
&= {\rm rot}(\bI_1 \btau)+{\rm rot}(\bJ_1 v_1)+  v_2\\
&=  {\rm rot}(\bI_1 \btau)+{\rm rot}(\bJ_1v_1)+  (v_1-{\rm rot} (\bJ_1v_1))\\
&=  {\rm rot}(\bI_1 \btau)+v_1\\
&={\rm rot}(\bI_1 \btau)+{\rm rot} (\btau-\bI_1 \btau)\\
&=  {\rm rot}\,\btau.
\end{alignat*}

Now we note that, by \eqref{lem:LagrangeInterp2},
\begin{alignat}{1}
\|{\rm rot} (\btau-\bI_1 \btau)\|_{L^2(\Omega)} &\le  \|\rot \btau\|_{L^2(\Omega)}  +\|\rot( \bI_1 \btau)\|_{L^2(\Omega)}  \nonumber \\
&\le  \|\rot \btau\|_{L^2(\Omega)}  +h^{-\frac{1}{2}+\delta} \|\btau \|_{H^{\frac{1}{2}+\delta}(\Omega)}. \label{841}
\end{alignat}

Next, by Lemma \ref{lem:SurjectiveVertices} and \eqref{841} we obtain
\begin{align}\label{eqn:J1q1Bound}
\|\bJ_1 v_1\|_{L^2(\Omega)}
&\le Ch\Big(\frac1{\Theta_{\min}} +1\Big)\|v_1\|_{L^2(\Omega)}\\
&\nonumber\le Ch\Big(\frac1{\Theta_{\min}} +1\Big)\|{\rm rot} (\btau-\bI_1 \btau)\|_{L^2(\Omega)}\\
&\nonumber\le  C\Big(\frac1{\Theta_{\min}} +1\Big)  (h\|\rot \btau\|_{L^2(\Omega)}  +h^{\frac{1}{2}+\delta} \|\btau \|_{H^{\frac{1}{2}+\delta}(\Omega)}) .
\end{align}

Likewise, we use Lemmas \ref{lem:LocalSurjectivity} and \eqref{841} to obtain
\begin{align}\label{eqn:J2q2Bound}
\|\bJ_2 v_2\|_{L^2(\Omega)}
&\le C h \|v_2\|_{L^2(\Omega)}\\
&\nonumber\le C h \big(\|v_1\|_{L^2(\Omega)} + \|{\rm rot} (\bJ_1 v_1)\|_{L^2(\Omega)}\big)\\
&\nonumber\le C  \big(h\|{\rm rot} (\btau-\bI_1 \btau)\|_{L^2(\Omega)} +\|\bJ_1 v_1\|_{L^2(\Omega)}\big)\\
& \nonumber \le  C\Big(\frac1{\Theta_{\min}} +1\Big)  (h\|\rot \btau\|_{L^2(\Omega)}  +h^{\frac{1}{2}+\delta} \|\btau \|_{H^{\frac{1}{2}+\delta}(\Omega)}).
\end{align}

We then use the triangle inequality, Lemma \ref{lem:LagrangeInterp}, 
\eqref{eqn:J1q1Bound}, and \eqref{eqn:J2q2Bound} to obtain the $L^2$ error estimate:
\begin{alignat*}{1}
\|\btau-\bPi_{V} \btau\|_{L^2(\Omega)}& \le \|\btau-\bI_1 \btau\|_{L^2(\Omega)} + \|\bJ_1v_1\|_{L^2(\Omega)}+ \|\bJ_2v_2\|_{L^2(\Omega)} \\
&  \le C\Big(\frac1{\Theta_{\min}}+1\Big)  h^{\frac{1}{2}+\delta}\|\btau\|_{H^{\frac{1}{2}+\delta}(\Omega)}.
\end{alignat*}

Finally, if $\btau\in \mbV_h$, then $\bI_1 \btau=\btau$ and so $v_1 = 0$.
It then follows that $\bJ_1 v_1 = 0$, and $\bJ_2 v_2 = -\bJ_2 ({\rm rot} (\bJ_1 v_1)) = 0$.
Therefore $\bPi_{V} \btau = \bI_1  \btau= \btau$, i.e., $\bPi_V$ is a projection.
\end{proof}

%%%%%%%%%%%%%%%%%%%%%%%%%%%%
\section{Numerical Experiments}\label{sec-Numerics}
In this section we confirm
the theoretical results with some numerical experiments
on a variety of meshes and finite element spaces. All the numerical experiments were performed using {FEniCS} \cite{fenics}.
In the \revj{first four tests}, we take the domain to be the unit square $\Omega = (0,1)^2$.
The exact eigenvectors, corresponding to non-zero eigenvalues,
 are $\bu^{(n,m)}(x,y):= \bcurl p^{(n,m)}$ where $p^{(n,m)}:= \cos(\pi n x) \cos(\pi m y)$,
with eigenvalues 
$\lambda^{(n,m)}:= \pi^2 (n^2+m^2)$ for  $n,m\in \mathbb{N}\cup \{0\}$ and $nm\neq 0$.
In the following we relabel 
the non-zero eigenvalues $\lambda^{(i)}$ in non-decreasing order: $0 < \lambda^{(1)} \le  \lambda^{(2)} \le \lambda^{(3)}\le \cdots$

%%%%%%%%%%%%%%%%%%%%%%%%%%%%%%%%%%%%%%
\subsection{Linear Lagrange elements on Powell--Sabin triangulations}
In these series of tests, we compute the finite element method
\eqref{fem} using piecewise linear Lagrange elements defined
on Powell--Sabin triangulations.
%Recall that the Powell--Sabin refinement (or triangulation)
%of a simplicial mesh $\mct$ splits each triangle in $\mct$
%into six simplices (cf.~Section \ref{sec-PS} and Figure \ref{fig:PS}.
We create a sequence of generic Delaunay triangulations $\mct$ with mesh size $h_j = 2^{-j}$ for $j=3, 4, 5, 6$,
and perform the refinement algorithm described in Section \ref{sec-PS} to 
obtain a Powell--Sabin triangulation $\PST$ for each mesh parameter.
%(see Figure \ref{fig:PS8}). 

In Table \ref{table1}, we show the first ten non-zero  
approximate eigenvalues and errors using method \eqref{fem} defined on $\PST$
for fixed $h=1/32$. In Table \ref{table2}, we list the rate of convergence of the first eigenvalue
with respect to $h$.  The tables show an absence of spurious eigenvalues which agrees
with the theoretical results, Theorems \ref{mainthm} and \ref{thm:PSMainThm}.
In addition, we observe an asymptotic quadratic rate of convergence for the computed eigenvalue.
%\begin{figure}
%\includegraphics[scale=0.5]{}
%\caption{A Powell--Sabin triangulation with $h = 1/8$}
%\label{fig:PS8}
%\end{figure}

\begin{table}[h]
\begin{center}
\begin{tabular}{|c|c|c|}\hline
   $i$ & $\lambda^{(i)}_h$& $|\lambda^{(i)}-\lambda^{(i)}_h|$ \\
\hline
1& 9.872556542826 & 2.952141736802360E-3 \\
2& 9.872647617226 & 3.043216136799032E-3 \\
3& 19.75126057536 & 1.205177318315975E-2 \\
4& 39.52514303832 & 4.672543396706175E-2 \\
5& 39.52979992791 & 5.138232355238159E-2 \\
6& 49.42354393173 & 7.552192628650545E-2 \\
7& 49.43033089264 & 8.230888719544538E-2 \\
8& 79.15457141878 & 1.977362100693938E-1  \\
9& 89.06160447391 & 2.351648641029839E-1 \\
10&89.07453060702& 2.480909972125715E-1\\
%
%1& 9.87255654282616    & 0.0029521417368023606 \\
%2& 9.872647617226157   & 0.003043216136799032 \\
%3& 19.751260575361876 & 0.012051773183159753 \\
%4& 39.525143038324494 & 0.04672543396706175 \\
%5& 39.529799927909814 & 0.05138232355238159 \\
%6& 49.4235439317333    & 0.07552192628650545 \\
%7 & 49.43033089264224  & 0.08230888719544538 \\
%8& 79.15457141878426  & 0.1977362100693938  \\
%9 & 89.06160447390721  & 0.23516486410298398 \\
%10 & 89.0745306070168    & 0.24809099721257155 \\
\hline
\end{tabular}
\end{center}
\vspace{0mm}
\caption{Approximate eigenvalues of \eqref{fem}
using the piecewise linear Lagrange finite element space
on a  Powell--Sabin triangulation.  The mesh parameter is $h = 2^{-5}$.}
\label{table1}
\end{table}

\begin{table}[h]
\begin{center}
\begin{tabular}{|c|c|c|}\hline
    $h$ & $|\lambda^{(1)}-\lambda_h^{(1)}|$ & rate \\
\hline
   $2^{-3}$  &  1.084194558097806E-1   &   \\ 
    $2^{-4}$ &  3.835460507298371E-2 &   1.8228 \\  
    $2^{-5}$ &  2.952141736802360E-3  &  1.8768 \\
    $2^{-6}$ &  7.488421347368046E-4   & 1.9790 \\
\hline
\end{tabular}
\end{center}
\vspace{0mm}
\caption{The rate of convergence 
with respect to $h$ of first non-zero eigenvalue using for Powell--Sabin split and the linear Lagrange finite element space.} \label{table2}
\end{table}

%%%%%%%%%%%%%%%%%%%%%%%%%%%%%%%%%%%%%%%
%%%%%%%%%%%%%%%%%%%%%%%%%%%%%%%%%%%%%%%
\subsection{Quadratic Lagrange elements on Clough--Tocher triangulations}
In this section, we compute the finite element method \eqref{fem}
using quadratic Lagrange elements defined on Clough--Tocher triangulations (cf.~Section \ref{sec-CT}).
As before, we create a sequence of meshes $\mct$ with $h_j = 2^{-j}\ (j=3,4,5,6)$,
and construct the Clough--Tocher refinement $\CTT$ by connecting the vertices
of each triangle in $\mct$ with its barycenter; see Figure \ref{fig:TC8}.

\begin{figure}
\begin{center}
\includegraphics[scale=0.5]{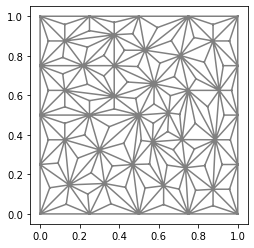}
\caption{A Clough--Tocher triangulation with $h =2^{-3}$.}
\end{center}
\label{fig:TC8}
\end{figure}

In Table \ref{table3} we report the first computed
 ten non-zero approximate eigenvalues using method \eqref{fem}.  
As predicted by Theorems \ref{mainthm} and  \ref{thm:CTmain}, 
the results show accurate approximations
with no spurious eigenvalues.   
 In Table \ref{table4} we list the rate of convergence to the first eigenvalue
 for different values of $h$.  The table clearly shows 
an asymptotic quartic rate of convergence: $|\lambda^{(1)}-\lambda_h^{(1)}| = \mathcal{O}(h^4)$.

\begin{table}[h]
\begin{center}
\begin{tabular}{|c|c|c|}\hline
    $i$ & $\lambda^{(i)}_h$& $|\lambda^{(i)}-\lambda_h^{(i)}|$ \\
\hline
1 & 9.869606458779 & 2.057689641788E-6 \\
2 & 9.869606625899 & 2.224809986018E-6 \\ 
3 & 19.73922733515 & 1.853298115861E-5 \\
4 & 39.47853970719 & 1.221028349079E-4 \\
5 & 39.47855143244 & 1.338280896661E-4 \\
6 & 49.34827341503 & 2.514095869017E-4 \\
7 & 49.34829772352 & 2.757180775106E-4 \\
8 & 78.95794423573 & 1.109027018615E-3 \\
9 & 88.82788915584 & 1.449546038714E-3 \\
10&88.82798471962 & 1.545109821734E-3 \\
%1 & 9.869606458779 &  2.0576896417878743e-06 \\
%2 & 9.869606625899344 &  2.224809986017817e-06 \\ 
%3 & 19.739227335159875 & 1.8532981158614348e-05 \\
%4 & 39.47853970719234 & 0.00012210283490787788 \\
%5 & 39.4785514324471 &  0.00013382808966611037 \\
%6 & 49.348273415033695 & 0.00025140958690172965 \\
%7 & 49.348297723524304 &  0.0002757180775105894 \\
%8 & 78.95794423573348 &  0.0011090270186144835 \\
%9 & 88.82788915584294 &  0.0014495460387138337 \\
%10& 88.82798471962596 & 0.0015451098217340586 \\
\hline
\end{tabular}
\end{center}
\vspace{0mm}
\caption{Approximate eigenvalues using 
quadratic Lagrange elements on a Clough--Tocher triangulation with $h = 2^{-5}$.}\label{table3}
\end{table}

\begin{table}[h]
\begin{center}
\begin{tabular}{|c|c|c|}\hline
    $h$ & $|\lambda^{(1)}-\lambda_h^{(1)}|$ & rate \\
\hline 
$2^{-3}$    &   2.98012061403341E-4    &    \\ 
$2^{-4}$    &   2.96722579697928E-5    & 3.3282\\  
$2^{-5}$   &    2.05768964178787E-6  	& 3.8500   \\
$2^{-6}$   &    1.43249797801559E-7    & 3.8444  \\
\hline
\end{tabular}
\end{center}
\vspace{0mm}
\caption{The rate of convergence  of first non-zero eigenvalue using  the Clough--Tocher split and $k=2$} \label{table4}
\end{table}

%%%%%%%%%%%%%%%%%%%%%%%%%%%%%%%%
%%%%%%%%%%%%%%%%%%%%%%%%%%%%%%%%
%%%%%%%%%%%%%%%%%%%%%%%%%%%%%%%%

\subsection{Quartic Lagrange elements on criss-cross triangulations}
In this section we compute the finite element method \eqref{fem}
using fourth degree Lagrange elements on several types of triangulations.
Theorems \ref{mainthm} and \ref{thm:SVmain} indicate
that this scheme leads to convergent eigenvalue approximations
as $h\to 0$ if the quantity $\Theta_{\min}$ is uniformly bounded from below.
We recall that the quantity $\Theta_{\min}$
gives a measurement of the closest to singular vertex in the mesh, i.e.,
$\Theta_{\min}$ is small if there exists a vertex in $\mct$
that falls on two ``almost'' straight lines; see \eqref{eqn:Thetamin} and
\eqref{eqn:ThetaDef} for the precise definition.

In the first series of tests, we numerical study the effect
of $\Theta_{\min}$ in the finite element method \eqref{fem}.
To this end, we first take $\mct$ to be 
the criss-cross mesh with $h=1/6$ (cf.~Figure \ref{fig:cc61}).
This triangulation has $36$ singular vertices, but
$\Theta_{\min}$ is well-behaved.   Theorems 
\ref{mainthm} and \ref{thm:SVmain} indicate
that the finite element scheme \eqref{fem} (with quartic Lagrange elements)
leads to accurate approximations.  Indeed, 
Table \ref{table5} lists the first ten computed non-zero eigenvalues,
and it clearly shows accurate results.

Next, we perform the same tests but randomly perturb 
each singular vertex of the criss-cross mesh by 
a factor $\alpha h$ for some $\alpha\in (0,1]$.  In particular, for each singular vertex $z\in \calS_h$
of the criss-cross triangulation $\mct$, we make the perturbation $z\to z+(\pm \alpha h,\pm \alpha h)$.
Figures \ref{fig:cc61}(right), \ref{fig:cc62}(left), and \ref{fig:cc62}(right)
show the resulting triangulations with $\alpha = 0.01$, $\alpha=0.05$, and $\alpha=0.1$, respectively.
We note that on the resulting perturbed mesh, $\Theta_{\min}\approx \alpha$,
and therefore Theorem \ref{thm:SVmain} suggests that the finite element approximation
\eqref{fem} may suffer for small $\alpha$-values.

The computed eigenvalues, with values $\alpha=0.01$, $\alpha=0.05$,
and $\alpha=0.1$, are reported in Tables \ref{table6}, \ref{table7}, and \ref{table8}, 
respectively.  Table \ref{table8} shows that, for relatively large perturbations ($\alpha=0.1$),
we compute relatively accurate eigenvalue approximations with similar convergence
properties found on the criss-cross mesh (cf.~Table \ref{table5}).
On the other hand, for smaller perturbations ($\alpha=0.05$ and $\alpha=0.01$),
the results drastically differ.  Table \ref{table6} clearly show
extremely poor approximations for all eigenvalues,
and Table \ref{table7} only computes the first few eigenvalues
with reasonable accuracy before the results deteriorate.  These numerical tests indicate the approximation
properties of the computed eigenvalues are highly sensitive to the quantity $\Theta_{\min}$.

\begin{figure}
\begin{center}
\includegraphics[scale=0.4]{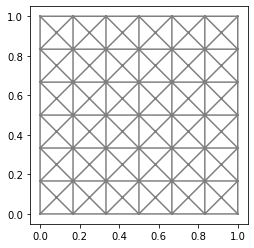} \quad 
\includegraphics[scale=0.4]{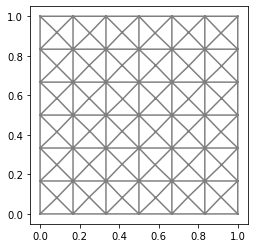} 
\caption{Left: Criss-cross mesh with $h = 1/6$.  Right:
The mesh obtained by randomly perturbing the singular vertices
of the criss-cross mesh by $0.01h$.}
\end{center}
\label{fig:cc61}
\end{figure}

\begin{figure}
\begin{center}
\includegraphics[scale=0.4]{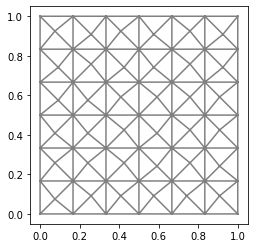} \quad 
\includegraphics[scale=0.4]{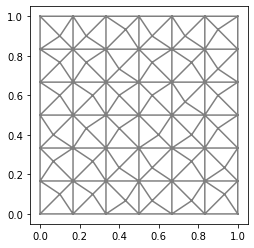} 
\caption{Criss-cross meshes with singular vertices randomly perturbed by $0.05 h$ (left)
and $0.1h$ (right).}
\end{center}
\label{fig:cc62}
\end{figure}

\begin{table}[h]
\begin{center}
\begin{tabular}{|c|c|c|}\hline
    $i$ & $\lambda^{(i)}_h$& $|\lambda^{(i)}-\lambda_h^{(i)}|$ \\
\hline
1 & 9.869604401309 & 2.199112003609E-10 \\
2 & 9.869604401309 & 2.200408744102E-10 \\ 
3 & 19.73920880459 & 2.414715538634E-09 \\
4 & 39.47841782951 & 2.251546860066E-07 \\ 
5 & 39.47841782951 & 2.251547499554E-07 \\ 
6 & 49.34802238840 & 3.829525141441E-07 \\ 
7 & 49.34802238840 & 3.829534165334E-07 \\ 
8 & 78.95683762620 & 2.417486058448E-06 \\ 
9 & 88.82645223886 & 1.262905662713E-05 \\
10&88.82645223886 & 1.262905958299E-05 \\ 
%1 & 9.86960440130927 & 2.199112003609116e-10 \\
%2 & 9.869604401309399 & 2.2004087441018783e-10 \\ 
%3 & 19.73920880459343 & 2.414715538634482e-09 \\
%4 & 39.47841782951212 & 2.2515468600659005e-07 \\ 
%5 & 39.47841782951218 & 2.2515474995543627e-07 \\ 
%6 & 49.34802238839931 & 3.8295251414410814e-07 \\ 
%7 & 49.34802238840021 & 3.8295341653338255e-07 \\ 
%8 & 78.95683762620092 & 2.4174860584480484e-06 \\ 
%9 & 88.82645223886085 & 1.262905662713365e-05 \\
%10&88.82645223886381 & 1.2629059582991431e-05 \\ 
\hline
\end{tabular}
\end{center}
\vspace{0mm}
\caption{Approximate eigenvalues using quartic Lagrange elements on a criss-cross mesh with $h=1/6$.}\label{table5}
\end{table}

\begin{table}[h]
\begin{center}
\begin{tabular}{|c|c|c|}\hline
    $i$ & $\lambda^{(i)}_h$& $|\lambda^{(i)}-\lambda_h^{(i)}|$ \\
\hline
1 & 1.424154538647 & 8.445449862442 \\ 
2 & 1.471404605901 & 8.398199795188 \\
3 & 1.477776343297 & 18.26143245888 \\
4 & 1.502342236815 & 37.97607536754 \\ 
5 & 1.526468793982 & 37.95194881038 \\ 
6 & 1.540736126805 & 47.80728587864 \\
7 & 1.552154885100 & 47.79586712035 \\ 
8 & 1.556952619119 & 77.39988258960 \\ 
9 & 1.566640464185 & 87.25979914562 \\ 
10&1.580713040988 & 87.24572656882 \\
%1 & 1.424154538647274 & 8.445449862442084 \\ 
%2 & 1.471404605900938 & 8.39819979518842 \\
%3 & 1.4777763432967959 & 18.26143245888192 \\
%4 & 1.5023422368146768 & 37.976075367542755 \\ 
%5 & 1.5264687939824224 & 37.95194881037501 \\ 
%6 & 1.540736126805152 & 47.80728587864164 \\
%7 & 1.5521548850999665 & 47.79586712034683 \\ 
%8 & 1.5569526191193752 & 77.39988258959549 \\ 
%9 & 1.5666404641852711 & 87.25979914561896 \\ 
%10 & 1.580713040987817 & 87.24572656881641 \\
\hline
\end{tabular}
\end{center}
\vspace{0mm}
\caption{Approximate eigenvalues using 
quartic Lagrange elements on a $0.01h$-perturbed criss-cross mesh with $h = 1/6$.}\label{table6}
\end{table}

\begin{table}[h]
\begin{center}
\begin{tabular}{|c|c|c|}\hline
    $i$ & $\lambda^{(i)}_h$& $|\lambda^{(i)}-\lambda_h^{(i)}|$ \\
\hline
1 & 9.869604401311 & 2.212932059820E-10  \\
2 & 9.869604401311 & 2.215134742301E-10 \\ 
3 & 19.73920880479 & 2.614239491550E-09 \\ 
4 & 35.63498774612 & 3.843429858239 \\
5 & 36.48359498561 & 2.994822618752 \\ 
6 & 36.92351459416 & 12.42450741128 \\ 
7 & 37.63299206644 & 11.71502993900 \\ 
8 & 37.78514981304 & 41.17168539568 \\ 
9 & 38.10084364520 & 50.72559596460 \\
10&38.35191236801 & 50.47452724179 \\ 
%1 & 9.869604401311 & 2.21293205981965e-10  \\
%2 & 9.869604401311 & 2.2151347423005063e-10 \\ 
%3 & 19.73920880479 & 2.614239491549597e-09 \\ 
%4 & 35.63498774612 & 3.843429858238757 \\
%5 & 36.48359498560509 & 2.9948226187523446 \\ 
%6 & 36.92351459416301 & 12.424507411283784 \\ 
%7 & 37.632992066444864 & 11.71502993900193 \\ 
%8 & 37.78514981303699 & 41.171685395677876 \\ 
%9 & 38.10084364520075 & 50.72559596460348 \\
%10 & 38.351912368013586 & 50.47452724179064 \\ 
\hline
\end{tabular}
\end{center}
\vspace{0mm}
\caption{Approximate eigenvalues using 
quartic Lagrange elements on a $0.05h$-perturbed criss-cross mesh with $h = 1/6$.}\label{table7}
\end{table}

\begin{table}[h]
\begin{center}
\begin{tabular}{|c|c|c|}\hline
    $i$ & $\lambda^{(i)}_h$& $|\lambda^{(i)}-\lambda_h^{(i)}|$ \\
\hline
1 & 9.869604401320 & 2.310134306071E-10 \\ 
2 & 9.869604401320 & 2.312834368468E-10 \\
3 & 19.73920880546 & 3.285371974471E-09 \\ 
4 & 39.47841784038 & 2.360199999885E-07 \\ 
5 & 39.47841784071 & 2.363495781310E-07 \\ 
6 & 49.34802242662 & 4.211773898533E-07 \\
7 & 49.34802246288 & 4.574410894520E-07 \\ 
8 & 78.95683842488 & 3.216167357323E-06 \\ 
9 & 88.82645270371 & 1.309390694360E-05 \\
10&88.82645276747 & 1.315766178323E-05 \\ 
\hline
\end{tabular}
\end{center}
\vspace{0mm}
\caption{Approximate eigenvalues using 
quartic Lagrange elements on a $0.1h$-perturbed criss-cross mesh with $h = 1/6$.}\label{table8}
\end{table}

\subsection{Quartic Lagrange elements on generic triangulations}
Our next series of tests
compute the finite element method \eqref{fem}
using quartic Lagrange elements on generic Delaunay triangulations.
Again, Theorem \ref{thm:SVmain} and the previous set of tests
indicate the approximation properties of the computed eigenvalues
are highly sensitive to the quantity $\Theta_{\min}$.
In light of this, for a given (generic) triangulation $\mct$
we randomly move each interior vertex
with four neighboring triangles by a $0.1h$-perturbation; see Figure \ref{fig:cc61B}.

Table \ref{tab:GSVP} shows the maximum errors of the first $20$
computed eigenvalues %using  the finite element method \eqref{fem}
on these perturbed mesh for $h = 2^{-j}\ (j=2,3,4,5)$.  The table clearly 
shows convergence with rate $\mathcal{O}(h^8)$.
On the other hand, the errors of the computed eigenvalues on
`non-perturbed' meshes do not converge as shown in Table~\ref{tab:GSVNonP}. 

It is interesting to note that Costabel and Dauge \cite{CoDa02} showed that using quartics one has convergence on  any mesh if they use they add a $(\dive, \dive)$ stabilization term to the formulation (at least for convex polygons). However, here we see that the results are more sensitive with the formulation \eqref{weak}.

\begin{figure}
\begin{center}
\includegraphics[scale=0.4]{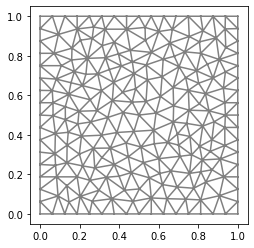} \quad 
\includegraphics[scale=0.4]{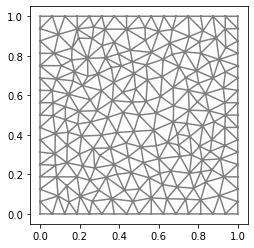} 
\caption{(left) Unstructured mesh with $h \approx 1/10$, (right) randomly perturbing interior vertices who have four triangles by at most $.1h$}
\end{center}
\label{fig:cc61B}
\end{figure}

%\begin{table}[h]
%\begin{center}
%\begin{tabular}{|l|l|l|}\hline
%    i & $\lambda^{(i)}$& error \\
%\hline
%1 & 9.869604401085496 & 3.8617997688561445e-12 \\
%2 & 9.869604401085668 & 3.68949315543432e-12 \\ 
%3 & 14.889012708009828 & 4.850196094168888 \\ 
%4 & 19.73920880227837 & 19.739208802079062 \\ 
%5 & 25.513025094043115 & 13.965392510314317 \\ 
%6 & 39.47841760776456 & 9.869604397682231 \\
%7 & 39.47841760785851 & 9.869604397588283 \\ 
%8 & 49.34802201512365 & 29.608813193591217 \\ 
%9 & 49.34802201590597 & 39.47841759389826 \\
%10 & 78.95683531117726 & 9.869604298626967 \\ 
%\hline
%\end{tabular}
%\end{center}
%\vspace{0mm}
%\caption{Unstructured grid with $h \approx 1/10$, k=4}\label{table9}
%\end{table}
%
%\begin{table}[h]
%\begin{center}
%\begin{tabular}{|l|l|l|}\hline
%    i & $\lambda^{(i)}$& error \\
%\hline
%1 & 9.869604401085713 & 3.645084234449314e-12 \\ 
%2 & 9.869604401086786 & 2.5721647034515627e-12 \\ 
%3 & 19.73920880227995 & 1.0123457627742027e-10 \\ 
%4 & 39.47841760779323 & 3.435800977058534e-09 \\
%5 & 39.478417607882626 & 3.5251943586445122e-09 \\
%6 & 49.34802201518647 & 9.739679285303282e-09 \\ 
%7 & 49.3480220159192 & 1.0472405165273813e-08 \\ 
%8 & 78.95683531273518 & 1.0402031591638661e-07 \\ 
%9 & 88.82643978504626 & 1.7524203599350585e-07 \\ 
%10 & 88.82643979619262 & 1.8638839094364812e-07 \\ 
%\hline
%\end{tabular}
%\end{center}
%\vspace{0mm}
%\caption{Unstructured grid with $h \approx 1/10$, k=4, but perturbing all interior vertices who have four triangles by at most $.1h$}\label{table10}
%\end{table}

\begin{table}[h]
\begin{center}
\begin{tabular}{|c|c|c|}\hline
    $h$ & $\max_{1\le i\le 20}|\lambda^{(i)}-\lambda_h^{(i)}|$ & rate \\
\hline 
$2^{-2}$ &   8.38611345105E-03 & \\
$2^{-3}$    &5.61831120933E-05 & 7.2217\\
$2^{-4}$    &59.2176263988 & -20.008\\
$2^{-5}$   & 59.2176264065 & 0.000\\
\hline
\end{tabular}
\end{center}
\vspace{0mm}
\caption{Maximum error of the first $20$ eigenvalues
on (non-perturbed) Delaunay triangulations using quartic Lagrange elements.
Note that for $h=2^{-2}$ and $h=2^{-3}$, the mesh $\mct$
does not have any vertices with four neighboring triangles.}\label{tab:GSVNonP}
\end{table}

\begin{table}[h]
\begin{center}
\begin{tabular}{|c|c|c|}\hline
    $h$ & $\max_{1\le i\le 20}|\lambda^{(i)}-\lambda_h^{(i)}|$ & rate \\
\hline 
$2^{-2}$ &    8.3861134511E-03  &\\
$2^{-3}$    & 5.6183112093E-05 & 7.2217\\
$2^{-4}$    & 2.2360291041E-07 & 7.9731\\
$2^{-5}$   &  8.9832496997E-10 & 7.9595\\
\hline
\end{tabular}
\end{center}
\vspace{0mm}
\caption{Maximum error of the first $20$ eigenvalues
on perturbed Delaunay triangulations using quartic Lagrange elements.}
\label{tab:GSVP}
\end{table}

\revj{
\subsection{L-Shaped Domains}
In this example, we consider an $L$-shaped domain: \\ $\Omega=[-\pi, \pi]^2 \backslash \Big([0, \pi] \times [-\pi, 0] \Big)$ . The first non-zero eigenvalue corresponds to an eigenvector that is not in $H^1$ and the approximate value of this eigenvalue is given by $\lambda^{(1)} \approx 0.149511749824251$ \cite{DaugeBenchMark}.  In Table \ref{tableL4} we give the error using Lagrange elements with $k=1$ on Powell-Sabin splits. In Table \ref{tableL5} we give the error using N\'ed\'elec elements of the second kinds with $k=1$ on the same meshes. As we can see, the rate of convergence seems to be tending to $4/3$ for both finite elements. The next eigenvalues correspond to eigenvectors that belong to $H^1$
and the convergence rates increase to $2$ for both elements, but we do not present the errors here.

We would like to stress that although the eigenvalues do converge as we proved, the convergence of the eigenvectors   will not converge in $H(\text{div}) \cap H(\text{curl})$ if the eigenfunctions are not in $H^1$. Instead convergence should be sought in the $H(\text{curl})$ norm. 
}

\begin{table}[h]
\begin{center}
\begin{tabular}{|c|c|c|}\hline
    $h$ & $|\lambda^{(1)}-\lambda_h^{(1)}|$ & rate \\
\hline 
$2^{-3}$    &       5.29957E-03  &    \\ 
$2^{-4}$    &     2.42718E-03   & 1.12659499\\  
$2^{-5}$   &        1.07087E-03	&  1.18049541   \\
$2^{-6}$   &        5.5788E-04  & 0.94076660  \\
$2^{-7}$   &       1.8099E-04    &   1.62402935    \\
$2^{-8}$   &       7.273E-05   & 1.31537097 \\
\hline
\end{tabular}
\end{center}
\vspace{0mm}
\caption{$L$-shaped domain: The rate of convergence  of first non-zero eigenvalue using  the Powell-Sabin split and $k=1$} \label{tableL4}
\end{table}

\begin{table}[h]
\begin{center}
\begin{tabular}{|c|c|c|}\hline
    $h$ & $|\lambda^{(1)}-\lambda_h^{(1)}|$ & rate \\
\hline 
$2^{-3}$    &       9.564E-05    &    \\ 
$2^{-4}$    &      6.285E-05  & 0.60567278 \\  
$2^{-5}$   &    3.039E-05  	&   1.04839118   \\
$2^{-6}$   &      1.763E-05 & 0.78534429 \\
$2^{-7}$   &        5.72E-06  &   1.62460510   \\
$2^{-8}$   &       2.41E-06    & 1.24527844 \\
\hline
\end{tabular}
\end{center}
\vspace{0mm}
\caption{$L$-shaped domain:  The rate of convergence  of first non-zero eigenvalue using  the N\'ed\'elec elements of the second kind and $k=1$} \label{tableL5}
\end{table}

\rev{
\section{Concluding Remarks}\label{sec-conclude}
In this paper, we studied
and numerically verified
the use of Lagrange finite element
spaces for the two-dimensional Maxwell eigenvalue
problem.  Using and extending
the analysis of divergence--free Stokes
pairs, we showed, on certain triangulations,
convergence of the discrete eigenvalues.

While the focus of this paper has been on the
two-dimensional setting, the tools developed here
may apply to three dimensions as well.
 In particular,
smooth, discrete de Rham complexes using Lagrange 
finite element spaces have been constructed in \cite{FGN20,GLN20},
and these results might be applicable to the 3D Maxwell 
eigenvalue problem.
}
\clearpage

\appendix
\section{{Proof of Lemma \ref{lem:SZ}}}
In order to describe the new interpolant, we first remind the reader of the Scott-Zhang interpolant \cite{ScottZhang90}. 
For every $z\in \calV_h$ we define $\phi_z \in \pol_1^c(\mct)$ to be the  hat function $\phi_z(y)=\delta_{yz}$ for all $y \in \calV_h$. Also  for every $z\in \calV_h$,  we identify an arbitrary edge $e_z$  of the mesh that contains $z$ with the only constraint that $e_z$ is a boundary edge if $z$ is a boundary vertex. Then there exists function $\psi_z \in L^\infty(e_z)$ such that
\begin{alignat}{1}
\int_{e_z} \psi_z \phi_y = \delta_{yz}  \qquad y \in \calV_h. \label{phiz}
\end{alignat}
Moreover, 
\begin{equation} \label{phizbound}
\|\psi_z\|_{L^\infty(e_z)} \le \frac{C}{\revj{|e_z|}}.
\end{equation}

The Scott-Zhang interpolant $\tilde{\bI}_h$ is given by: 
\begin{equation}\label{SZorg}
\tilde{\bI}_h \btau(x) = \sum_{z\in \calV_h}  \revj{(\int_{e_z} \psi_z \btau)} \phi_z(x).
\end{equation}

Although the Scott-Zhang interpolant has the approximation properties we need, it might not preserve the tangential trace to be zero. More precisely,  if $\btau \in \bH^{\frac{1}{2}+\delta}(\Omega)\cap \bH_0({\rm rot};\Omega)$
then  $\tilde{\bI}_h \btau \cdot \bt$ might not vanish on edges that touch a corner vertex.  Therefore, we must modify the Scott-Zhang interpolant on such vertices. 

For every corner boundary vertex $z \in  \calV_h^C$ we will consider the two boundary edges, $e_z^1,e_z^2$, that that have $z$ as an endpoint.
We let $\bn_z^i$ be the outward pointing normal to  $e_z^{i}$ and $\bt_z^i$ the tangent vector to $\bn_z^i$  that is rotated 90 degrees counterclockwise.   We then have the existence of  $\psi_z^i \in L^\infty(e_z^i)$ such that  
\begin{alignat}{1}
\int_{e_z^i} \psi_z^i \phi_y =& \delta_{yz}  \qquad y \in \calV_h,   \label{phizi}\\
\|\psi_z^i\|_{L^\infty(e^i_z)} \le & \frac{C}{\revj{|e_z|}}. \label{phizboundi}
\end{alignat}

We can then define the modified Scott-Zhang interpolant as follows: 
\begin{equation}\label{eqn:MSZ}
\bI_h \btau(x) := \sum_{z\in \calV_h\backslash \calV_h^C}\big(\int_{e_z} \psi_z \btau\big)  \phi_z(x)
+ \sum_{z\in \calV_h^C}\bbeta_z(\btau) \phi_z(x),
\end{equation}
where 
\begin{equation*}
\bbeta_z(\btau):= \frac{\bn_z^2}{\bn_z^{2} \cdot \bt_z^{1}} \int_{e_z^{1}} (\btau \cdot \bt_z^{1}) \psi_z^{1}
+  \frac{\bn_z^1}{\bn_z^{1} \cdot \bt_z^{2} } \int_{e_z^{2}} (\btau \cdot \bt_z^{2}) \psi_z^{2}.
\end{equation*}
We now proceed to prove Lemma \ref{lem:SZ} in four steps.\medskip

%\begin{enumerate}

\noindent{\em (i) $\bI_h:\bH^{\frac{1}{2}+\delta}(\Omega)\cap \bH_0({\rm rot};\Omega)\to \bpol^c_1(\mct)\cap \bH_0({\rm rot},\Omega)$}:
If $ \btau \in \bH^{\frac{1}{2}+\delta}(\Omega) \cap \bH_0({\rm rot};\Omega)$, then clearly $\bI_h \btau(z)=0$ for every $z \in \calV_h^C$. Also we have   
$\bI_h \btau(z) \cdot \bt_z=0$ for all  $z \in \calV_h\backslash \calV_h^C$ where $\bt_z$ is tangent to $e_z$. Thus we have that $\bI_h \btau \cdot \bt=0$ on $\partial \Omega$.\medskip

\noindent{\em (ii) $\bI_h$ is a projection}:
In order to show it is a projection we  need to show that $\bI_h \btau(z)=\btau(z)$ for all $z \in  \calV_h$ and $\btau \in \bpol_1^c(\mct)$. To this end,  let $\btau \in \bpol_1^c(\mct)$.  If $z \in \calV_h\backslash \calV_h^C$ then $\bI_h \btau(z)=  \int_{e_z} \psi_z \btau$. However,  
$\int_{e_z} \psi_z \btau =\btau(z)$ by \eqref{phiz}, since $\btau|_{e_z}=\btau(z) \phi_z+ \btau(y) \phi_y$ where $y$ is the other end point of $e_z$. On the other hand, if $z \in \calV_h^C$ then  $\bI_h \btau(z)=  \bbeta_z(\btau)$.   Then, we have $\bbeta_z(\btau) \cdot \bt_z^i= \int_{e_z^{i}} (\btau \cdot \bt_z^{i}) \psi_z^{i}$. Using \eqref{phiz} we have that $\int_{e_z^{i}} (\btau \cdot \bt_z^{i}) \psi_z^{i}=\btau(z) \cdot \bt_z^{i}$.  Thus we have shown that $\bI_h \btau(z) \cdot \bt_z^i= \btau(z) \cdot \bt_z^{i}$ for $i=1,2$ and thus $\bI_h \btau(z)=\btau(z)$. \medskip

\noindent{\em (iii) Stability estimate}:
We derive a stability estimate following the arguments of \cite{ScottZhang90,PatCiarlet13}. First we note that by an inverse estimate we have
\begin{equation*}
|\bI_h \btau|_{H^{\frac{1}{2}+\delta}(T)} \le C h_T^{-\frac{1}{2}-\delta} \|\bI_h \btau\|_{L^2(T)}.
\end{equation*}
Thus, we only need to bound the $L^2$-norm.  To do this, we first note the trace inequality (cf.~\cite[Proposition 3.1]{PatCiarlet13})
\[
\|\btau\|_{L^1(e_z)}\le C\big(\|\btau\|_{L^2(T)} + h_{T}^{\frac{1}{2}+\delta}|\btau|_{H^{\frac{1}{2}+\delta}(T)}\big),
\]
for $T\in \mct$ with $e_z\subset \p T$. \revj{We remind the reader that the number of corner points $\calV_h^C$ is finite and independent of the mesh $\mct$ and hence  $M:=\max_{z\in \calV_h^C} \frac{1}{|\bn_z^{1} \cdot \bt_z^{2}|}$ is finite}. Thus, using \eqref{phizbound} and \eqref{phizboundi}, we have 
\begin{align*}
\|\bI_h \btau\|_{L^2(T)}
&\le  \mathop{\sum_{z\in \calV_h\backslash \calV_h^C}}_{z\in \bar T} \|\phi_z\|_{L^2(T)} \|\psi_z\|_{L^\infty(e_z)} \|\btau\|_{L^1(e_z)}\\
&\qquad+ M \mathop{\sum_{z\in  \calV_h^C}}_{z\in \bar T}  \|\phi_z\|_{L^2(T)} \big(\|\psi_z^{1}\|_{L^\infty(e_z^{1})} \|\btau\|_{L^1(e_z^{1})}
+\|\psi_z^{2}\|_{L^\infty(e_z^{2})} \|\btau\|_{L^1(e_z^{2})}\big)\\
&\le C(1+M)\big( \|\btau\|_{L^2(\omega(T))}+h_T^{\frac{1}{2}+\delta}|\btau|_{H^{\frac{1}{2}+\delta}(\omega(T))}\big),
\end{align*}
\revj{where we used that $\|\phi_z\|_{L^2(T)} \le C h_T$}. Hence,  combing the above results we obtain  
\begin{equation}\label{auxapp1}
  h_T^{\frac{1}{2}+\delta} |\bI_h \btau|_{H^{\frac{1}{2}+\delta}(T)}  + \|\bI_h \btau\|_{L^2(T)} \le  C(1+M)\big( \|\btau\|_{L^2(\omega(T))}+h_T^{\frac{1}{2}+\delta}|\btau|_{H^{\frac{1}{2}+\delta}(\omega(T))}\big).
\end{equation}

\noindent{\em (iv)  Estimate \eqref{eqn:SZmod}}:
Let $\bw=\frac{1}{|\omega(T)|} \int_{\omega(T)} \btau$, we have that
\begin{equation}\label{eqaux}
\|\btau-\bw\|_{L^2(\omega(T))} \le C h_T^{\frac{1}{2}+ \delta} |\btau|_{H^{\frac{1}{2}+\delta}(\omega(T))}.
\end{equation}
The estimate \eqref{eqaux} for $\delta = \frac{1}{2}$ is shown in \cite[Section 4]{ScottZhang90}.
\revj{The estimate \eqref{eqaux} for $\delta \in (0,\frac{1}{2})$ can be found for example in \cite[Proposition 2.1]{drelichman2017improved} and \cite[Lemma 3.1]{BellidoMora-Coral}. See also \cite[Lemma 5.6]{ErnGuermond}.}

Because $\bw$ is constant we have that $\bI_h \bw=\bw$ on $T$, and thus using  \eqref{auxapp1} and \eqref{eqaux} we obtain
\begin{alignat*}{1}
\|\bI_h \btau-\btau\|_{L^2(T)}=&\|\bI_h (\btau-\bw)+ (\bw-\btau)\|_{L^2(T)}  \\
\le  & C(1+M)\big( \|\btau-\bw\|_{L^2(\omega(T))}+h_T^{\frac{1}{2}+\delta}|\btau|_{H^{\frac{1}{2}+\delta}(\omega(T))}\big) \\
\le  & C(1+M)\big( h_T^{\frac{1}{2}+\delta}|\btau|_{H^{\frac{1}{2}+\delta}(\omega(T))}\big). 
\end{alignat*}
Similarly, $|\bI_h \btau|_{H^{\frac{1}{2}+\delta}(T)}=|\bI_h \btau-\bw|_{H^{\frac{1}{2}+\delta}(T)}=|\bI_h (\btau-\bw)|_{H^{\frac{1}{2}+\delta}(T)}$ and one can use  \eqref{auxapp1} and \eqref{eqaux} again to get 
$|\bI_h \btau|_{H^{\frac{1}{2}+\delta}(T)} \le  C(1+M) |\btau|_{H^{1/2+\delta}(\omega(T))}$.
This completes the proof of Lemma \ref{lem:SZ}.

\end{document}